\newtheorem{theorem}{Theorem}[section]
\newtheorem{lemma}[theorem]{Lemma}
\newtheorem{proposition}[theorem]{Proposition}
\newtheorem{remark}[theorem]{Remark}
\newtheorem{OpenP}[theorem]{Open problem}
\def\curl{\operatorname{curl}}
\def\div{\operatorname{div}}
\providecommand{\R}{\mathbb{R}}
\providecommand{\N}{\mathbb{N}}
\providecommand{\varepsilon}{}
\renewcommand{\leq}{\leqslant}
\renewcommand{\geq}{\geqslant}
\numberwithin{equation}{section}
\begin{document}
\date{\today}
\title[Remote trajectory tracking of rigid bodies in a $2$D perfect incompressible fluid]{Remote trajectory tracking of rigid bodies \\ immersed in a $2$D perfect incompressible  fluid}
%
% Authors
%
\author{Olivier Glass} 
\address{CEREMADE, UMR CNRS 7534, Universit\'e Paris-Dauphine, PSL Research University,
Place du Mar\'echal de Lattre de Tassigny, 75775 Paris Cedex 16, France} 
\author{J\'ozsef J. Kolumb\'an} 
\address{Institut f\"ur Mathematik, Universit\"at Leipzig, D-04109, Leipzig, Germany} 
\author{Franck Sueur}
\address{Institut de Math\'ematiques de Bordeaux, UMR CNRS 5251,
Universit\'e de Bordeaux, 351 cours de la Lib\'eration, F33405 Talence Cedex, France
$\&$ Institut  Universitaire de France}
%
% Abstract 
%
\begin{abstract}
We consider the motion of several rigid bodies immersed in a two-dimensional incompressible perfect fluid. The motion of the rigid bodies is given by the Newton laws with forces due to the fluid pressure and the fluid motion is described by the incompressible Euler equations. 
Our analysis covers the case where the circulations of the fluid velocity around the bodies are nonzero and where the fluid vorticity is bounded. 
The whole system occupies a bounded simply connected domain with an external fixed boundary which is impermeable except on an open non-empty part where one allows some fluid to go in and out the domain by controlling the normal velocity and the entering vorticity. 
We prove that it is possible to exactly achieve any non-colliding smooth motion of the rigid bodies by the remote action of a controlled normal velocity on the outer boundary which takes the form of state-feedback, with zero entering vorticity. 
This extends the result of 
 (Glass, O., Kolumb\'an, J. J., Sueur, F. (2017). External boundary control of the motion of a rigid body immersed in a perfect two-dimensional fluid. Analysis \& PDE)   
where the exact controllability of a single rigid body immersed in a 2D irrotational perfect incompressible fluid 
 from an initial position and velocity to a final position and velocity was investigated. 
The proof relies on a nonlinear method to solve 
linear perturbations of nonlinear equations associated  with a quadratic operator having a regular non-trivial zero.
Here this method is applied to a quadratic equation satisfied by a class of boundary controls, which is 
obtained by extending  the reformulation of the Newton equations   performed in the uncontrolled case in (Glass, O., Lacave, C., Munnier, A., Sueur, F. (2019). Dynamics of rigid bodies in a two dimensional incompressible perfect fluid. Journal of Differential Equations, 267(6), 3561--3577) to the case where a control acts on the external boundary. 
\end{abstract}
%
%                                   ====================================================
%
\maketitle
%
%
%\tableofcontents
%
%
%
%
%
%
%%%%%%%%%%%%%%%%%%%%%%%%%%%%%%%%%%%%%%%%%%%%%%%%%%%%%%%%%%%%%%%%%%%%%%%%%%%%%%%%%%%%%%%%%%%%%%%%%%%%%%%%%%%%%%%%%%%%%%%%
%
%
%
%
%
\section{Presentation of the model: the ``Euler+rigid bodies'' system}
The model that we consider in this paper describes the motion of rigid bodies immersed in a two-dimensional perfect incompressible fluid. The whole system occupies a bounded connected open subset  $\Omega$ of $\R^2$, which to simplify we will also consider to be simply connected (though this is by no means essential to the analysis).
The rigid bodies occupy at the initial time disjoint non-empty regular connected and simply connected compact sets $\mathcal S_{ \kappa,0} \subset \Omega$, with $\kappa $ in $ \{1,2, \ldots , N \}$.
We assume for simplicity that none of these sets is a disk, since this particular case requires a special treatment. 
 
The rigid motion of the solid $\kappa$ is described at each moment by the rotation matrix
\begin{equation*}
R(\theta_\kappa (t)) := 
\begin{bmatrix}
\cos \theta_\kappa (t) & - \sin \theta_\kappa (t) \\
\sin \theta_\kappa (t) & \cos \theta_\kappa (t)
\end{bmatrix}, \quad \theta_\kappa (t) \in \R,
\end{equation*}
and by the position $h_\kappa(t) $ in $ \mathbb R^2$ of its center of mass.  
The domain of the solid $\kappa$ at every time $t>0$ is therefore 
$$\mathcal S_\kappa(t) :=R(\theta_\kappa(t))(\mathcal S_{ \kappa,0}-h_{\kappa}(0)) +h_\kappa(t). $$ 
We will denote by $m_\kappa>0$ and by $\mathcal{J}_\kappa>0$  respectively the mass and the moment of inertia of the body indexed by $\kappa$. 
The domain occupied by the fluid is correspondingly
\begin{equation*}
\mathcal F_0 := \Omega \setminus \bigcup_{ \kappa \in \{1,2, \ldots , N \} } \, {\mathcal S}_{\kappa,0} ,  \ \text{ at } t=0 , \ \text{ and } 
\mathcal F(t) := \Omega\setminus \bigcup_{ \kappa \in \{1,2, \ldots , N \} } \, {\mathcal S_\kappa}(t)  \ \text{ at } t>0. 
\end{equation*}
We will denote by $u=(u_1,u_2)^t$ (the exponent $t$ denotes the transpose of the vector)  and by $\pi$ the velocity and pressure fields in the fluid, respectively. Without loss of generality, the fluid is supposed to be homogeneous of density $1$, to simplify the notations. 
The fluid dynamics is given by the incompressible Euler equations:
\begin{gather}
\label{E1}
\frac{\partial u}{\partial t}+(u\cdot\nabla)u +\nabla \pi=0\quad\text{in }\mathcal F(t), \quad \text{for } t >0, \\
\label{E2}
\div u=0 \quad \text{in }\mathcal F(t), \quad \text{for } t >0 . 
\end{gather}
The solids dynamics is given by Newton's balance law for linear and angular momenta:  given $ \kappa $ in $ \{1,2, \ldots , N \}$, 
\begin{gather}
\label{EqTrans}
m_\kappa h_\kappa'' (t)=\int_{\partial\mathcal S_\kappa(t)}\pi n\, {\rm d}s,  \quad \text{for } t >0,  \\
\label{EqRot}
\mathcal J_{\kappa} \theta_\kappa'' (t)
=\int_{\partial\mathcal S_\kappa(t)}(x-h_\kappa(t))^\perp\cdot \pi n\, {\rm d}s,  \quad \text{for } t >0. 
\end{gather}
When $x=(x_1,x_2)^t$ the notation $x^\perp $ stands for $x^\perp =( -x_2 , x_1 )^t$,  $n$ denotes the unit normal vector on $\partial \mathcal{S}_\kappa(t)$ which points outside of the fluid, so that $n = \tau^\perp$, where $\tau$ is the unit counterclockwise tangential vector on $\partial \mathcal S_\kappa (t)$. 
We assume the rigid bodies to be impermeable so that we prescribe on the interface: for every $ \kappa $ in $ \{1,2, \ldots , N \}$,
\begin{equation} \label{souslab}
u \cdot n = \big( \theta'_\kappa (\cdot-h_\kappa)^\perp + h'_\kappa \big) \cdot n  \quad  \text{on }\partial\mathcal S_\kappa(t),  \quad \text{for } t >0 .
\end{equation}
We will use the notations $\mathbf{q}_{\kappa}$ and $\mathbf{q}'_{\kappa}$ for vectors in $\R^{3}$ gathering both the linear and angular parts of the position and velocity:
$$\mathbf{q}_{\kappa} := (h_{\kappa}^t,\theta_{\kappa})^t \quad \text{ and } \quad \mathbf{q}_{\kappa}':=(h^{\prime\, t}_{\kappa},\theta'_{\kappa})^t .$$
The vectors $\mathbf{q}_{\kappa} $ and $\mathbf{q}'_{\kappa} $ are next concatenated into vectors of length $3N$:
$$q = (\mathbf{q}_{1}^t , \ldots , \mathbf{q}_{N}^t)^t \quad \text{ and } \quad{q}' = (\mathbf{q}_{1}^{\prime\, t} , \ldots , \mathbf{q}^{\prime\, t}_{N} )^t, $$
whose entries are relabeled respectively $q_k$ and $q'_k$ with $k$ ranging over $\{1,\ldots,3N\}$. Hence we have also:
$$ q = (q_1, q_2,\ldots,q_{3N})^t \quad \text{ and } \quad {q'} = (q'_1,q'_2,\ldots,q'_{3N})^t.$$
Consequently, $k $ in $ \{1,\dots,3N\}$ denotes the datum of both a solid number and a coordinate in $\{1,2,3\}$ so that $q_{k}$ and $q'_{k}$ denote respectively the coordinate of the position and of the velocity of a given solid. 
More precisely, for all $k $ in $ \{1,\dots,3N\}$, we denote by $\llbracket k \rrbracket$ the quotient of the Euclidean division of $k-1$ by $3$,
$[k] = \llbracket k \rrbracket +1 $ in $ \{1,\dots,N\}$ denotes the number of the solid and $(k) := k - 3\llbracket k \rrbracket $ in $ \{1,2,3\}$ the considered coordinate. 

Throughout this paper we will not consider collisions, so we introduce the set of body positions without collision:
\[
\mathcal{Q} := \{q \in \R^{3N} \ :\ \min_{\kappa \neq \nu}{\rm d}(\mathcal{S}_{\kappa}(q), \Omega^c \cup \mathcal{S}_{\nu}(q)) > 0\},
\]
where $d$ is the Euclidean distance. For $\delta>0$, we also introduce
\[
\mathcal{Q}_\delta := \{q \in \R^{3N} \ :\ \min_{\kappa \neq \nu}{\rm d}(\mathcal{S}_{\kappa}(q), \Omega^c \cup \mathcal{S}_{\nu}(q)) \geq \delta\}.
\]

The fluid domain is completely described by $q$ in $ \mathcal{Q}$ and we will therefore make use of the following abuse of notation: $\mathcal{F}(t)= \mathcal{F}(q(t))$.

%
%
%
%
%
%
%%%%%%%%%%%%%%%%%%%%%%%%%%%%%%%%%%%%%%%%%%%%%%%%%%%%%%%%%%%%%%%%%%%%%%%%%%%%%%%%%%%%%%%%%%%%%%%%%%%%%%%%%%%%%%%%%%%%%%%%
%
%
%
%
%
%
%
%
\section{Boundary conditions on the external boundary}

Our purpose in this paper is to investigate the possibility of steering the rigid bodies according to any reasonable (smooth, non colliding) given motion
by means of a boundary control acting on a part of the external boundary, while on the rest of the boundary we consider the usual impermeability condition. 
More precisely we consider  $\Sigma$ a nonempty, open part of the outer boundary $\partial \Omega$ and the following 
 boundary conditions introduced by Yudovich in \cite{Yudo}. To begin with, let $\mathcal C$ denote the space
 \begin{equation}
 \label{def-Cpasb}
 \mathcal C := \left\{  g \in    C_{0}^{\infty}( \Sigma  ;\mathbb{R}) \  \text{ such that }  \,  \int_\Sigma g \, {\rm d}s=0 \right\} .
 \end{equation}
For $T>0$, we consider ${g}$ in $ C^{\infty}([0,T]; \mathcal C )$ and the boundary condition on the normal trace of the outer boundary
\begin{equation} \label{Yudo1}
u(t,x)\cdot n(x)=g(t,x)\ \text{on}\ [0,T]\times\Sigma \quad \text{ and } \quad u(t,x)\cdot n(x) =0\ \text{on}\ [0,T] \times (\partial \Omega \setminus \Sigma) .
\end{equation}
Above, as for the solids boundaries, $n$ denotes the unit normal vector pointing outside the fluid, so that $n = \tau^\perp$, where here $\tau$ denotes the unit clockwise tangential vector on $\partial \Omega$.
The condition on the zero flux of $g$ through $\Sigma$ is necessary due to the incompressibility of the fluid. As noticed by Yudovich (Ibid.), this is not a sufficient boundary condition to determine the system. To complete it, we consider the set
$$\Sigma^- := \{(t,x)\in [0,T]\times\Sigma \  \text{ such that }  \ g(t,x)<0\}, $$
of points of $[0,T] \times \Sigma$ where the fluid velocity field points inside $\Omega$. 
Then the other part of the boundary condition consists in prescribing the entering vorticity, that is the vorticity 
$$\omega := \curl u =\partial_1 u_2- \partial_2 u_1\text{ on }\Sigma^-. $$ This is natural since the fluid vorticity satisfies the transport equation:
\begin{equation} \label{transp-vort}
\frac{\partial \omega }{\partial t}+(u\cdot\nabla) \omega =0 , \quad x \in \mathcal{F}(q(t)) .
\end{equation}
For simplicity we will actually prescribe a null control in vorticity, that is
\begin{equation} \label{Yudo2}
\omega (t,x)=0\ \text{on}\ \Sigma^{-} .
\end{equation}

Let us insist on the fact that the control considered here is a remote control in the sense that it is located on the external boundary, not on the moving rigid bodies. For this alternative issue of rigid or deformable bodies equipped with thrusters or locomotion devices we refer to the papers \cite{GR,LR1,LR2,Munnier:2008ab}. \par
Since the fluid occupies a multiply-connected domain, the circulations of the fluid velocity around the rigid bodies $\mathcal S_{\kappa}$:
\begin{equation}\label{eq:circs}
  \int_{\partial\mathcal S_\kappa (t)} u(t)\cdot\tau \, {\rm d}s = \gamma_\kappa , \quad  \text{ for all } \ \kappa  \in \{1,2, \ldots , N \},
\end{equation}
will play an important role.
Let us recall that, for each $\kappa$, the circulation $\gamma_\kappa$ remains constant over time according to Kelvin's theorem. We will use the notation 
$$
\gamma:= (\gamma_\kappa )_{\kappa=1,\ldots,N} .
$$ 
To achieve our goal, we will consider a control {\it in feedback form}, depending on the state of the  ``fluid+rigid bodies'' system. 
More precisely we will prescribe a normal velocity $g$ on $[0,T] \times \Sigma$ of the form
\begin{equation} \label{Feedback}
g(t)  =  \mathscr C  (q(t),q'(t),q''(t),\gamma,\omega(t,\cdot)),
\end{equation}
where $\mathscr C$ is a Lipschitz function on 
$$\cup_{q\in\mathcal Q_\delta} \{ q \} \times \R^{3N} \times \R^{3N} \times \R^{N} \times L^\infty ( \mathcal{F}(q); \R) ,$$
 for any  $\delta >0$.  
Furthermore we will only need a finite dimensional space of controls so that  $\mathscr C$ can be taken with values in a finite dimensional subspace of the space $\mathcal C$ defined in \eqref{def-Cpasb}.
%
%
%
%
%
%
%%%%%%%%%%%%%%%%%%%%%%%%%%%%%%%%%%%%%%%%%%%%%%%%%%%%%%%%%%%%%%%%%%%%%%%%%%%%%%%%%%%%%%%%%%%%%%%%%%%%%%%%%%%%%%%%%%%%%%%%
%
%
%
%
%
%
%
%
\section{Main results: Trajectory tracking by a remote control}
\label{sec-mr}
The problem that we raise in this paper is the trajectory tracking by means of the remote control described in the previous section. Precisely, the question is: is it possible to exactly achieve any non-colliding smooth motion of the rigid bodies by the remote action described above? The purpose is schematically described in Figure~\ref{Fig:1}. Let us now explain how we positively answer to this question. \par

\begin{figure}[ht]
\begin{center}
	\scalebox{1.5}{\input{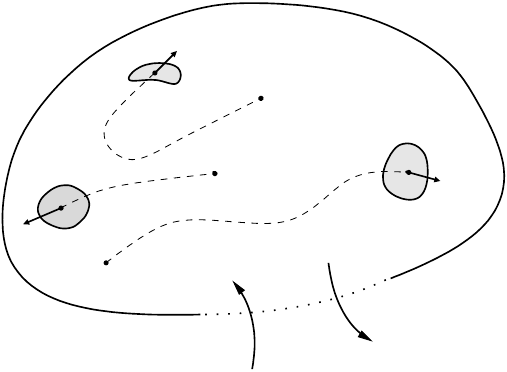_t}}
\end{center}
\caption{Controlled trajectories for solids inside $\Omega$}
\label{Fig:1}
\end{figure}
To begin with, let us be more specific on the functional setting.
Following Yudovich \cite{Yudo2}, we consider the case where the initial fluid vorticity is bounded.
Then the natural regularity for a  fluid velocity field associated with a bounded vorticity is the log-Lipschitz regularity. 
Precisely, for $T>0$ and given the solids trajectories $q$, we will consider the space $LL(T)$ of uniformly in time log-Lipschitz in space vector fields, defined via its norm
$$
\| f \|_{\text{LL(T)}} := \|f\|_{L^\infty(\cup_{t \in (0,T)}  \, {t}  \times \mathcal{F} (q(t) ))}
+ \sup_{t \in [0,T]} \sup_{x\neq y} \frac{|f(t,x)-f(t,y)|}{|x-y|(1+\text{ln}^{-}(|x-y|))}.
$$
Moreover, we will work with vorticities belonging to balls in $L^\infty$: for any $q$ in $\mathcal Q_\delta$ and $r_\omega>0$, we consider the complete metric space 
\begin{align} \label{lesboules}
\mathscr{B}(q,r_\omega):=\overline B_{L^\infty(\mathcal{F}(q))} (0,r_\omega) 
\text{ endowed with the } L^3(\mathcal{F}(q)) \text{ distance}. 
\end{align}
Before we state the main result of this paper, let us give two words of caution.
\begin{itemize}
\item 
Below we will use the letter $q$ as a variable for the positions of the rigid bodies, as well as a trajectory of the rigid bodies. Readers should not be confused. 
\item 
Let us also recall that, in incompressible fluid mechanics, including in the presence of moving rigid bodies, the pressure field $ \pi$  can be interpreted as a Lagrange multiplier associated with the divergence-free constraint;
as a result it can be ignored when we speak of a solution of the problem.
Consequently in the sequel we will say that  $(q,u)$ satisfies, for $t$ in $[0,T]$, 
the Euler equations \eqref{E1}-\eqref{E2} and 
the Newton equations \eqref{EqTrans}-\eqref{EqRot} for  $ \kappa $ in $ \{1,2, \ldots , N \}$,
without referring to the associated pressure.
In the case where the vorticity is bounded, the controlled solutions which we will consider below correspond to a pressure field in  $L^{\infty}(0,T; H^{1}({\mathcal F}(t)))$  
which is unique up to a function depending only on time which does not change the value of the terms involving the pressure  \eqref{E1}, \eqref{EqTrans} and \eqref{EqRot}.
This regularity result can be obtained as in the uncontrolled case, see  \cite[Corollary 2]{GS-Uniq}. 
In particular this gives a sense to the right hand sides of  the Newton equations \eqref{EqTrans}-\eqref{EqRot} for  $\kappa$ in $\{1,2, \ldots , N \}$.
\end{itemize}
Our main result is twofold. In a first part, we prove that there exists a feedback control $\mathscr C$ as in \eqref{Feedback} such that, for any target trajectory $q$ and any compatible initial conditions, there exists a solution of the closed-loop system with this control $\mathscr C$, in which the solids follow the trajectory $q$ exactly. The compatible initial conditions can be described solely by the initial vorticity $\omega_{0}$ and the circulations $\gamma$, the rest being imposed by $q(0)$, $q'(0)$ and ${\mathscr C}$. The second part of our statement establishes a partial uniqueness result: any (weak) solution of the above-closed loop system does satisfy that the solids follow the trajectory $q$ exactly.
The central idea is that the control imposes the correct acceleration. \par
The exact statement is as follows.
\begin{theorem} \label{main}  
For any $\delta >0$, there is  a  finite dimensional subspace $\mathcal E$ of $\mathcal C$ such that the following holds.
Let $T>0$, $r_\omega>0$ and $\mathscr K$ be a compact subset of  $  \R^{3N}  \times   \R^{3N}   \times   \R^{N}$. Then there exists a control law
$$
\mathscr C \in \text{Lip}( \cup_{q\in \mathcal Q_\delta} \{  q\} \times \mathscr K  \times \mathscr{B}(q,r_\omega); \mathcal E),
$$
such that the two following results hold true  for  any given trajectory  $q$ in $C^{2} ([0,T] ; \mathcal Q_\delta)$
and for any $\gamma$ in $\R^{N}$ such that for any  $t $ in $ [0,T]$,  $(q'(t),q''(t),\gamma)$ belongs to $\mathscr{K}$.  \par
\ \par
\begin{enumerate}[1)]
\item 
For any initial vorticity  $\omega_0$ in $L^\infty ( \mathcal{F} (q (0)))$ such that $\|\omega_0\|_{L^\infty(\mathcal{F}_0)}\leq r_\omega$, 
there exists a velocity field  $u $ in $   LL(T) \cap C^0([0,T];W^{1,p}(\mathcal{F}(t)))$, for all $p\in [1,+\infty)$, with  $\curl u (0,\cdot) = \omega_0$ and for any
$t$ in $[0,T]$, $\curl u (t,\cdot) $ in $ \mathscr{B}(q(t),r_\omega)$, 
such that $(q,u)$ satisfies for all $t$ in $[0,T]$:
the Euler equations \eqref{E1}-\eqref{E2},
the Newton equations \eqref{EqTrans}-\eqref{EqRot} for $\kappa$ in $ \{1,2, \ldots , N \}$,  
the interface condition \eqref{souslab}, 
the boundary condition \eqref{Yudo1} on the normal velocity with 
\begin{equation} \label{pasmixe}
g(t)= \mathscr C (q(t),q'(t),q''(t), \gamma,\curl u(t,\cdot)),
\end{equation}
and the boundary condition \eqref{Yudo2} on the entering vorticity
and the circulation conditions \eqref{eq:circs}.  \par 
\par \ \par 

\item  Let $$(\tilde{q},\tilde{u}) \in C^{2} ([0,T] ; \mathcal Q_\delta ) \times [LL(T)\cap C^0([0,T];W^{1,p}(\mathcal{F}(t)))], \text{ for all } p\in [1,+\infty),$$ 
 and $\tilde{\gamma}$ in $\R^{N}$ such that 
for any $t$ in $ [0,T]$,  $(\tilde{q}'(t),\tilde{q}''(t),\tilde{\gamma})$ belongs to $\mathscr{K}$ and $\curl \tilde{u} (t,\cdot) $ is in $ \mathscr{B}(\tilde{q}(t),r_\omega)$.
Assume that $(\tilde{q},\tilde{u})$ satisfies: 
the Euler equations \eqref{E1}-\eqref{E2}, 
the Newton equations \eqref{EqTrans}-\eqref{EqRot} for $\kappa $ in $ \{1,2, \ldots , N \}$,
the interface condition \eqref{souslab},
the boundary condition \eqref{Yudo1} on the normal velocity with  
\begin{equation} \label{mixe}
g(t)= \mathscr C (\tilde{q}(t),\tilde{q}'(t), q''(t),\tilde{\gamma},\curl \tilde{u}(t,\cdot)) ,
\end{equation}
the boundary condition \eqref{Yudo2} on the entering vorticity,
the circulation conditions \eqref{eq:circs} (with $\tilde{\gamma}$ in place of ${\gamma}$)
and the initial conditions  $\tilde{q}(0)= q(0)$ and  $ \tilde{q}'(0) = q'(0) $ on the initial positions and velocities of the rigid bodies. 
Then  $\tilde{q} =q$ on $[0,T]$. 
\end{enumerate}
\end{theorem}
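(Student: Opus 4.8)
The plan is to reduce the control-design problem to a finite-dimensional algebraic problem governing the solids' acceleration, following the philosophy announced in the abstract. First I would recall from the uncontrolled theory of Glass--Lacave--Munnier--Sueur that the Newton equations \eqref{EqTrans}--\eqref{EqRot}, after computing the pressure via the elliptic problem it solves, can be reformulated as a second-order ODE for $q$ of the form $\mathcal{M}(q)\, q'' = \langle \Gamma(q), q' , q' \rangle + F(q, q', \gamma, \omega)$, where $\mathcal{M}(q)$ is a symmetric positive-definite ``added-mass + genuine mass'' matrix and the right-hand side collects the gyroscopic/Kirchhoff-type terms, the circulation-induced Kutta--Joukowski forces, and the vorticity contribution. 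The point is that when a control $g$ acts on $\Sigma$, the pressure decomposition picks up an extra affine term linear in the control data (the ``control potential'' $\Phi_g$ solving a Neumann problem with boundary data $g$ on $\Sigma$, $0$ elsewhere, and appropriate conditions on the solid boundaries), so the reformulated Newton equation becomes $\mathcal{M}(q)\, q'' = \langle \Gamma(q), q', q'\rangle + F(q, q', \gamma, \omega) + B(q)[\text{control data}] + (\text{quadratic-in-control terms})$. This is the quadratic equation in the boundary control alluded to in the abstract: prescribing a target trajectory $q$ turns the left side into the known quantity $\mathcal{M}(q)\, q''$, and one must solve for control data producing exactly the required force.

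Next I would invoke the abstract ``nonlinear method to solve linear perturbations of nonlinear equations associated with a quadratic operator having a regular non-trivial zero'' — this is presumably a lemma proved later in the paper, generalizing the technique from the earlier single-body Analysis \& PDE paper. Concretely: the map from control data to the force it exerts on the solids is quadratic; one exhibits a particular nonzero control at which the differential of this quadratic map is onto $\R^{3N}$ (a ``regular non-trivial zero'' after the appropriate normalization), and then an implicit-function-type argument in the finite-dimensional space $\mathcal{E}\subset\mathcal{C}$ produces, for each prescribed value of the required force (hence for each $(q, q', q'', \gamma, \omega)$ in the relevant compact range), a control $g = \mathscr{C}(q, q', q'', \gamma, \omega)$ realizing it, with Lipschitz dependence on the data. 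The finite-dimensionality of $\mathcal{E}$ and its independence of $T$, $r_\omega$, $\mathscr{K}$ (only $\delta$ matters) comes from choosing $\mathcal{E}$ large enough that the relevant differential is surjective uniformly over $q \in \mathcal{Q}_\delta$, a compactness-and-continuity argument on the parameter $q$.

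For part (1), once $\mathscr{C}$ is constructed, I would set up the closed-loop system: given $q \in C^2([0,T];\mathcal{Q}_\delta)$ and $\omega_0$, define $g(t)$ by \eqref{pasmixe} — note this is well-posed because the right side depends on $q(t), q'(t), q''(t), \gamma$ (all known) and on $\curl u(t,\cdot)$, so it is a genuine feedback only through the vorticity. Then one solves the Yudovich-type initial-boundary value problem for $(u, \omega)$ on the prescribed moving domain $\mathcal{F}(q(t))$ with this $g$: this is a transport equation for $\omega$ coupled to the div-curl elliptic reconstruction of $u$ from $\omega$, the circulations $\gamma$, the solid velocities $q'$, and the boundary flux $g$. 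Existence and uniqueness of such a log-Lipschitz solution with bounded vorticity staying in $\mathscr{B}(q(t), r_\omega)$ is the Yudovich/DiPerna--Lions-type well-posedness on a moving domain with inflow — I would cite or adapt the corresponding results (the excerpt points to \cite{GS-Uniq} for the pressure regularity, and the entering vorticity being null keeps $\|\omega(t)\|_{L^\infty} \le \|\omega_0\|_{L^\infty} \le r_\omega$ by the maximum principle for transport). By construction of $\mathscr{C}$, the force exerted on the solids equals $\mathcal{M}(q)q'' - \langle\Gamma,q',q'\rangle - F$, so the Newton equations \eqref{EqTrans}--\eqref{EqRot} hold identically along the prescribed $q$; hence $(q,u)$ is a solution of the closed-loop system.

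For part (2), the partial uniqueness, the idea ``the control imposes the correct acceleration'' is made rigorous as follows: given any weak solution $(\tilde q, \tilde u, \tilde\gamma)$ of the closed-loop system with $\tilde q(0) = q(0)$, $\tilde q'(0) = q'(0)$, plug its own data into the reformulated Newton equation. Because $\mathscr{C}$ was designed so that the force it produces is exactly $\mathcal{M}(\tilde q)\tilde q'' - \langle\Gamma(\tilde q),\tilde q',\tilde q'\rangle - F(\tilde q, \tilde q', \tilde\gamma, \curl\tilde u)$ — here one must check the subtle point that in \eqref{mixe} the feedback is evaluated at $(\tilde q, \tilde q', q'', \tilde\gamma, \curl\tilde u)$, i.e. with the \emph{target} $q''$, not $\tilde q''$ — the actual Newton law forces $\mathcal{M}(\tilde q)\tilde q'' = \mathcal{M}(\tilde q) q''$ pointwise wherever $\tilde q$ ranges in $\mathcal{Q}_\delta$, whence $\tilde q'' = q''$ as long as $\mathcal{M}(\tilde q)$ is invertible (which it is, being positive definite). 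Combined with matching initial position and velocity, integrating twice gives $\tilde q = q$ on $[0,T]$. I expect the main obstacle to be the construction step: producing a finite-dimensional space $\mathcal{E}$ and a single Lipschitz feedback $\mathscr{C}$ that inverts the quadratic force map \emph{uniformly} over all $q \in \mathcal{Q}_\delta$ and all data in the (unbounded in $q$ but compact in the other variables) parameter set, while handling the quadratic (rather than linear) dependence on the control — this is where the ``quadratic operator with a regular non-trivial zero'' machinery does the real work, and where the genuine novelty over the single-body, irrotational case in the earlier paper lies, since one now needs the construction to be robust to the vorticity term $F(\cdot,\cdot,\cdot,\omega)$ and to several solids simultaneously.
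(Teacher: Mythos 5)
Your skeleton coincides with the paper's strategy (reformulation of the Newton equations as a quadratic equation for the boundary control, inversion via a quadratic operator with a regular non-trivial zero, a closed-loop existence argument, and uniqueness by identifying the acceleration through the added-mass structure \eqref{ga555}), but there is a concrete gap in the reformulation step on which everything else rests. The control potential $\alpha=\mathcal A[q,g]$ enters the pressure force not only algebraically but also through $\int_{\mathcal F(q)}\partial_t u_c\cdot u^\ast\,{\rm d}x$, which produces the term $\frac{d}{dt}\bigl(\int_{\partial\mathcal S_\kappa(q)}\alpha\,\partial_n\boldsymbol\varphi_\kappa\,{\rm d}s\bigr)$ in \eqref{ga1}; hence the force exerted on the solids depends on $\partial_t g$ as well as on $g$. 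As you state it, ``solve for the control data producing exactly the required force'' is therefore not a pointwise-in-time algebraic problem, and neither the quadratic-zero inversion nor the feedback form $g(t)=\mathscr C(q,q',q'',\gamma,\omega(t,\cdot))$ applies directly. The paper eliminates this term by restricting the controls to the set $\mathcal C_b(q)$ of \eqref{DefCb}, and the entire finite-dimensional construction must then be carried out inside this constraint (this is the role of the coefficients $\lambda^{i,j}(q)$ in Section~\ref{sec:control}), which your proposal never addresses.

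The second substantive omission is that the existence of a non-trivial zero $\overline X_q$ of the finite-dimensional quadratic map with a right-invertible differential, uniformly and Lipschitz in $q\in\mathcal Q_\delta$, is not obtainable by a generic ``choose $\mathcal E$ large enough plus compactness'' argument: the paper builds explicit controls whose potentials have concentrated, nearly mutually orthogonal gradients on the solid boundaries (conformal map and truncated Laurent series, Lemma~\ref{LemBase}), transfers them to controls supported on $\Sigma$ by Runge-type rational approximation (Lemma~\ref{LemmeEta}), and uses the conical-hull property of $(n,(x-h_\kappa)^\perp\cdot n)$ --- this is where the hypothesis that no solid is a disk enters --- together with Carath\'eodory's theorem and a partition of unity in $q$; only then does a local inversion of the resulting nearly diagonal quadratic map give $\overline X_q$ and the Lipschitz right inverses required by Proposition~\ref{lem:geoalg}. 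Two further points: in part (1) the feedback makes the transport velocity depend on $\omega$ (through both $\nabla^\perp\psi_{\omega,\gamma}$ and $\mathscr C$), so one cannot simply cite Yudovich-type well-posedness; the paper instead runs a Schauder fixed point on the vorticity, extending the velocity field outside $\Omega$ to define the flow across the permeable part $\Sigma$, and the uniqueness you invoke there is in fact not known for these boundary conditions (cf. Open Problem~\ref{OP!}) but is not needed. Your argument for part (2) does match the paper's.
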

We note that there is a slight abuse of notation in writing $C^0([0,T];W^{1,p}(\mathcal{F}(t)))$, for space of functions defined for each $t$ in the fluid domain $\mathcal{F}(t)$.
Furthermore, we require this added regularity of the velocity field to insure that the trace is well-defined at $t=0$.
\ \par 
A few further comments are in order. \par
\ \par
\noindent 
\textbf{Comparison with the controllability result  in \cite{GKS}.}
Theorem~\ref{main} extends the result in \cite{GKS} where the exact controllability of a single rigid body immersed in a 2D irrotational perfect incompressible fluid 
 from an initial position and velocity to a final position and velocity was investigated. There the control was already set on a non-empty open part of the external boundary and was obtained as a regularization of some time impulses. 
On the opposite  Theorem~\ref{main}  proves that it is possible to drive some rigid bodies along a given admissible trajectory by a control which is active all  the time, while, in terms of the space variable, this control is also supported on  a non-empty open part of the external boundary. Moreover the control in  Theorem~\ref{main}  has the convenience to be achieved as a feedback law, depending only on the instantaneous state of the fluid-rigid bodies system. Thus Theorem~\ref{main}  provides a positive answer to the open problem mentioned in \cite{GKS}, in the wider setting where several rigid bodies and irrotational flows are considered. Of course Theorem~\ref{main}
 also allow implies controllability of the positions and velocities of the rigid bodies at final time, by considering a targeted trajectory with the desired final positions and velocities. 
 For instance, in view of practical applications, one may think at a regrouping of the rigid bodies in a given subregion of the domain, with enough volume to contain them all with positive distances. In the opposite direction one may think at a spreading of the rigid bodies in the fluid domain, thinking at  a medical treatment which requires dispersion of some medicinal particles. \par
Hence this provides an extension to the main result of \cite{GKS}, but on the other hand, since the control can be active all the time in the result of Theorem~\ref{main}, it is not possible to guarantee a small total flux condition as we did in \cite[Remark 1]{GKS} by a simple rescaling in time. 
\par
\ \par
\noindent
\textbf{Uniqueness part (second part) of Theorem~\ref{main}.}
In the case of an $L^\infty$ vorticity, a uniqueness result for the fluid-solid system has been obtained in \cite{GS-Uniq} in the case  without control, that is, of impermeable boundary condition (vanishing normal component) on the whole external boundary, rather than the permeable boundary conditions \eqref{Yudo1} and \eqref{Yudo2}. 
For the latter, uniqueness in the setting of bounded vorticity is a delicate issue, already in the case of a fluid alone. Indeed, in contrast to his celebrated result in the impermeable case \cite{Yudo2}, Yudovich only succeeded to obtain uniqueness for solutions which are much more regular in \cite{Yudo}. Recently  Weigant and Papin obtained  in  \cite{WP}  the uniqueness of the solutions with bounded vorticity with a proof in the case  of a rectangle with the flow entering on a lateral side and exiting on the opposite side. 
The extension of such a uniqueness result to the case of the fluid-rigid bodies system seems challenging, as it involves a free boundary problem and  a more involved geometry. Hence this leaves the following open problem.
\begin{OpenP} \label{OP!}  
For any $\delta >0$, for any $q_0$ in $\mathcal Q_\delta$ and $q_1$ in $ \R^{3N}$, 
for any initial vorticity  $\omega_0$ in $L^\infty ( \mathcal{F} (q (0)))$ 
for any $g$ in  $ C^{\infty}([0,T]; \mathcal C )$, 
there exists $T>0$, there exists a unique velocity field  $u $ in $   LL(T)\cap C^0([0,T];W^{1,p}(\mathcal{F}(t)))$, for all $p\in [1,+\infty)$, with  $\curl u (0,\cdot) = \omega_0$ and for any
$t$ in $[0,T]$, $\curl u (t,\cdot) $ in $L^\infty(\mathcal{F}(q))$, and a unique $q$ in $C^{2} ([0,T] ; \mathcal Q_\delta )$
with the initial conditions  $q(0)= q_0$ and  $ q'(0) = q_1 $ on the initial positions and velocities of the rigid bodies, 
such that $(q,u)$ satisfies for all $t$ in $[0,T]$:
the Euler equations \eqref{E1}-\eqref{E2},
the Newton equations \eqref{EqTrans}-\eqref{EqRot} for $\kappa$ in $ \{1,2, \ldots , N \}$,  
the interface condition \eqref{souslab}, 
the boundary condition \eqref{Yudo1} on the normal velocity, 
the null boundary condition \eqref{Yudo2} on the entering vorticity
and the circulation conditions \eqref{eq:circs}. 
\end{OpenP}

However the second part of Theorem~\ref{main} claims that, for a given bounded initial vorticity, 
 should there be several solutions, the control would drive the rigid bodies of all these solutions along the targeted motion. On the other hand the fluid motions are not guaranteed to coincide. 

 Let us emphasize that the control in  \eqref{mixe} involves the acceleration  $q''(t)$ of the targeted motion, rather than the acceleration $\tilde{q}''$ of the solution itself  as in \eqref{pasmixe} for the part of the statement regarding the existence of a motion associated with the targeted trajectory $q$. 
Indeed the result in the  second part of Theorem~\ref{main} is more general since it covers the case of solutions corresponding to distinct initial vorticities and velocity circulations. 
On the other hand if the initial positions and velocities of the rigid bodies do not match, that is if $(\tilde{q}(0),\tilde{q}'(0)) \neq (q(0),q'(0) )$, 
 the control law \eqref{pasmixe} will not guarantee any decay of the initial condition errors. 
 Still the control law  can be adapted to provide a stability result 
 %under the same assumptions as in the second part of Theorem~\ref{main} except for the initial conditions for which we only assume 
 in the case where the sole assumption on the  initial conditions is that  $\tilde{q}(0)$ and $ q(0)$ are sufficiently close and that 
 the boundary condition \eqref{Yudo1} on the normal velocity is satisfied with  
\begin{equation} \label{mixe-stab}
g(t)= \mathscr C (\tilde{q}(t),\tilde{q}'(t), q''(t) + K_P (q(t) - \tilde{q}(t)) + K_D   (q'(t) - \tilde{q}'(t)) ,\tilde{\gamma},\curl \tilde{u}(t,\cdot)) ,
\end{equation}
 where  $K_P$ and $K_D$  are  positive definite, symmetric $3N  \times  {3N} $ matrices. 
Then the error $q (t) - \tilde{q}(t)$ exponentially decays to $0$ as the time $t$ goes to $+\infty$, with a rate which can be made arbitrarily fast by appropriate choices of $K_P$ and $K_D$. 
The  indexes $P$ and $D$ in the notations $K_P$ and $K_D$ respectively refer to ``proportional'' and ``derivative'', according to the usual terminology in robotics, see  \cite[Chapter 11]{MLS}, \cite[Section 4.5]{SHV} and  \cite[Section 8.5]{SSVO}. This stability result will be proved by a little modification of the proof of the second part of Theorem~\ref{main}. 
The interest of such a result is that in practice the positions and velocities of the rigid bodies cannot be determined exactly, see \cite{CCOR,CMM,MR}. 
\par
\ \par
\noindent
\textbf{Regularity issues.}
A natural issue is whether it is possible to preserve the 
 regularity of the vorticity when time proceeds 
when achieving the targeted motion. 
In that case it could be possible to  adjust the boundary condition on the entering vorticity, by substituting to \eqref{Yudo2} an appropriate inhomogeneous condition. 
Such a construction was performed in \cite{Coron-Reg}  in the case of the controllability of a fluid alone by means of an open-loop control.

On the opposite direction, one may wonder whether it is possible to extend the existence part of Theorem~\ref{main} to the case where the initial  vorticity is only in $L^p$ with $p\geq 1$. Let us recall in that direction that, on the one hand, some existence results, in the case of impermeable  boundary condition,  rather than the  conditions \eqref{Yudo1} and \eqref{Yudo2}, have been obtained in \cite{GS-i,GS-l,GLS,Sueur,WX} in the case of systems coupling one single rigid body and a two-dimensional perfect incompressible flow without any external boundary; and, on the other hand, some existence results have been obtained  in \cite{BS,MU,M} in the case of boundary conditions such as \eqref{Yudo1} and \eqref{Yudo2}, but for a fluid alone. 

One may also wonder whether it is possible to reach some targeted  trajectories with lower regularity in time, by the means of controls $g$ which are also of lower regularity in time. Indeed this is very much related to the strategy of  \cite{GKS} where impulsive controls were considered. 
\par
\ \par
\noindent
\textbf{Energy saving.}
A natural question is whether it is possible to turn off the control when the targeted motion is, at some time, solution of the uncontrolled equation, that is to guarantee that the mapping $\mathscr C$ vanishes on 
$(q,q',q'',\omega)$ satisfying  \eqref{E1}- \eqref{transp-vort},  \eqref{EqTrans}-\eqref{EqRot} for  $ \kappa $ in $ \{1,2, \ldots , N \}$, \eqref{souslab},  \eqref{Yudo1} with $g= 0$ and \eqref{Yudo2}. Some extra comments on this issue are given in Section~\ref{afaire}, after the proof of Theorem~\ref{main}; this postponing allows us to be more precise regarding some technical aspects of the question.
\par
\ \par
\noindent
\textbf{Three dimensional case.}
Another natural extension of the results of Theorem~\ref{main} is the case where the system is set in three space dimensions. In the impermeable/uncontrolled case a reformulation of the Newton equations as a second-order ODE for the solid positions is tackled  in \cite{GS3D}. However the design of the control below relies on the possibility to use complex analysis in two dimensions. Therefore several arguments of the proof of Theorem~\ref{main} regarding the construction of the control law would need to be adapted. 
\par
\ \par
\noindent
\textbf{Controlled collisions.}
A challenging question is whether it is to possible to provoke some controlled collisions. Let us recall that collisions can occur even without control, see \cite{H,HM,C}. However one may imagine to be able,  given a couple of rigid bodies, some collision positions ---and perhaps also some collision velocities--- to prove the existence of a control such that for the corresponding controlled solution of  \eqref{E1}- \eqref{transp-vort},  \eqref{EqTrans}-\eqref{EqRot} for  $ \kappa $ in $ \{1,2, \ldots , N \}$, \eqref{souslab}, this collision occurs.  
In this direction let us mention that in  the proof of Theorem~\ref{main} 
we will use certain arguments of  complex analysis, see also \cite{GKS}, which involve approximations by rational functions  of harmonic functions which are first defined in some respective neighborhoods of the rigid bodies. 
When two rigid bodies become close one could face some difficulty in the fusion of such local approximations, see  \cite{Gaier}. 
\par
 \ \par
\noindent
\textbf{Case of the Navier-Stokes equations.}
One could also be interested in extending the results of Theorem~\ref{main} 
 to the  case of the Navier-Stokes equations rather than the Euler equations as a model for the fluid part of the system. Then the boundary conditions have to be modified and several choices can be made.
Beside the classical no-slip conditions, there is a condition which models slip and friction at the boundaries referred to here as Navier slip-with-friction boundary conditions. The latter case is  closer to the case of the Euler equation where slip, that is  discrepancy of the tangential velocity at the boundary, is allowed. 
 Regarding the Cauchy problem, in the uncontrolled case, 
 the existence of weak Leray-type solutions to the Navier-Stokes system in presence of a rigid body when the Navier slip-with-friction conditions are considered at the boundaries 
has been  proved in \cite{PS} when the system occupies the whole space and in \cite{GVH} 
 when the system occupies a bounded domain.
These two results tackle the three-dimensional case but the latter result has been adapted in \cite{B} to the two-dimensional case with some extra properties.
For strong solutions, existence and uniqueness of solutions in some Hilbert spaces have been proved in  \cite{Wang}.
 
On the other hand the  result in \cite{GKS}, on the exact controllability of a single rigid body immersed in a 2D irrotational perfect incompressible fluid   mentioned above was extended in \cite{K} to the Navier-Stokes equations in the case where the Navier slip-with-friction boundary conditions are prescribed on the interface between the fluid and the body,
see also \cite{D,DT} for complementary results.    One key ingredient was a rescaling in time 
 which allows to reduce the problem to the case where the viscosity is small (first introduced by Coron in \cite{Coron-NS}), to use an asymptotic expansion and the inviscid  result of \cite{GKS} for the leading order. This strategy works for the Navier slip-with-friction boundary conditions  because the corresponding boundary layers have a small amplitude. 
 Unfortunately, for the problem of trajectory tracking considered in this paper,  one is not allowed to effectuate such a time-rescaling. 
 
 However it is possible that the method used in the proof of Theorem~\ref{main} could be adapted to the case 
  the Navier-Stokes equations, with the Navier slip-with-friction  boundary conditions. 
The case of  the Navier-Stokes equations, with the no-slip  boundary conditions is clearly more challenging. 
  \par
 \ \par
\noindent
\textbf{A nonlinear method reminiscent of Coron's return method.}
To prove Theorem~\ref{main} we will make use of a nonlinear method  which is reminiscent of Coron's return method, cf. \cite[Chapter 6]{Coron}, in the sense that it takes advantage of the nonlinearity of the problem  out of equilibria.
 However our method rather considers time as a parameter and allows us to prove a trajectory tracking result rather than a controllability result.  
It uses the homogeneity of the nonlinear part of a nonlinear equation and the existence of a single non-trivial zero at which the differential of this term is right-invertible to solve the equation for general data, see Section  \ref{sec-proto}. One could also compare to Coron's Phantom tracking method from \cite{Coron-Pha}, which takes advantage of the nonlinearity in a similar fashion in order to establish a stabilization result.
  \par
 \ \par
\noindent
\textbf{Practical use.}
An attempt to put in practice the theoretical result in Theorem~\ref{main} would face the drawback of the feedback laws  \eqref{mixe} and  \eqref{pasmixe} depend on the full state-function $\omega(t,\cdot)$, rather than on only some norms, moments or any finite dimensional information extracted from it. However the Lipschitz dependence leads to the hope that a bad identification of the vorticity of the fluid by the operator in charge to apply the control at the boundary may not affect the resulting controlled trajectory too drastically. Another difficulty is linked to the design of this control law by itself. Indeed, in this direction, for a quite important part of the analysis performed below in the proof of Theorem~\ref{main} the observations done in 
\cite{HK} for a slightly different problem are also relevant. There the authors discuss an alternative method to the complex-analytic one which is developed here. This method is more application-friendly. However this alternative method relies on linear techniques which seem difficult to adapt here.

%%%%%%%%%%%%
\section{Organisation of the rest of the paper.}
The rest of the paper is organised as follows. 
In Section~\ref{sec:no} we recall the decomposition of the fluid velocity into elementary velocities according to the vorticity, the circulations, the external boundary control and the velocities of the rigid bodies. 
Then we reformulate  in Section~\ref{sec:no2}  
the solid equations as an equation with the control as the unknown, and the solid motion and the vorticity as data. 
In Section~\ref{sec:control} we design the feedback control. Section~\ref{sec:existence} is devoted to the end of the proof of the first part of Theorem~\ref{main} regarding the existence of a controlled solution with the targeted motion of the rigid bodies. 
Then, Section~\ref{sec:conclusion-r} is  devoted to the end of the proof of the second part of Theorem~\ref{main} regarding the uniqueness of the motion of the rigid bodies for the 
hybrid control law   \eqref{mixe}. 
Finally  in Section  \ref{afaire} we give some extra comments on the issue of energy saving discussed above.%, below Theorem~\ref{main}. 

%
%
%
%%%%%%%%%%%%%%%%%%%%%%%%%%%%%%%%%%%%%%%%%%%%%%%%%%%%%%%%%%%%%%%%%%%%%%%%%%%%%%%%%%%%%%%%%%%%%%%%
%
%
%
%

%
%%%%%%%%%%%%%%%%%%%%
%
%
%
\section{Decomposition of the fluid velocity according to the solids motions, the vorticity, the circulation and the external control}
\label{sec:no}
Let $q$ in $ \mathcal{Q}$. For any $q'$ in $\R^{3N}$, for any $\omega$ bounded over $\mathcal{F}(q)$,  for any $\gamma:= (\gamma_\kappa )_{\kappa=1,\ldots,N} $ in $\R^{N}$, for any $g$ in $\mathcal C$, classically
there exists a unique  log-Lipschitz vector field $u$  such that 
\begin{subequations} 
\label{div-curl}
\begin{gather}
\div u =0 \text{ in } \mathcal{F} (q), \quad
\curl u =\omega \text{ in } \mathcal{F} (q), \quad u\cdot n =g  \text{ on } \partial\Omega, \\
u \cdot n = \big( \theta^{\prime}_\kappa (\cdot-h_\kappa)^\perp + h^{ \prime}_\kappa \big) \cdot n 
\text{ on }\partial\mathcal S_\kappa(t) 
\text{ and } \int_{\partial\mathcal S_\kappa (t)} u(t)\cdot\tau \, {\rm d}s = \gamma_\kappa , \quad  \text{ for all } \ \kappa  \in \{1,2, \ldots , N \}.
\end{gather}
\end{subequations}
The  circulations conditions above are important to guarantee the uniqueness of the system \eqref{div-curl}; this is related to the Hodge-De Rham theory. See for example Kato \cite{Kato}.

We now decompose the vector field $u$ in several elementary contributions which convey the influence of the vorticity, of the circulations, of the external boundary control and of the velocities of the rigid bodies. 
\subsection{Kirchhoff potentials}
\label{KP}
Consider for any $ \kappa $ in $ \{1,2, \ldots , N \}$
the functions $\xi_{ \kappa,j} (q,\cdot)=\xi_{k} (q,\cdot)$, for $j=1,2,3$ and $k=3(\kappa-1)+j$, defined by $\xi_{ \kappa,j} (q,x) :=0 $ on $\partial\mathcal{F}(q)\setminus \partial \mathcal{S}_{\kappa}$ and by $\xi_{ \kappa,j} (q,x) := e_{j}, \text{ for } j=1,2$, and $ \xi_{ \kappa,3} (q,x) := (x-h_\kappa)^\perp \text{ on }\partial\mathcal{S}_{\kappa}.$
Above $e_1$ and $e_2$ are the unit vectors of the canonical basis.

We denote by $K_{ \kappa,j} (q,\cdot)=K_{k} (q,\cdot)$ the normal trace of $\xi_{ \kappa,j} $ on $ \partial \mathcal{F}(q)$, that is:
$K_{ \kappa,j} (q,\cdot) := n \cdot \xi_{ \kappa,j} (q,\cdot) \text{ on } \partial\mathcal{F}(q)$,
where as before $n$ denotes the unit normal vector pointing outside ${\mathcal F}(q)$. 

We introduce the Kirchhoff potentials $\varphi_{ \kappa,j}(q,\cdot)=\varphi_{k}(q,\cdot)$, for $j=1,2,3$ and $k=3(\kappa-1)+j$, as the unique (up to an additive constant) solutions in $\mathcal F(q)$ of the following Neumann problem:
\begin{subequations} \label{Kir}
\begin{alignat}{3} \label{Kir1}
\Delta \varphi_{ \kappa,j} &= 0 & \quad & \text{ in } \mathcal F(q),\\ \label{Kir2}
\frac{\partial \varphi_{ \kappa,j}}{\partial n} (q,\cdot)&= K_{\kappa,j} (q,\cdot) & \quad & \text{ on }\partial\mathcal{F}(q).
\end{alignat}
\end{subequations}
We also denote 
\begin{equation} \label{Kir-gras}
\boldsymbol{K}_\kappa(q,\cdot) :=(K_{\kappa,1}(q,\cdot),K_{\kappa,2}(q,\cdot),K_{\kappa,3}(q,\cdot))^{t} \, \text{ and } \, 
\boldsymbol\varphi_\kappa(q,\cdot) :=(\varphi_{\kappa,1}(q,\cdot),\varphi_{\kappa,2}(q,\cdot),\varphi_{\kappa,3}(q,\cdot))^{t}.
\end{equation}
Following the same rules of notation as for $q$, we define the function $\varphi(q,\cdot)$ by concatenating 
into a vector of length $3N$ the functions $\boldsymbol\varphi_\kappa(q,\cdot)$, namely:
$$\varphi(q,\cdot) :=(\boldsymbol\varphi_1(q,\cdot)^t,\ldots,\boldsymbol\varphi_{N}(q,\cdot)^t)^t.$$
\subsection{Stream functions for the circulation}
To account for the velocity circulations around the solids, we introduce for each $\kappa $ in $ \{ 1, \dots, N\}$ the stream function $\psi_{\kappa}= \psi_{\kappa}(q,\cdot)$ defined on $\mathcal F(q)$ as the harmonic vector field which has circulation $\delta_{\kappa,\nu}$ around $\partial \mathcal{S}_{\nu}(q)$. More precisely, for every $q$, one can show that there exists a unique family $(C_{\kappa,\nu}(q))_{\nu \in \{1,2, \ldots , N \}}$ in $\mathbb R^N$ such that the unique solution $\psi_{\kappa}(q,\cdot)$ of the Dirichlet problem:
\begin{subequations} 
\label{stream_circulation}
\begin{alignat}{3}
\Delta \psi_{\kappa}(q,\cdot) & =0 & \quad & \text{ in } \mathcal F(q) \\
\psi_{\kappa}(q,\cdot) & = C_{\kappa,\nu}(q) & \quad & \text{ on } \partial \mathcal S_{\nu}(q), \text{ for } \nu \in \{1,2, \ldots , N \} , \\
\psi_{\kappa}(q,\cdot) & =0 & & \text{ on } \partial\Omega,
\end{alignat}
satisfies
\begin{equation} \label{circ-norma}
\int_{\partial\mathcal S_{\nu}(q)} \frac{\partial \psi_{\kappa}}{\partial n} (q,\cdot) {\rm d}s=-\delta_{\kappa,\nu}, \text{ for } \nu \in \{1,2, \ldots , N \},
\end{equation}
\end{subequations}
where $ \delta_{\nu, \kappa}$ is the Kronecker symbol.
As before, we define the concatenation into a vector of length $N$:
 $$\psi(q,\cdot):=(\psi_1(q,\cdot),\ldots,\psi_N(q,\cdot))^t .$$
\subsection{Hydrodynamic stream function}
For every bounded scalar function $\omega$ over $\mathcal{F}(q)$, there exists a unique family $(C_{\omega,\nu}(q))_{\nu \in \{1,2, \ldots , N \}}\in\mathbb R^N$ such that the unique solution $\psi_{\omega}(q,\cdot)$ in $ H^1(\mathcal{F}(q))$ of:
\begin{subequations} \label{def_hydro-stream}
\begin{alignat}{3}
\Delta \psi_{\omega}(q,\cdot) & =\omega & \quad & \text{ in } \mathcal F(q) \\
\psi_{\omega}(q,\cdot) & = C_{\omega,\nu}(q) & \quad & \text{ on } \partial \mathcal S_{\nu}(q), \text{ for } \nu \in \{1,2, \ldots , N \} , \\
\psi_{\omega}(q,\cdot) & =0 & & \text{ on } \partial\Omega,
\end{alignat}
satisfies
\begin{equation} \label{circ-hydro}
\int_{\partial\mathcal S_{\nu}(q)} \frac{\partial \psi_{\omega}}{\partial n} (q,\cdot) {\rm d}s=0, \text{ for } \nu \in \{1,2, \ldots , N \}.
\end{equation}
\end{subequations}
It is classical that $\nabla^\perp \psi_{\omega}$ has log-Lipschitz regularity (see again \cite{Kato} for instance). \par
We gather the stream functions  due to the fluid vorticity and to the circulations 
by setting 
$$\psi_{\omega,\gamma}(q,\cdot) :=\psi_{\omega}(q,\cdot) + \psi(q,\cdot)\cdot\gamma .$$
\subsection{Potential due to the external control} 
\label{sec-alpha}

With any $q$ in $\mathcal{Q}$ and $g $ in $\mathcal C$ we associate 
\begin{equation} \label{DefAAlpha}
{\alpha} := \mathcal A[q,g]  \in C^\infty (\overline{\mathcal{F}(q)};\mathbb{R}),
\end{equation}
the unique solution to the following Neumann problem:
\begin{equation}\label{pot}
\Delta {\alpha} =0\ \text{in}\  \mathcal{F}(q) \quad    \text{ and } \quad 
\partial_{n} \,\alpha=g  {1}_{\Sigma}\ \text{on}\ \partial\mathcal{F}(q) ,
\end{equation}
with zero mean on $\mathcal{F}(q)$ (recall \eqref{def-Cpasb}). 
This zero mean condition allows to determine a unique solution to the Neumann problem but plays no role in the sequel. 
\subsection{Decomposition of the velocity}
Now, by the linearity of System   \eqref{div-curl}, we see that the unique solution $u$ to \eqref{div-curl} can be decomposed into 
\begin{equation} \label{EQ_irrotational_flow}
u =  u_f + u_c,   \text{ with  }
u_f 
 := \sum_{\kappa=1}^N \nabla (\boldsymbol\varphi_{\kappa} (q,\cdot)\cdot \boldsymbol q'_{\kappa} )
+ \nabla^\perp\psi_{\omega,\gamma} (q,\cdot)    \text{ and }
u_c 
 := \nabla \alpha .
\end{equation}
%
%
%
%
%%%%%%%%%%%%%%%%%%%%%%%%%%%%%%%%%%%%%%%%%%%%%%%%%%%%%%%%%%%%%%%%%%%%%%%%%%%%%%%%%%%%%%%%%%%%%%%%
%
%  \text{ }
%
%
%
\section{Reformulation of the Newton equations as a quadratic equation for the control}
\label{sec:no2}
This section is devoted to the reformulation of the solid equations in terms of the control, of the solid variables and of the vorticity. \par
To obtain this reformulation, we introduce test functions as follows.
For each integer ${\kappa}$ between $1$ and $N$, $q$ in $ \mathcal Q$, $\ell_{\kappa}^\ast$ in $ \R^2$ and $r_{k}^\ast $ in $ \R$,  we consider the following potential vector field $\mathcal F(q)$:
$$
u^\ast_{\kappa} :=\nabla (\boldsymbol\varphi_{\kappa} (q,\cdot)\cdot p^\ast_{\kappa}), \quad
\text{  where }  \quad p^\ast_{\kappa}:=(\ell^{\ast t}_{\kappa},r^\ast_{\kappa})^t .
$$
By \eqref{Kir1}, we have that 
$$u^\ast_{\kappa} \cdot n = \delta_{\kappa ,\nu} (\ell^\ast_{\kappa} + r^\ast_{\kappa}(\cdot - h_{\kappa})^\perp) \cdot n \,   \text{ on } \,   \partial\mathcal S_{\nu} (q)
\quad  \text{ and } \quad  
u^\ast_{\kappa} \cdot n =0  \,   \text{ on } \,  \partial \Omega .$$
By \eqref{EqTrans}, \eqref{EqRot}, \eqref{Kir2}, the fact that $u^\ast_{\kappa}$ is divergence-free in $\mathcal F(q)$, an integration by parts and \eqref{E1}, we have
\begin{equation} \label{EqIndividuelle}
m_{\kappa} h_\kappa'' \cdot\ell^\ast_{\kappa}+\mathcal J_{\kappa} \theta_\kappa'' r^\ast_{\kappa}
= \int_{\mathcal F(q)} \nabla \pi \cdot u^\ast_{\kappa} \, {\rm d}x  
= -\int_{\mathcal F(q)}\left(\frac{\partial u}{\partial t}+ \frac{1}{2}\nabla|u|^2 + \omega u^{\perp} \right)\cdot u^\ast_{\kappa} \, {\rm d}x .
\end{equation}
We introduce the global test function
$$
u^\ast := \sum_{1 \leqslant \kappa \leqslant N} u^\ast_{\kappa}
\quad \text{  for } \quad 
p^\ast :=  ( p^\ast_{\kappa})_{1 \leq \kappa \leq N} \in \mathbb R^{3N} , 
$$ 
and the genuine mass matrix $\mathcal{M}^g$ defined as the positive definite diagonal $3N\times 3N$ matrix
$$
\mathcal{M}^g:={\rm diag}( \mathcal{M}^g_1,\ldots, \mathcal{M}^g_N)
\quad    \text{ with }  \quad
{\mathcal M}^g_{\kappa} :=  {\rm diag}(	m_\kappa, m_\kappa , {\mathcal J}_\kappa).
$$
Summing \eqref{EqIndividuelle} over all indices ${\kappa}$ and using the decomposition \eqref{EQ_irrotational_flow}, we therefore obtain: 
\begin{multline} \label{EqSolidesGlobale} 
\int_{\mathcal F(q)}\frac{\partial u_c}{\partial t}  \cdot u^\ast\, {\rm d}x
+  \int_{\mathcal F(q)} \left( \frac{1}{2}\nabla|u_c|^2 \right) \cdot u^\ast\, {\rm d}x 
+  \int_{\mathcal F(q)}\nabla (u_f \cdot u_c ) \cdot u^\ast\, {\rm d}x - \int_{\mathcal F(q)} \omega u_c^{\perp}\cdot u^\ast \, {\rm d}x  \\
 = -{\mathcal M}^g q^{ \prime\prime} \cdot p^\ast  - \int_{\mathcal F(q)} \Big( \frac{\partial u_f}{\partial t}+\frac{1}{2}\nabla|u_f|^2 \Big) \cdot u^\ast\, {\rm d}x .
\end{multline}
We now reformulate each term in the left-hand side of \eqref{EqSolidesGlobale} and then handle the right-hand side. 
%(actually the right hand side will be handled altogether). 
%
\begin{itemize}
\item Let us consider the first term in the left-hand side of \eqref{EqSolidesGlobale}.
By Leibniz's formula and Reynolds' transport formula, observing that the fluid domain is preserved by the vector field $u_f$, we have
\begin{eqnarray*}
\int_{\mathcal F(q)} \frac{\partial u_c}{\partial t}  \cdot u^\ast\, {\rm d}x
&=& \int_{\mathcal F(q)} \frac{\partial (u_c \cdot u^\ast ) }{\partial t}  \, {\rm d}x
- \int_{\mathcal F(q)} u_c \cdot \frac{\partial u^\ast}{\partial t}  \, {\rm d}x \\
&=& \frac{d}{dt}\left(\int_{\mathcal F(q)}u_c  \cdot u^\ast\, {\rm d}x \right) 
- \int_{\mathcal F(q)}u_f \cdot\nabla\left(u_c \cdot u^\ast\right)\, {\rm d}x
- \int_{\mathcal F(q)} u_c \cdot \frac{\partial u^\ast}{\partial t}  \, {\rm d}x .
\end{eqnarray*}
Then we integrate by parts the first two first integrals in the right hand side above and we compute the last one  by using the shape derivatives of the Kirchhoff potentials. We obtain
\begin{multline}\label{ga1}
\int_{\mathcal F(q)} \frac{\partial u_c}{\partial t}  \cdot u^\ast\, {\rm d}x
=
\frac{d}{dt}\left(\left(\int_{\mathcal S_\kappa(q)}\alpha \, \partial_n \boldsymbol\varphi_\kappa  (q,\cdot) {\rm d}s\right)_{1 \leq \kappa \leq N}  \cdot p^\ast\right) \\ 
-\left(\left(\int_{\mathcal S_\kappa(q)}\nabla\alpha\cdot\nabla\boldsymbol\varphi_\nu \, \partial_n \boldsymbol\varphi_\kappa  (q,\cdot) {\rm d}s\right)_{1 \leq \kappa \leq N}\cdot q'\right)_{1 \leq \nu \leq N} \cdot p^\ast \\-
\left(\left(\int_{\mathcal F(q)}\nabla\alpha \cdot \partial_{\boldsymbol q_\kappa} \nabla \boldsymbol\varphi_\nu  (q,\cdot) {\rm d}x \right)_{1 \leq \kappa \leq N}\cdot q'\right)_{1 \leq \nu \leq N} \cdot p^\ast.
\end{multline}
\item We now consider the second and third terms in the left-hand side of \eqref{EqSolidesGlobale}. By  integrations by parts, we obtain
\begin{gather}
\label{ga2} 
\int_{\mathcal F(q)} \left( \frac{1}{2}\nabla|u_c|^2 \right) \cdot u^\ast\, {\rm d}x
=\frac{1}{2} \left( \int_{ \partial \mathcal{S}_\kappa  (q)}  |  \nabla    \alpha |^{2} \, \partial_n \boldsymbol\varphi_\kappa(q,\cdot) \, {\rm d}s  \right)_{1 \leq \kappa \leq N}   \cdot p^\ast  , \\ 
\label{ga3} 
\int_{\mathcal F(q)}\nabla (u_f \cdot u_c ) \cdot u^\ast\, {\rm d}x
= \left( \int_{\partial\mathcal{S}_\kappa (q)}  ( \nabla  \alpha  \cdot u_f )   \,  \partial_n \boldsymbol\varphi_\kappa  (q,\cdot) \, ds \right)_{1 \leq \kappa \leq N}   \cdot p^\ast .
\end{gather}
\item Concerning the last term in the left-hand side of \eqref{EqSolidesGlobale}, we simply decompose: 
\begin{gather}
\label{ga4}
\int_{\mathcal F(q)} \omega u_c^{\perp}\cdot u^\ast \, {\rm d}x
=  \left(\int_{\mathcal F(q)}\omega \nabla^\perp \alpha \cdot\nabla \boldsymbol\varphi_{k}(q,\cdot)\,{\rm d}x \right)_{1 \leq \kappa \leq N}   \cdot p^\ast .
\end{gather}
\item We now turn to the right-hand side of \eqref{EqSolidesGlobale}.
By \cite[Theorem 1.2]{GLMS}, there exists a $C^{\infty}$ mapping which associates with $q$ in $\mathcal Q $ the $C^{\infty}$ mapping
\begin{equation*} 
\mathfrak F(q,\cdot):\R^{3N}\times \R^{3N} \times \R^N\times C^\infty (\overline{\mathcal{F}(q)}) \longrightarrow \R^{3N},
\end{equation*}
which depends only on the shape of $\mathcal{F}(q)$, such that 
\begin{equation} \label{ga5}
{\mathcal M}^g q^{ \prime\prime} \cdot p^\ast 
+ \int_{\mathcal F(q)} \left(\frac{\partial u_f}{\partial t}+\frac{1}{2}\nabla|u_f|^2\right)\cdot u^\ast\, {\rm d}x
=  -\mathfrak F(q,q',q'',\gamma,\omega)  \cdot p^\ast  .
\end{equation}
Actually thanks to the result \cite[Theorem 1.2]{GLMS}, the structure of the mapping $\mathfrak F $ can be made more precise. This will be useful in Section~\ref{sec:conclusion-r}. 
\end{itemize}
%
%%%%%%
Now to recast Equation~\eqref{EqSolidesGlobale} into a concise form relying on \eqref{ga1}--\eqref{ga5}, we first introduce some notations. For $q$ in $ \mathcal{Q}$ and $g$ in $\mathcal{C}$, we set 
%  thanks to \eqref{ga1}, \eqref{ga2}, \eqref{ga3}, \eqref{ga4} and \eqref{ga5} 
%
\begin{equation} \label{QL}
\mathfrak Q(q) [g]  := \frac{1}{2} \left( \int_{ \partial \mathcal{S}_\kappa  (q)}  |  \nabla    \alpha |^{2} \, \partial_n \boldsymbol\varphi_\kappa(q,\cdot) \, {\rm d}s  \right)_{1 \leq \kappa \leq N}  ,
\end{equation}
and recalling the notation $\alpha=\mathcal{A}[q,g]$ of Section~\ref{sec-alpha}, we set
\begin{gather} \label{QLL} 
\mathfrak L(q,q',\gamma,\omega) [g] :=  \left(\int_{\mathcal F(q)}\omega \nabla^\perp\alpha \cdot\nabla \boldsymbol\varphi_{\kappa}(q,\cdot)\,{\rm d}x \right)_{1 \leq \kappa \leq N} -\left( \int_{\partial\mathcal{S}_\kappa (q)}  ( \nabla  \alpha  \cdot u_f )   \,  \partial_n \boldsymbol\varphi_\kappa  (q,\cdot) \, {\rm d}s \right)_{1 \leq \kappa \leq N} \\
\nonumber
+\left(\left(\int_{\mathcal S_\kappa(q)}\nabla\alpha\cdot\nabla\boldsymbol\varphi_\kappa \, \partial_n \boldsymbol\varphi_\nu  (q,\cdot) {\rm d}s\right)_{1 \leq \nu \leq N}\cdot q'\right)_{1 \leq \kappa \leq N} \\
\nonumber+
\left(\left(\int_{\mathcal F(q)}\nabla\alpha \cdot \partial_{\boldsymbol q_\nu} \nabla \boldsymbol\varphi_\kappa (q,\cdot) {\rm d}x \right)_{1 \leq \nu \leq N}\cdot q'\right)_{1 \leq \kappa \leq N}.
\end{gather}
Also, of particular importance for the reformulation of the equation will be the following additional assumption on the control. \par
\ \\
{\bf Additional assumption on the control.}
In the sequel we will make the following additional assumption on the controls that we consider, in order to eliminate the time derivative from \eqref{ga1}. Let us first recall that the space $\mathcal C$ is defined in \eqref{def-Cpasb}. Now given $q$ in $\mathcal{Q}_\delta$, we define the set
\begin{equation} \label{DefCb}
\mathcal C_b(q) := \left\{  g \in   \mathcal C,\
\int_{\mathcal S_\kappa(q)}  \mathcal{A}[q,g] \, \partial_n \boldsymbol\varphi_\kappa  (q,\cdot) {\rm d}s =0\  \text{ for }  \,  1 \leq \kappa \leq N 
 \right\} .    
\end{equation}
To obtain Theorem~\ref{main}, we will consider controls $g$ in this set ${\mathcal C}_{b}$.
In this case when  $g$ is in $\mathcal C_b(q)$, by \eqref{ga1}, \eqref{ga2}, \eqref{ga3}, \eqref{ga4} and  \eqref{ga5}, the equation \eqref{EqSolidesGlobale}  now reads 
\begin{equation}
\label{pasproto}
\mathfrak Q(q) [g]
 +\mathfrak L(q,q',\gamma,\omega) [g]
= \mathfrak F (q,q',q'',\gamma,\omega) .
\end{equation}
\\
{\bf Conclusion.}
Therefore, under the assumption $g \in \mathcal C_b(q)$, the Euler equations \eqref{E1}-\eqref{E2}, 
the Newton equations \eqref{EqTrans}-\eqref{EqRot} for  $ \kappa $ in $\{1,2, \ldots , N \}$,  
 the interface condition \eqref{souslab},  and the boundary conditions \eqref{Yudo1}  with $g(t)$ in $\mathcal C_b(q(t))$ for every $t$ in $[0,T]$, 
 %and \eqref{Yudo2},  with $g(t)$ in $\mathcal C_b(q(t))$ for every $t$ in $[0,T]$,   
 are equivalent to the problem
\begin{align}\label{problem}
\begin{cases}
&\mathfrak Q(q) [g]
 +\mathfrak L(q,q',\gamma,\omega) [g]
= \mathfrak F (q,q',q'',\gamma,\omega) , \\
&\partial_t \omega +\left( \sum_{\kappa=1}^N \nabla (\boldsymbol\varphi_{\kappa} (q,\cdot)\cdot \boldsymbol q'_{\kappa} )
+ \nabla^\perp\psi_{\omega,\gamma} (q,\cdot)   + \nabla \mathcal{A}[q,g] \right)\cdot\nabla\omega=0\text{ in }\mathcal{F}(q). \\
\end{cases}
\end{align}
\ \par
\begin{remark}
Three comments are in order.\ 
\begin{itemize}
\item 
Above it is understood that, in the converse way, the fluid velocity is recovered by the equation \eqref{EQ_irrotational_flow}.
That it satisfies  the Euler equations \eqref{E1} for  a pressure field in  $L^{\infty}(0,T; H^{1}({\mathcal F}(t)))$,   which is  unique  up to a function depending only on time,  follows from the second equation of \eqref{problem} and the property of the $\curl$ operator. On the other hand it follows from \eqref{EQ_irrotational_flow} that it satisfies the divergence free condition \eqref{E2},  the interface condition \eqref{souslab},  and the boundary conditions \eqref{Yudo1}  with $g(t)$ in $\mathcal C_b(q(t))$ for every $t$ in $[0,T]$. \ \par \
\item 
The system \eqref{problem}  will be useful in the proof of the existence part of  Theorem~\ref{main} in Section~\ref{sec:existence}, while \eqref{pasproto} alone  will be used in the proof of the uniqueness part of   Theorem~\ref{main} in Section~\ref{sec:conclusion-r}. \ \par \
\item 
The unknowns of the problem \eqref{problem}  are $g$ and $\omega$, and one may observe that this  system is completely coupled in the sense that $g$ and $\omega$ are involved in both equations. 
 Still we will tackle these two equations separately. In Section~\ref{sec:control} we will start by proving the existence of a solution of the first equation of  \eqref{problem}  for the unknown $g$ in terms of $\omega$ considered as a parameter. 
Then in Section~\ref{sec:existence} we will solve the second equation of \eqref{problem}  for $\omega$ with $g$  given by the solution of the first equation identified in Section~\ref{sec:control}.
 \end{itemize}
\end{remark}
%
%
%
%%%%%%%%%%%%%%%%%%%%%%%%%%%%%%%%%%%%%%%%%%%%%%%%%%%%%%%%%%%%%%%%%%%%%%%%%%%%%%%%%%%%%%%%%%%%%%%%%%%%%%%%%%%%%%%%%%%%%%%%
%
%
%
%
%
%
%
%
\section{Design of a feedback control law}
\label{sec:control}
This section is devoted to the design of a control   $g$ (for the trace of normal velocity on the exterior boundary) on $[0,T]\times\Sigma$ of the form  $g  =  \mathscr C  (q,q',q'',\gamma,\omega)$,
where $\mathscr C$ is a Lipschitz function on 
$$\cup_{q\in \mathcal Q_\delta} \{  q\} \times \mathscr K  \times \mathscr{B}(q,r_\omega),$$
with values in ${\mathcal C}_{b}$ (defined in \eqref{DefCb}),
and aimed at fulfilling the first equation of \eqref{problem}. 
Recall that $\mathscr K$ is a compact subset   of  $  \R^{3N}  \times   \R^{3N}   \times   \R^{N}$ and
$\mathscr{B}(q,r_\omega)$ is defined in \eqref{lesboules}.
The second equation of \eqref{problem} will be tackled in the next section. \par
Precisely in this section we show the following.
\begin{proposition} \label{prop-design}
Let  $\delta>0$.  
There exists a finite dimensional subspace $\mathcal E\subset \mathcal C$ and for any $r_\omega>0$ and any  compact subset $\tilde{\mathscr K}$  of  $\R^{3N}    \times   \R^{N}$, there exists a locally Lipschitz mapping 
\begin{equation*}
\mathfrak{R} : \cup_{q\in\mathcal Q_\delta} \{  q\}  \times \tilde{\mathscr K} \times \mathscr{B}(q,r_\omega) \times \mathbb{R}^{3N} \longrightarrow {\mathcal E} , \quad 
(q,q',\gamma,\omega , p^\ast) \longmapsto \mathfrak{R}(q,q',\gamma,\omega , p^\ast) \in {\mathcal E} \cap \mathcal{C}_b (q),
\end{equation*}
%
%  into $ \mathcal E$ which maps   $q$ in $ \mathcal{Q}_\delta$, $q'$ in $ \R^{3N}$, $\omega$ in $ \mathscr{B}(q,r_\omega)$, $\gamma$  in $\R^{N}$  and $p^\ast$ in $  \mathbb{R}^{3N}$
%  to
% %
%  $$\mathfrak{R}(q,q',\gamma,\omega , p^\ast) \in  \mathcal{C}_b (q)\cap \mathcal E,$$ 
such that for any $p^\ast$ in $  \mathbb{R}^{3N}$, 
$$ \mathfrak Q(q) [\mathfrak{R}(q,q',\gamma,\omega , p^\ast)]
+ \mathfrak L(q,q',\gamma,\omega) [\mathfrak{R}(q,q',\gamma,\omega , p^\ast)] = p^\ast .$$
\end{proposition}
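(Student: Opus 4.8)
The plan is to reduce the statement to a finite-dimensional algebraic fact about a homogeneous quadratic map with a regular non-trivial zero, exactly along the lines announced in the introduction (the "nonlinear method" of Section~\ref{sec-proto}, here applied with time frozen). Fix $\delta>0$. The first step is to analyze the quadratic operator $g\mapsto \mathfrak Q(q)[g]$ alone, ignoring the lower-order term $\mathfrak L$ for the moment. One observes that $\mathfrak Q(q)$ is a genuinely quadratic (degree-two homogeneous) map in $g$, since $\alpha=\mathcal A[q,g]$ depends linearly on $g$ and $\mathfrak Q(q)[g]$ is an integral quadratic in $\nabla\alpha$. The crucial point, to be borrowed from the irrotational single-body analysis of \cite{GKS} and its complex-analytic toolbox (harmonic conjugates, approximation of harmonic functions near each $\mathcal S_\kappa$ by rational functions, fusion of local approximations into a global potential with boundary data supported in $\Sigma$), is that one can find a finite dimensional subspace $\mathcal E\subset\mathcal C$ and a particular $\bar g=\bar g(q)\in\mathcal E$ such that $\mathfrak Q(q)[\bar g]\neq 0$ in a controlled direction AND the differential $D_g\big(\mathfrak Q(q)\big)[\bar g]$, which is the symmetric bilinear form $h\mapsto 2\mathfrak B(q)(\bar g,h)$ where $\mathfrak B$ is the polarization of $\mathfrak Q$, is onto $\R^{3N}$ as $h$ ranges over $\mathcal E$. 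Because everything is to be uniform in $q\in\mathcal Q_\delta$, one must in fact produce such a $\bar g(q)$ depending continuously (indeed smoothly) on $q$, and choose $\mathcal E$ large enough (but still finite dimensional) to work simultaneously for all $q$ in a compact piece of $\mathcal Q_\delta$; a covering/partition argument plus the fact that the relevant surjectivity is an open condition gives a single finite dimensional $\mathcal E$ valid on all of $\mathcal Q_\delta$ (since $\mathcal Q_\delta$ is, up to the natural translation invariance, effectively parametrized by a finite-dimensional compact-up-to-translation set — or one localizes and patches).

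The second step is to incorporate $\mathfrak L$ and the constraint $g\in\mathcal C_b(q)$. The term $\mathfrak L(q,q',\gamma,\omega)[g]$ is linear in $g$ (again through $\alpha=\mathcal A[q,g]$) with coefficients that are locally Lipschitz in $(q,q',\gamma,\omega)$ on $\cup_{q}\{q\}\times\tilde{\mathscr K}\times\mathscr B(q,r_\omega)$; likewise $\mathcal C_b(q)$ is the kernel in $\mathcal C$ of $N$ linear functionals depending smoothly on $q$. The plan is to look for the solution in the form $g=t\,\bar g(q)+h$ with $h$ in a suitable complement, or more directly to set up, for fixed $(q,q',\gamma,\omega)$, the map
\[
\Phi_{q,q',\gamma,\omega}:\ \mathcal E\cap\mathcal C_b(q)\longrightarrow \R^{3N},\qquad
\Phi(g):=\mathfrak Q(q)[g]+\mathfrak L(q,q',\gamma,\omega)[g],
\]
and solve $\Phi(g)=p^\ast$ by the homogeneity trick: using $\mathfrak Q(q)[\lambda g]=\lambda^2\mathfrak Q(q)[g]$ and $\mathfrak L[\lambda g]=\lambda\mathfrak L[g]$, one seeks $g$ near a large multiple $\lambda\bar g(q)$ of the base point. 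Writing $g=\lambda\bar g(q)+v$ one gets $\Phi(g)=\lambda^2\mathfrak Q(q)[\bar g]+\lambda\big(2\mathfrak B(q)(\bar g,v)+\mathfrak L[\bar g]\big)+\big(\mathfrak Q(q)[v]+\mathfrak L[v]\big)$; after dividing by $\lambda$ and absorbing the $\mathcal O(1)$ pieces as $\lambda\to\infty$, the linearized equation in $v$ is governed precisely by the surjective map $v\mapsto 2\mathfrak B(q)(\bar g,v)$ of Step~1, so a quantitative implicit function theorem / fixed-point argument (e.g. a Lyapunov–Schmidt reduction onto a right-inverse, with the correction $\mathfrak Q(q)[v]+\mathfrak L[v]$ treated as a small Lipschitz perturbation for $\lambda$ large) yields a solution $v=v(q,q',\gamma,\omega,p^\ast)$. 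One must check the scaling is consistent: $p^\ast\in\R^{3N}$ arbitrary forces $\lambda\sim\sqrt{|p^\ast|}$, so a rescaling $g\mapsto \sqrt{|p^\ast|}\,\tilde g$ reduces to $|p^\ast|$ of size $1$, which is exactly why the homogeneity of degree exactly two is what makes this work for all data.

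The third step is the regularity/bookkeeping: verifying that the construction produces $\mathfrak R(q,q',\gamma,\omega,p^\ast)$ that is locally Lipschitz jointly in all arguments. This follows because the right-inverse of $2\mathfrak B(q)(\bar g,\cdot)$ can be chosen to depend smoothly on $q$ (finite-dimensional linear algebra with smoothly varying, uniformly surjective data on $\mathcal Q_\delta$, using a minimal-norm / pseudoinverse selection), the perturbation terms are locally Lipschitz in $(q,q',\gamma,\omega)$ with constants uniform on the relevant compact sets, and the fixed-point map obtained from the contraction depends Lipschitz-continuously on parameters. One also must confirm $\mathfrak R$ lands in $\mathcal E\cap\mathcal C_b(q)$: this is built in by working inside $\mathcal E\cap\mathcal C_b(q)$ from the start — one just needs $\bar g(q)\in\mathcal C_b(q)$ and the surjectivity of $v\mapsto 2\mathfrak B(q)(\bar g,v)$ to persist after restricting $v$ to $\mathcal E\cap\mathcal C_b(q)$, which again is arranged by enlarging $\mathcal E$ so that the $N$ extra linear constraints do not destroy surjectivity.

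\textbf{Main obstacle.} The heart of the matter — and where I expect the real work to be — is Step~1: producing, uniformly in $q\in\mathcal Q_\delta$, a finite dimensional $\mathcal E\subset\mathcal C$ (controls supported in the fixed arc $\Sigma\subset\partial\Omega$) together with a base control $\bar g(q)\in\mathcal E\cap\mathcal C_b(q)$ for which the polarized quadratic form $\mathfrak B(q)(\bar g,\cdot)$ is surjective onto $\R^{3N}$. This is the genuinely hard, PDE-and-complex-analysis part: one is trying to prescribe $3N$ independent linearized boundary "momentum responses" on the solids $\mathcal S_1,\dots,\mathcal S_N$ by acting only through the remote arc $\Sigma$, which requires a unique-continuation / density argument for harmonic functions (no nonzero harmonic $\alpha$ with $\partial_n\alpha=0$ off $\Sigma$ can make all the pairings $\int_{\partial\mathcal S_\kappa}(\nabla\bar g\text{-potential}\cdot\nabla\alpha)\,\partial_n\varphi_\kappa$ vanish unless $\alpha$ is trivial), together with the rational-approximation machinery of \cite{GKS} to realize the needed harmonic profiles by admissible finite dimensional $\Sigma$-supported data. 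Making this simultaneously valid for all $q\in\mathcal Q_\delta$ — and not merely pointwise in $q$ — with a single $\mathcal E$, via compactness of the shape space modulo translations and the openness of surjectivity, is the second delicate ingredient.
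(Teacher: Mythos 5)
Your Steps 1--2 contain a genuine gap, and it sits exactly at the point that the paper's proof is built around. You ask for a base control $\bar g(q)\in\mathcal E$ with $\mathfrak Q(q)[\bar g]\neq 0$ and with the polarized map $v\mapsto 2\mathfrak B(q)(\bar g,v)$ surjective, and you then expand $\Phi(\lambda\bar g+v)=\lambda^2\mathfrak Q(q)[\bar g]+\lambda\bigl(2\mathfrak B(q)(\bar g,v)+\mathfrak L[\bar g]\bigr)+\mathfrak Q(q)[v]+\mathfrak L[v]$, claiming that after dividing by $\lambda$ everything except $2\mathfrak B(q)(\bar g,v)$ is $O(1)$ and can be absorbed. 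It cannot: after division the first term is $\lambda\,\mathfrak Q(q)[\bar g]$, which diverges, so with $v$ bounded you can only solve $\Phi(g)=p^\ast$ when $p^\ast=\lambda^2\mathfrak Q(q)[\bar g]+O(\lambda)$. Quantitatively, the inverse-function-theorem ball around $\Phi(\lambda\bar g)$ has radius $c\lambda^2$ with $c$ fixed by the surjectivity modulus of $\mathfrak B(q)(\bar g,\cdot)$ and the size of $\mathfrak Q(q)$ on $\mathcal E$, and nothing in your construction forces $c\geq |\mathfrak Q(q)[\bar g]|$; hence only a conical neighbourhood of the single ray $\R_+\mathfrak Q(q)[\bar g]$ is reached, not all of $\R^{3N}$. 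Degree-two homogeneity alone does not ``work for all data'': the base point must be a \emph{regular non-trivial zero} of the restricted quadratic map, i.e. $Q_q(\overline X_q)=0$, $\|\overline X_q\|=1$, with $DQ_q(\overline X_q)$ right-invertible, all Lipschitz in $q$. With such a zero the parametric inverse function theorem (Lemma~\ref{repl}) covers a fixed ball around the small point $\varepsilon L_{\mathfrak p}(\overline X_{\mathfrak p})$, and the rescaling $R(\mathfrak p,y)=\varepsilon(y)^{-1}\mathscr R(\varepsilon(y),\mathfrak p,\varepsilon(y)^2y)$ of Proposition~\ref{lem:geoalg} then reaches every $y$; this is where the homogeneity is genuinely used.

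Producing that zero is itself nontrivial and is absent from your outline (your ``main obstacle'' paragraph identifies the surjectivity-from-$\Sigma$ issue but not the zero-finding). The paper gets it in Proposition~\ref{prop:tempo}: the non-disk/conical-hull property plus Carath\'eodory give positive, smooth coefficients $\mu_i(q,v)$ resolving every $v\in\R^{3N}$ along the directions $e_i(q)$; the conformal-map/Laurent-series and Runge constructions (Lemmas~\ref{LemBase}--\ref{Lem10}, which do match the part of your plan that is right in spirit) produce near-``orthonormal'' controls $g_i(q,\cdot)\in\mathcal C_b(q)\cap\mathcal E$; and the substitution $X_i=\sqrt{\mu_i(q,v)}$ together with the inversion of the map $\mathcal T$ locates $\overline X_q$ with $Q_q(\overline X_q)=0$, the surjectivity of $DQ_q(\overline X_q)$ following from the positivity of the coordinates $\overline X_{q,i}$ and \eqref{eq:muii}, with an explicit Lipschitz right inverse via a Neumann series. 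Note also that without this structure the mere existence of a nontrivial zero with surjective differential is not a formality (and your handling of the constraint \eqref{DefCb}, which consists of $3N$ scalar conditions rather than $N$, is done in the paper not by enlarging $\mathcal E$ but by taking the combinations $\lambda^{i,j}(q)$ exploiting the linear dependence of $3N+1$ vectors in $\R^{3N}$). The remaining bookkeeping in your Step 3 (Lipschitz right inverses, parametric fixed point) is consistent with the paper once the correct base point is in place.
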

\ \par
This proposition being granted, we will be able to design the control as follows. We set 
\begin{equation}  \label{def-tK}
\tilde{\mathscr K}:=\{(q',\gamma),\ (q',q'',\gamma)\in \mathscr K\text{ for some }q''\}.
\end{equation}
and we define $\mathscr C \in \text{Lip}( \cup_{q\in \mathcal Q_\delta} \{  q\} \times \mathscr K  \times \mathscr{B}(q,r_\omega); \mathcal E)$ by 
\begin{equation} \label{cladef}
 \mathscr C (q,q',q'',\gamma,\omega) := \mathfrak{R}(q,q',\gamma,\omega , \mathfrak F (q,q',q'',\gamma,\omega) ).
\end{equation}
The Lipschitz regularity of $\mathscr C$ follows from the boundedness and regularity  of $\mathfrak F$ mentioned above and from the regularity of $\mathfrak{R}$ given by  Proposition~\ref{prop-design}. \par
The rest of the section is devoted to the proof of Proposition~\ref{prop-design}. 
%
%
%
%
%%%%%%%%%%%%%%%%%%%%%%%%%%%%%%%%%%%%%%%%
%
%
%
\subsection{A nonlinear method to  solve linear perturbations of nonlinear equations}
  \label{sec-proto}
Our strategy relies on a nonlinear method: we prove the existence of solutions to nonlinear equations of the type 
\begin{equation}
  \label{proto}
Q(X) + L(X) = Y
\end{equation}
where $L$ is linear continuous and $Q$ is a quadratic operator admitting a non trivial zero at which the differential is right-invertible. The main point is using the difference of homogeneity between $Q$ and $L$. Some other superlinear homogeneous terms could be considered in place of $Q$, the key point being to deal with the linear part of the equation as a perturbation of the nonlinear part by means of a scaling argument.
This type of strategy where one takes advantage of the nonlinearity is reminiscent of Coron's phantom tracking method and return method, see \cite{Coron-Pha}, respectively \cite{Coron}. %, where, however, a trajectory is usually involved. 
Here we will only be interested in the existence of a solution to an equation of the form \eqref{proto}, which holds as a prototype for Equation~\eqref{pasproto} where the unknown is the control, so that we do not expect any uniqueness properties. 
Moreover we need to consider some parameterized version of \eqref{proto}, as the operators $Q$ and $L$ above depend on the parameters $q,q',\gamma,\omega$, and the feedback control law that we are looking for has to be robust and to depend on these parameters in a sufficiently regular manner. 
The precise result which we prove in this subsection is the following. 
\begin{proposition}
\label{lem:geoalg}
Let $d \geq 1$, 
let $(E,\|\cdot\|)$ be a  finite-dimensional normed linear space of dimension larger than $d$, $F$ a bounded metric space, 
$Q:F \times E\to \mathbb{R}^d$ a Lipschitz  map which to each $\mathfrak{p}$ in $ F$  associates  a quadratic operator $Q_\mathfrak{p}$ from $E$ to $\R^d$, and $L:F \times E\to \mathbb{R}^d$ a Lipschitz  map which to each $\mathfrak{p}$ in $ F$  associates a linear operator $L_\mathfrak{p}$ from $E$ to $\mathbb{R}^d$.
Furthermore, assume that there exists a Lipschitz map  $\mathfrak{p}\in F \mapsto\overline{X}_\mathfrak{p}$ in $ E$ satisfying for any $p$ in $F$,
$$\|\overline{X}_\mathfrak{p}\|=1 , \quad Q_\mathfrak{p}(\overline{X}_\mathfrak{p})=0,$$
and such that the family of linear operators $\left(D Q_\mathfrak{p}(\overline{X}_\mathfrak{p})\right)_{\mathfrak{p} \in F}$ admits a family of right inverses depending on $\mathfrak{p} \in F$ in a Lipschitz way.
Then there exists a locally Lipschitz mapping $R:F\times\mathbb{R}^d\to E$ such that 
$$(Q_\mathfrak{p}+L_\mathfrak{p})\circ R(\mathfrak{p},\cdot)=\mbox{Id}_{\mathbb{R}^d}.$$
\end{proposition}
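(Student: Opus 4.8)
\textbf{Proof sketch for Proposition~\ref{lem:geoalg}.}
The plan is to solve the parameterized equation $(Q_\mathfrak{p}+L_\mathfrak{p})(X)=Y$ by looking for a solution of the form $X = \lambda \overline{X}_\mathfrak{p} + Z$, where $\lambda>0$ is a scaling parameter to be adjusted and $Z$ is a small correction, and then exploiting that $Q_\mathfrak{p}$ is $2$-homogeneous while $L_\mathfrak{p}$ is $1$-homogeneous. First I would fix the family of right inverses: by hypothesis there is a Lipschitz map $\mathfrak{p}\mapsto A_\mathfrak{p}$ with $DQ_\mathfrak{p}(\overline{X}_\mathfrak{p})\circ A_\mathfrak{p} = \mathrm{Id}_{\mathbb{R}^d}$. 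Using the $2$-homogeneity of $Q_\mathfrak{p}$, one has $DQ_\mathfrak{p}(\lambda \overline{X}_\mathfrak{p}) = \lambda\, DQ_\mathfrak{p}(\overline{X}_\mathfrak{p})$, so $\tfrac{1}{\lambda}A_\mathfrak{p}$ is a right inverse of $DQ_\mathfrak{p}(\lambda\overline{X}_\mathfrak{p})$.

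Next I would set up a fixed-point argument. Writing $X = \lambda\overline{X}_\mathfrak{p}+Z$ and using $Q_\mathfrak{p}(\overline{X}_\mathfrak{p})=0$, expand
\[
Q_\mathfrak{p}(\lambda\overline{X}_\mathfrak{p}+Z) = \lambda\, DQ_\mathfrak{p}(\overline{X}_\mathfrak{p})[Z] + Q_\mathfrak{p}(Z),
\]
exactly (no remainder), since $Q_\mathfrak{p}$ is quadratic and we may polarize: $Q_\mathfrak{p}(\lambda\overline X_\mathfrak{p}+Z)=\lambda^2 Q_\mathfrak{p}(\overline X_\mathfrak{p}) + \lambda\, B_\mathfrak{p}(\overline X_\mathfrak{p},Z) + Q_\mathfrak{p}(Z)$ with $B_\mathfrak{p}$ the associated symmetric bilinear form and $B_\mathfrak{p}(\overline X_\mathfrak{p},\cdot) = DQ_\mathfrak{p}(\overline X_\mathfrak{p})$. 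Hence the equation $(Q_\mathfrak{p}+L_\mathfrak{p})(\lambda\overline X_\mathfrak{p}+Z)=Y$ becomes
\[
\lambda\, DQ_\mathfrak{p}(\overline X_\mathfrak{p})[Z] = Y - \lambda L_\mathfrak{p}(\overline X_\mathfrak{p}) - L_\mathfrak{p}(Z) - Q_\mathfrak{p}(Z),
\]
which I rewrite as a fixed-point problem $Z = \Phi_{\mathfrak{p},\lambda,Y}(Z)$ with
\[
\Phi_{\mathfrak{p},\lambda,Y}(Z) := \tfrac{1}{\lambda} A_\mathfrak{p}\big( Y - \lambda L_\mathfrak{p}(\overline X_\mathfrak{p}) - L_\mathfrak{p}(Z) - Q_\mathfrak{p}(Z) \big).
\]
Because $F$ is bounded and $\overline X_\mathfrak{p}$, $A_\mathfrak{p}$, $L_\mathfrak{p}$, $Q_\mathfrak{p}$ are Lipschitz in $\mathfrak{p}$, all the relevant operator norms are uniformly bounded on $F$. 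For $Y$ ranging over a bounded set and $\lambda$ chosen large (depending on that bound), one checks that $\Phi_{\mathfrak{p},\lambda,Y}$ maps a small ball $\overline B_E(0,\rho)$ into itself and is a contraction there: the term $\tfrac1\lambda A_\mathfrak{p}Y$ is $O(1/\lambda)$, the term $A_\mathfrak{p}L_\mathfrak{p}(\overline X_\mathfrak{p})$ is $O(1)$ but can be absorbed by choosing $\rho$ of order $1$ and then $\lambda$ large, while $\tfrac1\lambda A_\mathfrak{p}L_\mathfrak{p}(Z)$ and $\tfrac1\lambda A_\mathfrak{p}Q_\mathfrak{p}(Z)$ have small Lipschitz constants on $\overline B_E(0,\rho)$ once $\lambda$ is large and $\rho$ is not too large. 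The Banach fixed point theorem then yields a unique $Z = Z(\mathfrak{p},\lambda,Y)$ in that ball, depending Lipschitz-continuously on $(\mathfrak{p},Y)$ by the uniform contraction estimates (standard parameter-dependence of fixed points), for each fixed admissible $\lambda$.

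To conclude, I would choose $\lambda$ as a function of $\|Y\|$ --- e.g. $\lambda = \lambda(\|Y\|)$ a fixed locally Lipschitz function that is large for large $\|Y\|$ and bounded below by a positive constant --- so that the construction is valid locally uniformly in $Y\in\mathbb{R}^d$, and set $R(\mathfrak{p},Y) := \lambda(\|Y\|)\,\overline X_\mathfrak{p} + Z(\mathfrak{p},\lambda(\|Y\|),Y)$. This is locally Lipschitz on $F\times\mathbb{R}^d$ and satisfies $(Q_\mathfrak{p}+L_\mathfrak{p})(R(\mathfrak{p},Y))=Y$ by construction, i.e. $(Q_\mathfrak{p}+L_\mathfrak{p})\circ R(\mathfrak{p},\cdot)=\mathrm{Id}_{\mathbb{R}^d}$. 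One small point to handle: the hypothesis requires $\dim E > d$, which guarantees the right inverses $A_\mathfrak{p}$ exist with the stated Lipschitz dependence (indeed one may build them explicitly, e.g. $A_\mathfrak{p} = DQ_\mathfrak{p}(\overline X_\mathfrak{p})^{t}\big(DQ_\mathfrak{p}(\overline X_\mathfrak{p})DQ_\mathfrak{p}(\overline X_\mathfrak{p})^{t}\big)^{-1}$ whenever the surjectivity is uniform), but since Lipschitz right inverses are assumed given, this is only a consistency remark. The main obstacle I anticipate is bookkeeping the interplay between the three scales --- the radius $\rho$ of the fixed-point ball, the scaling parameter $\lambda$, and the size of $Y$ --- so that the self-mapping and contraction properties hold simultaneously and the resulting $R$ is genuinely locally Lipschitz (and not merely continuous) as a function of $Y$ through $\lambda(\|Y\|)$; the homogeneity mismatch between $Q$ and $L$ is precisely what makes this balancing possible.
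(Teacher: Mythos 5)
Your proposal is correct and follows essentially the same route as the paper's proof: there, one solves $(Q_\mathfrak{p}+\varepsilon L_\mathfrak{p})(\overline X_\mathfrak{p}+\varphi_\mathfrak{p}(x))=y$ for small $\varepsilon$ and small $y$ via a parameterized Banach fixed point (Lemma~\ref{repl}) with uniform constants, and then sets $R(\mathfrak{p},y)=\tfrac{1}{\varepsilon(y)}\mathscr R(\varepsilon(y),\mathfrak{p},\varepsilon(y)^2y)$, which is exactly your ansatz $X=\lambda\overline X_\mathfrak{p}+Z$ with $\lambda=1/\varepsilon(y)$, a contraction composed with a right inverse, and the scale chosen as a function of $\|Y\|$ to exploit the quadratic/linear homogeneity mismatch. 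The remaining differences are bookkeeping only: the paper absorbs $\varepsilon L_\mathfrak{p}$ into the differential (working through isomorphisms onto a complement of $\ker DQ_\mathfrak{p}(\overline X_\mathfrak{p})$) instead of treating $L_\mathfrak{p}(\overline X_\mathfrak{p})$ as an $O(1)$ term to absorb in the ball radius, and its scale grows like $\|Y\|^{1/2}$ rather than your $\|Y\|$, which is immaterial for the statement.
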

To prove Proposition~\ref{lem:geoalg} we will make use of  the  following version of the inverse function theorem where the size of the neighborhood is precised with respect to a parameter. %Here we will also make use of another notation: $\tilde{\mathfrak{p}}$ and $\tilde{F}$ 
%instead respectively of $\mathfrak{p}$ and $F$ since we will introduce yet another (scalar) parameter in the proof of Proposition~\ref{lem:geoalg} below. 
%
In the proof of Proposition~\ref{lem:geoalg} we will need to add a scalar parameter to  $\mathfrak{p}$, hence we introduce the notation
$\tilde{\mathfrak{p}}$ and $\tilde{F}$.
Furthermore, the space $E$ that we refer to in the Lemma below will not be quite the same as the one in Proposition~\ref{lem:geoalg}. Hence we will rather use the notation $\tilde E$.
%
%We will also need a projection on the orthogonal of the kernel of a small perturbation of  $D Q_\mathfrak{p}(\overline{X}_\mathfrak{p})$ so that we replace the space $E$ by another space denoted by $\tilde{E}$. 
%
Despite the fact that we will actually use it on a finite dimensional space, we state the result in the slightly more general setting of Banach spaces.  
\begin{lemma} \label{repl}
Let $\tilde{E}$ be a Banach space, $\tilde{F}$ a  metric space, for any $\tilde{\mathfrak{p}}$ in $ \tilde{F}$, $f_{\tilde{\mathfrak{p}}}:\tilde{E}\to \tilde{E}$ a  mapping which is $C^1$ in a neighborhood of $0$, such that the following are satisfied:
\begin{itemize}
\item[(i)] 
for any $\tilde{\mathfrak{p}}$ in $ \tilde{F}$, the linear map 
 $Df_{\tilde{\mathfrak{p}}}(0)$ is one-to-one on $\tilde{E}$,
the maps 
$(\tilde{\mathfrak{p}},x)\in \tilde{F} \times \tilde{E} \mapsto f_{\tilde{\mathfrak{p}}}(x)$ and $\tilde{\mathfrak{p}} \in \tilde{F} \mapsto Df_{\tilde{\mathfrak{p}}}(0)^{-1}$ are Lipschitz;
\item[(ii)]
there exist $r>0$ and $M>0$ such that   for all $x_1,x_2$ in $ B_{\tilde{E}}(0,r)$ and $\tilde{\mathfrak{p}}$ in $ \tilde{F}$, 
 $$\|Df_{\tilde{\mathfrak{p}}}(0)^{-1}\|_{\mathcal{L}(\tilde{E};\tilde{E})}\leq M
 \,   \text{ and } \, 
 \|Df_{\tilde{\mathfrak{p}}}(x_1)-Df_{\tilde{\mathfrak{p}}} (x_2)\|_{\mathcal{L}(\tilde{E};\tilde{E})}\leq \frac{1}{2M} .
$$
\end{itemize}
Then there exists a unique Lipschitz map
$$\tilde{\mathscr R}: \cup_{\tilde{\mathfrak{p}} \in \tilde{F}}\left(\{\tilde{\mathfrak{p}}\}\times B_{\tilde{E}} \left(f_{\tilde{\mathfrak{p}}}(0),\frac{r}{2M}\right)\right) \longrightarrow B_{\tilde{E}}(0,r),$$
such that for any $\tilde{\mathfrak{p}}$ in $\tilde{F}$, for any $y$ in $B_{\tilde{E}}\left(f_{\tilde{\mathfrak{p}}}(0),\frac{r}{2M}\right)$,
$$f_{\tilde{\mathfrak{p}}}(\tilde{\mathscr R}(\tilde{\mathfrak{p}},y))=y.$$ 
%
 % la suite est inutile donc dangereuse ?! 
% $\tilde{\mathscr R}(\tilde{\mathfrak{p}},\cdot)$ is  $C^1$ in a neighborhood of $ f_{\tilde{\mathfrak{p}}}(0)$ and its derivative is given by $D \tilde{\mathscr R}(\tilde{\mathfrak{p}},\cdot)(f_{\tilde{\mathfrak{p}}}(0)) = Df_{\tilde{\mathfrak{p}}}(0)^{-1}$.
\end{lemma}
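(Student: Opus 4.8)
The plan is to prove Lemma~\ref{repl} as a parametrized quantitative version of the Banach fixed point / inverse function theorem argument, tracking how all constants depend on $\tilde{\mathfrak{p}}$. First I would fix $\tilde{\mathfrak{p}}$ in $\tilde F$ and $y$ in $B_{\tilde E}(f_{\tilde{\mathfrak{p}}}(0),\tfrac{r}{2M})$, and reduce the equation $f_{\tilde{\mathfrak{p}}}(x)=y$ to a fixed point problem. Writing $A_{\tilde{\mathfrak{p}}}:=Df_{\tilde{\mathfrak{p}}}(0)$, which is invertible with $\|A_{\tilde{\mathfrak{p}}}^{-1}\|\leq M$ by hypothesis (ii), the equation is equivalent to $x=\Phi_{\tilde{\mathfrak{p}},y}(x)$ where $\Phi_{\tilde{\mathfrak{p}},y}(x):=x-A_{\tilde{\mathfrak{p}}}^{-1}\big(f_{\tilde{\mathfrak{p}}}(x)-y\big)$. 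The two points to check are that $\Phi_{\tilde{\mathfrak{p}},y}$ maps $\overline{B_{\tilde E}(0,r)}$ into itself and is a contraction there. For the contraction, $D\Phi_{\tilde{\mathfrak{p}},y}(x)=\mathrm{Id}-A_{\tilde{\mathfrak{p}}}^{-1}Df_{\tilde{\mathfrak{p}}}(x)=A_{\tilde{\mathfrak{p}}}^{-1}\big(Df_{\tilde{\mathfrak{p}}}(0)-Df_{\tilde{\mathfrak{p}}}(x)\big)$, so by (ii) its operator norm is at most $M\cdot\tfrac{1}{2M}=\tfrac12$ on $B_{\tilde E}(0,r)$; hence $\Phi_{\tilde{\mathfrak{p}},y}$ is $\tfrac12$-Lipschitz there. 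For stability, for $x$ in $\overline{B_{\tilde E}(0,r)}$ one estimates $\|\Phi_{\tilde{\mathfrak{p}},y}(x)\|\leq\|\Phi_{\tilde{\mathfrak{p}},y}(x)-\Phi_{\tilde{\mathfrak{p}},y}(0)\|+\|\Phi_{\tilde{\mathfrak{p}},y}(0)\|\leq\tfrac12\|x\|+\|A_{\tilde{\mathfrak{p}}}^{-1}(y-f_{\tilde{\mathfrak{p}}}(0))\|\leq\tfrac{r}{2}+M\cdot\tfrac{r}{2M}=r$, using exactly that $y\in B_{\tilde E}(f_{\tilde{\mathfrak{p}}}(0),\tfrac{r}{2M})$. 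So Banach's fixed point theorem on the complete metric space $\overline{B_{\tilde E}(0,r)}$ gives a unique fixed point, which we call $\tilde{\mathscr R}(\tilde{\mathfrak{p}},y)$; it lies in $B_{\tilde E}(0,r)$ (not just the closed ball) because $\|\tilde{\mathscr R}\|\leq\tfrac12\|\tilde{\mathscr R}\|+\tfrac{r}{2}$ forces $\|\tilde{\mathscr R}\|\leq\tfrac{r}{2}<r$. This handles existence and uniqueness of the map.

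It remains to establish the Lipschitz regularity of $(\tilde{\mathfrak{p}},y)\mapsto\tilde{\mathscr R}(\tilde{\mathfrak{p}},y)$. The dependence on $y$ (for fixed $\tilde{\mathfrak{p}}$) is the standard Lipschitz-in-parameter statement for contractions: from $\tilde{\mathscr R}(\tilde{\mathfrak{p}},y_i)=\Phi_{\tilde{\mathfrak{p}},y_i}(\tilde{\mathscr R}(\tilde{\mathfrak{p}},y_i))$ and the fact that $\Phi_{\tilde{\mathfrak{p}},y_1}-\Phi_{\tilde{\mathfrak{p}},y_2}=A_{\tilde{\mathfrak{p}}}^{-1}(y_1-y_2)$ is constant in $x$, one gets $\|\tilde{\mathscr R}(\tilde{\mathfrak{p}},y_1)-\tilde{\mathscr R}(\tilde{\mathfrak{p}},y_2)\|\leq\tfrac12\|\tilde{\mathscr R}(\tilde{\mathfrak{p}},y_1)-\tilde{\mathscr R}(\tilde{\mathfrak{p}},y_2)\|+M\|y_1-y_2\|$, hence Lipschitz with constant $2M$. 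For the dependence on $\tilde{\mathfrak{p}}$, I would compare the two fixed point problems: for $\tilde{\mathfrak{p}}_1,\tilde{\mathfrak{p}}_2$ and a common $y$ in the intersection of the two admissible balls, write $x_i:=\tilde{\mathscr R}(\tilde{\mathfrak{p}}_i,y)$ and estimate $\|x_1-x_2\|=\|\Phi_{\tilde{\mathfrak{p}}_1,y}(x_1)-\Phi_{\tilde{\mathfrak{p}}_2,y}(x_2)\|\leq\|\Phi_{\tilde{\mathfrak{p}}_1,y}(x_1)-\Phi_{\tilde{\mathfrak{p}}_1,y}(x_2)\|+\|\Phi_{\tilde{\mathfrak{p}}_1,y}(x_2)-\Phi_{\tilde{\mathfrak{p}}_2,y}(x_2)\|$. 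The first term is $\leq\tfrac12\|x_1-x_2\|$, and the second is bounded using the Lipschitz hypotheses in (i): $\Phi_{\tilde{\mathfrak{p}},y}(x_2)=x_2-A_{\tilde{\mathfrak{p}}}^{-1}(f_{\tilde{\mathfrak{p}}}(x_2)-y)$ depends on $\tilde{\mathfrak{p}}$ through $\tilde{\mathfrak{p}}\mapsto A_{\tilde{\mathfrak{p}}}^{-1}$ (Lipschitz by (i), bounded by $M$) and $\tilde{\mathfrak{p}}\mapsto f_{\tilde{\mathfrak{p}}}(x_2)$ (Lipschitz by (i), bounded on $B_{\tilde E}(0,r)$ by a constant coming from the $C^1$ bound in (ii) together with $\|f_{\tilde{\mathfrak{p}}}(0)\|$, which is Lipschitz hence bounded on any bounded set — and if $\tilde F$ is not bounded one works locally, which is why only \emph{local} Lipschitzness is claimed). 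Absorbing the $\tfrac12\|x_1-x_2\|$ term yields the desired local Lipschitz bound, and combining with the Lipschitz-in-$y$ estimate gives joint local Lipschitz continuity on $\cup_{\tilde{\mathfrak{p}}}\{\tilde{\mathfrak{p}}\}\times B_{\tilde E}(f_{\tilde{\mathfrak{p}}}(0),\tfrac{r}{2M})$.

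One subtlety I would be careful about is that the domain of $\tilde{\mathscr R}$ is a "fibered" set whose fibers $B_{\tilde E}(f_{\tilde{\mathfrak{p}}}(0),\tfrac{r}{2M})$ move with $\tilde{\mathfrak{p}}$, so to compare $\tilde{\mathscr R}(\tilde{\mathfrak{p}}_1,y_1)$ with $\tilde{\mathscr R}(\tilde{\mathfrak{p}}_2,y_2)$ for nearby admissible $(\tilde{\mathfrak{p}}_1,y_1)$, $(\tilde{\mathfrak{p}}_2,y_2)$ one should either slightly shrink the radius so that a point admissible for $\tilde{\mathfrak{p}}_1$ stays admissible for nearby $\tilde{\mathfrak{p}}_2$ (using that $\tilde{\mathfrak{p}}\mapsto f_{\tilde{\mathfrak{p}}}(0)$ is Lipschitz), or simply state the Lipschitz property locally, which is all that is needed in the sequel. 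The main obstacle, such as it is, is purely bookkeeping: keeping the constants $r$, $M$ uniform in $\tilde{\mathfrak{p}}$ (guaranteed verbatim by hypothesis (ii)) and propagating the Lipschitz estimates of (i) through the fixed point map without introducing any dependence on $\tilde{\mathfrak{p}}$-dependent constants that are not already controlled. There is no deep difficulty here — the lemma is a soft quantitative inverse function theorem — but it is stated precisely so that it can be applied as a black box in the proof of Proposition~\ref{lem:geoalg}, where $f_{\tilde{\mathfrak{p}}}$ will be built from the quadratic map $Q$ plus the linear perturbation after a scaling.
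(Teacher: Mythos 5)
Your proof is correct and follows essentially the same route as the paper: the Newton-type map $\Phi_{\tilde{\mathfrak{p}},y}(x)=x+Df_{\tilde{\mathfrak{p}}}(0)^{-1}\bigl(y-f_{\tilde{\mathfrak{p}}}(x)\bigr)$ is shown, using (ii), to be a $\tfrac12$-contraction of the ball of radius $r$ into itself, Banach's fixed point theorem yields $\tilde{\mathscr R}$, and the Lipschitz dependence is obtained by absorbing the contraction term and invoking the Lipschitz hypotheses in (i), exactly as in the paper. Two minor remarks: the claim $\|\tilde{\mathscr R}\|\leq\tfrac r2$ is an arithmetic slip (from $\|\tilde{\mathscr R}\|\leq\tfrac12\|\tilde{\mathscr R}\|+\tfrac r2$ one only gets $\|\tilde{\mathscr R}\|\leq r$, and membership in the open ball $B_{\tilde E}(0,r)$ follows from the strict inequality $\|y-f_{\tilde{\mathfrak{p}}}(0)\|<\tfrac r{2M}$); and your retreat to \emph{local} Lipschitzness is unnecessary, since for admissible $y$ and $x$ in $B_{\tilde E}(0,r)$ the quantity $y-f_{\tilde{\mathfrak{p}}}(x)$ is uniformly bounded (by (i)--(ii)), so $(\tilde{\mathfrak{p}},y,x)\mapsto\Phi_{\tilde{\mathfrak{p}},y}(x)$ is jointly Lipschitz on the fibered domain and the same absorption argument applied directly to two arbitrary admissible pairs $(\tilde{\mathfrak{p}}_1,y_1)$, $(\tilde{\mathfrak{p}}_2,y_2)$ gives the global Lipschitz bound stated in the lemma, with no need to intersect fibers or shrink the radius --- this is how the paper concludes.
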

\begin{proof}[Proof of Lemma~\ref{repl}]
For $\tilde{\mathfrak{p}} $ in $ \tilde{F}$, $y$ in $ B_{\tilde{E}}\left(f_{\tilde{\mathfrak{p}}}(0),\frac{r}{2M}\right)$
and $x $ in $ B_{\tilde{E}}(0,r)$, we set 
\begin{align} \nonumber
g_{\tilde{\mathfrak{p}},y}(x) &:=x+Df_{\tilde{\mathfrak{p}}}(0)^{-1}\left(y-f_{\tilde{\mathfrak{p}}}(x) \right) 
\\ \label{louise} &= Df_{\tilde{\mathfrak{p}}}(0)^{-1}\left(y-f_{\tilde{\mathfrak{p}}}(0) \right) 
 - Df_{\tilde{\mathfrak{p}}}(0)^{-1}\left(  f_{\tilde{\mathfrak{p}}}(x) -f_{\tilde{\mathfrak{p}}}(0) - Df_{\tilde{\mathfrak{p}}}(0) x  \right).
\end{align}
Using \eqref{louise}, the triangle inequality, (ii) and
\begin{align}\label{eq:mvt}
f_{\tilde{\mathfrak{p}}}(x)-f_{\tilde{\mathfrak{p}}}(0)=\left(\int_0^1 Df_{\tilde{\mathfrak{p}}}(tx)\, dt\right)x, 
 \end{align}
 we  observe that
$$
g_{\tilde{\mathfrak{p}},y} (B_{\tilde{E}}(0,r)) \subset B_{\tilde{E}}(0,r).
$$
Similarly, for any $x_1,x_2$ in $ B_{\tilde{E}}(0,r)$, using again \eqref{louise} , (ii) and \eqref{eq:mvt}, we obtain: 
\begin{equation} \label{contract}
\|g_{\tilde{\mathfrak{p}},y}(x_1)-g_{\tilde{\mathfrak{p}},y}(x_2)\| \leq \frac{1}{2}\|x_1-x_2\|.
\end{equation}
Therefore, $g_{\tilde{\mathfrak{p}},y}$ is a contraction, and from the Banach fixed point theorem it follows that the mapping $g_{\tilde{\mathfrak{p}},y}$ has a unique fixed point in $B_{\tilde{E}}(0,r)$, which is also the unique solution of $f_{\tilde{\mathfrak{p}}}(\cdot)=y$. We consequently define $\tilde{\mathscr R}(\tilde{\mathfrak{p}},y)$ as this fixed point. \par

Now let $\tilde{\mathfrak{p}}_1$ and $\tilde{\mathfrak{p}}_2$ in $\tilde{F}$,
$$y_1 \in B_{\tilde{E}}\left(f_{\tilde{\mathfrak{p}}_1}(0),\frac{r}{2M}\right)
\,   \text{ and } \, 
y_2 \in B_{\tilde{E}}\left(f_{\tilde{\mathfrak{p}}_2}(0),\frac{r}{2M}\right) .
$$
By  the triangle inequality, using \eqref{contract} and
$$
\tilde{\mathscr R}(\tilde{\mathfrak{p}}_2,y_2) = g_{\tilde{\mathfrak{p}}_2,y_2}(\tilde{\mathscr R}(\tilde{\mathfrak{p}}_2,y_2)) \, 
\text{ and }  \,
\tilde{\mathscr R}(\tilde{\mathfrak{p}}_1,y_1) = g_{\tilde{\mathfrak{p}}_1,y_1}(\tilde{\mathscr R}(\tilde{\mathfrak{p}}_1,y_1)),
$$
 we obtain 
\begin{align*}
\| g_{\tilde{\mathfrak{p}}_1,y_1}(\tilde{\mathscr R}(\tilde{\mathfrak{p}}_1,y_1))-  g_{\tilde{\mathfrak{p}}_2,y_2}(\tilde{\mathscr R}(\tilde{\mathfrak{p}}_1,y_1))\| 
&\geq \|\tilde{\mathscr R}(\tilde{\mathfrak{p}}_2,y_2)-\tilde{\mathscr R}(\tilde{\mathfrak{p}}_1,y_1)\|
-\| g_{\tilde{\mathfrak{p}}_2,y_2}(\tilde{\mathscr R}(\tilde{\mathfrak{p}}_2,y_2))-  g_{\tilde{\mathfrak{p}}_2,y_2}(\tilde{\mathscr R}(\tilde{\mathfrak{p}}_1,y_1))\|  \\
& \geq \frac{1}{2}\|\tilde{\mathscr R}(\tilde{\mathfrak{p}}_2,y_2)-\tilde{\mathscr R}(\tilde{\mathfrak{p}}_1,y_1)\| .
\end{align*}
Now since the mapping $(\tilde{\mathfrak{p}},y,x)\mapsto g_{\tilde{\mathfrak{p}},y}(x)$ is Lipschitz due to (i), we deduce that $\tilde{\mathscr R}$ is Lipschitz.
% Preuve de la suite inutile et dangereuse : 
%Finally, it is classical  that 
% for any $\tilde{\mathfrak{p}}\in \tilde{F}$, the mapping $R(\tilde{\mathfrak{p}},\cdot)$  is  $C^1$ in a neighborhood of $ f_{\tilde{\mathfrak{p}}}(0)$ and its derivative is given by  $D R(\tilde{\mathfrak{p}},\cdot)(f_{\tilde{\mathfrak{p}}}(0)) =Df_{\tilde{\mathfrak{p}}}(0)^{-1}$.
This concludes the proof of Lemma~\ref{repl}.
\end{proof}
We can now start the proof of Proposition~\ref{lem:geoalg}.
\begin{proof}[Proof of Proposition~\ref{lem:geoalg}]
We begin by observing that for any $\mathfrak{p}$ in $ F$, there exist some linear isomorphisms $\varphi_\mathfrak{p}$ from $\R^d$ to 
the orthogonal of the kernel of $DQ_\mathfrak{p}(\overline{X}_\mathfrak{p})$,
such that the maps $F \ni {\mathfrak{p}}  \mapsto \varphi_{{\mathfrak{p}}}$  and  $F \ni {\mathfrak{p}} \mapsto \varphi_\mathfrak{{\mathfrak{p}}}^{-1}$ are Lipschitz and bounded (since $F$ is bounded).

The proof is of Proposition~\ref{lem:geoalg} is then based on a scaling argument. 
We introduce $\varepsilon_0>0$ such that for any $\varepsilon$ in $[0,\varepsilon_0]$, for any $\mathfrak{p}$ in $ F$: 
\begin{itemize}
\item[(a)] the linear operator 
$DQ_\mathfrak{p}(\overline{X}_\mathfrak{p})+\varepsilon L_\mathfrak{p} : E \longrightarrow E$
is right invertible, with some right inverses which are uniformly bounded as $(\varepsilon,\mathfrak{p})$ runs over $[0,\varepsilon_0]\times F$, 
\item[(b)] the linear isomorphism $\varphi_\mathfrak{p}$ also allows to select a right inverse of $DQ_\mathfrak{p}(\overline{X}_\mathfrak{p})+\varepsilon L_\mathfrak{p}$.
\end{itemize}
%
% $\Big(\ker (DQ_\mathfrak{p}(\overline{X}_\mathfrak{p})+\varepsilonL_\mathfrak{p})(\overline{X}_\mathfrak{p})\Big)^\perp$, 
%

We will further denote $\tilde{\mathfrak{p}}:=(\varepsilon,\mathfrak{p})$ and $\tilde{F}:= [0,\varepsilon_0]\times F$.
Then we consider the mapping $f_{\tilde{\mathfrak{p}}} $ from $ \R^d  $ to $ \R^d  $ which maps $x$ in $ \R^d  $ to 
$$
f_{\tilde{\mathfrak{p}}} (x) := (Q_\mathfrak{p}
+ \varepsilon L_\mathfrak{p})(\overline{X}_\mathfrak{p}+\varphi_{{\mathfrak{p}}} (x) ) \in \R^d .$$
Our goal is to apply Lemma \ref{repl} to the mapping $f_{\tilde{\mathfrak{p}}}  $ and to the space $\tilde{E}= \R^d$. \par
First, we see that the map $(\tilde{\mathfrak{p}},x) $ in $ \tilde{F} \times  \R^d \mapsto f_{\tilde{\mathfrak{p}}} (x) $ is  Lipschitz and for any $\tilde{\mathfrak{p}} $ in $ \tilde{F}$,
the map $x $ in $ \R^d \mapsto f_{\tilde{\mathfrak{p}}} (x) $ is $C^1$ in a neighborhood $\mathcal V$ of $0$ in $\R^d$ and for any $x$ in  $\mathcal V$, 
$$
Df_{\tilde{\mathfrak{p}}} (x) 
= (DQ_\mathfrak{p}+\varepsilon L_\mathfrak{p})(\overline{X}_\mathfrak{p} + \varphi_{{\mathfrak{p}}} (x) )
\circ \varphi_{{\mathfrak{p}}} .
$$
In particular, since $\varphi_{{\mathfrak{p}}} (0)=0$,
$$
Df_{\tilde{\mathfrak{p}}} (0) =
(DQ_\mathfrak{p}+\varepsilon L_\mathfrak{p})(\overline{X}_\mathfrak{p}  ) \circ \varphi_{{\mathfrak{p}}} ,
$$
is one to one and, using (b) from the above choice of $\varepsilon_0>0$, one can see that its inverse is given by 
\begin{equation*}
Df_{\tilde{\mathfrak{p}}} (0)^{-1}
 =  \varphi_{{\mathfrak{p}}}^{-1} \circ ((DQ_\mathfrak{p}+\varepsilon L_\mathfrak{p})(\overline{X}_\mathfrak{p}))^{-1} . 
\end{equation*}
Thus the map $\tilde{\mathfrak{p}} \mapsto Df_{\tilde{\mathfrak{p}}} (0)^{-1}$ is Lipschitz and bounded as the composition of bounded Lipschitz maps.
Therefore the assumption (i) of  Lemma~\ref{repl} is satisfied. 
 
Moreover for all $x_1,x_2$ in $\mathcal V$ and $\tilde{\mathfrak{p}}\in \tilde{F}$, we have
$$Df_{\tilde{\mathfrak{p}}} (x_1)  -  Df_{\tilde{\mathfrak{p}}} (x_2) 
= \Big((DQ_\mathfrak{p}+\varepsilon L_\mathfrak{p})(\overline{X}_\mathfrak{p} + \varphi_{{\mathfrak{p}}}(x_1) )
-  (DQ_\mathfrak{p}+\varepsilon L_\mathfrak{p})(\overline{X}_\mathfrak{p} + \varphi_{{\mathfrak{p}}} (x_2) ) \Big)
\circ \varphi_{{\mathfrak{p}}} .$$
Using that the mapping $F \times E \ni (\mathfrak{p},x) \longmapsto Q_\mathfrak{p} (x) \in \mathbb{R}^d $ is Lipschitz and that  $\varphi_{{\mathfrak{p}}} $ is linear continuous, we deduce that that the assumption (ii) of  Lemma \ref{repl} is satisfied. 

Hence we can apply Lemma \ref{repl} to $f_{\tilde{\mathfrak{p}}}$ and obtain the map $\tilde{\mathscr R}$.
Since $Q_\mathfrak{p}(\overline{X}_\mathfrak{p})=0$ and $\varphi_{{\mathfrak{p}}} (0) = 0$ (by linearity),
we have $f_{\tilde{\mathfrak{p}}} (0) = \varepsilon L_\mathfrak{p} (\overline{X}_\mathfrak{p})$. 
Therefore $f_{\tilde{\mathfrak{p}}} (0) $ converges to $0$ as $\varepsilon$ converges to $0$,
uniformly in $\tilde{\mathfrak{p}}\in \tilde{F} $. 
Then reducing $\varepsilon_0>0$ again if necessary, there exists $r>0$
such that the Lipschitz mapping
$$
\mathscr R:   [0,\varepsilon_0] \times F \times B_{\R^d}(0,r) \longrightarrow E,
\quad \quad
(\varepsilon,\mathfrak{p},y)  \longmapsto
\overline{X}_\mathfrak{p} +  \tilde{\mathscr R} (\varepsilon,\mathfrak{p},y) ,
$$
satisfies for any $\varepsilon$ in $[0,\varepsilon_0]$, 
for any $\mathfrak{p}$ in $F$, for any $y$ in $\mathbb{R}^{d}$ with $|y| < r$,
\begin{align}\label{eq:Reps}
(Q_\mathfrak{p}+\varepsilon L_\mathfrak{p} ) (\mathscr R (\varepsilon,\mathfrak{p},y)) = y  . 
\end{align}
Now we define  $R: F \times \mathbb{R}^d\to E$ by setting, for any $\mathfrak{p}$ in $F$ and
any $y$ in $\mathbb{R}^d$, 
$$
R(\mathfrak{p},y):=\frac{1}{\varepsilon (y)} \mathscr R (\varepsilon (y),\mathfrak{p},\varepsilon (y)^2 y)
\  \text{ where } \  \varepsilon (y):=\min\left\{\varepsilon_0,\frac{\sqrt{r}}{(1+|y|^2)^{1/4}}\right\} .
$$
Observe that this definition makes sense since, for any $y$ in $\mathbb{R}^{d}$,  $\varepsilon (y)$ is in $(0, \varepsilon_0 ]$ and $|\varepsilon(y)^2 y| <r$.
Moreover $R$ is a locally Lipschitz mapping as composition of locally Lipschitz mappings. 
Finally, using that the mapping $Q_\mathfrak{p}$ is quadratic, that the mapping 
$L_\mathfrak{p}$ is linear, and \eqref{eq:Reps}, we obtain that 
for any $\mathfrak{p}$ in $F$,
for any $y$ in $\mathbb{R}^d$, 
\begin{align*}
(Q_\mathfrak{p}+L_\mathfrak{p})(R(\mathfrak{p},y))=\frac{1}{\varepsilon (y)^2}(Q_\mathfrak{p}+\varepsilon (y) L_\mathfrak{p})(\mathscr R (\varepsilon (y),\mathfrak{p},\varepsilon (y)^2 y))=y .
\end{align*}
This concludes the proof of Proposition~\ref{lem:geoalg}.
\end{proof}
\subsection{Restriction of the quadratic mapping $\mathfrak{Q}(q)$ and determination of a particular non-trivial zero point}

We go back to the framework of Proposition \ref{prop-design}.
We first recall that $\mathfrak Q(q)[g]$ for $q$ in $ \mathcal{Q}$ and $g$ in $ \mathcal{C}_b(q)$ was defined in \eqref{QL} with $\alpha=\mathcal{A} [q,g]$ introduced in \eqref{DefAAlpha}. Accordingly we have
\begin{equation}\label{eq:frQ}
\mathfrak Q(q)[g]= \frac{1}{2} \left( \int_{\partial\mathcal{S}_\kappa(q)}
\left | \nabla \mathcal{A} [q,g] \right|^2 \partial_n \boldsymbol\varphi_\kappa(q,\cdot)
\, {\rm d}s \right)_{\kappa=1,\ldots,N}  \in \R^{3N}.
\end{equation}
The goal of this section is to associate with the operators $\mathfrak Q(q)$ a finite-dimensional subspace ${\mathcal E} \subset {\mathcal C}_{b}$ and, for each $q$, a point in ${\mathcal E}$ which is a non trivial  zero of $\mathfrak Q(q)_{|{\mathcal E}}$ at which the derivative is right-invertible. This will allow us to apply Proposition~\ref{lem:geoalg}.  Precisely, we show the following.
\begin{proposition} \label{prop:tempo}
Let $\delta>0$. 
There exists a finite dimensional subspace $\mathcal E\subset \mathcal C$ and Lipschitz mappings
$$
q \in\mathcal{Q}_\delta  \longmapsto g_i (q,\cdot) \in \mathcal{C}_b(q) \cap \mathcal E, \,   \text{ for } \,  1 \leqslant i \leqslant (3N+1)^2 ,
$$ 
such that the following holds. Define $Q_q:\mathbb{R}^{(3N+1)^2}   \to\mathbb{R}^{3N}$
the quadratic operator which maps $X := (X_i )_{1 \leqslant i \leqslant (3N+1)^2}$ to 
\begin{equation} \label{Q-FiniteD}
 Q_q(X) := \mathfrak Q(q) \left[ \sum_{i=1}^{(3N+1)^2} X_i \, g_i(q,\cdot) \right].
\end{equation}
Then there exists a Lipschitz map $q\in\mathcal{Q}_\delta \mapsto \overline{X}_q $ in $ \R^{(3N+1)^2}$ satisfying 
$$
\|\overline{X}_q\|=1 \ \text{ and } \ Q_q(\overline{X}_q)=0,
$$
and such that $D Q_q(\overline{X}_q)$, the derivative with respect to the second argument, is right-invertible with right inverses which depend on $q$ in a bounded Lipschitz way.
\end{proposition}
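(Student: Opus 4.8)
The plan is to build the finite-dimensional space $\mathcal{E}$ and the maps $g_i$ by exploiting that $\mathfrak{Q}(q)[g]$ is, via \eqref{eq:frQ}, a quadratic functional of $\nabla\mathcal{A}[q,g]$ restricted to the solid boundaries, and that $\mathcal{A}[q,g]$ is a harmonic function whose normal derivative on $\partial\Omega$ can be prescribed (up to the zero-flux constraint) within $\mathcal{C}$. The key structural observation is that $Q_q$ is built from the symmetric bilinear form $B_q(g,\tilde g) := \tfrac12\left(\int_{\partial\mathcal{S}_\kappa(q)}\nabla\mathcal{A}[q,g]\cdot\nabla\mathcal{A}[q,\tilde g]\,\partial_n\boldsymbol\varphi_\kappa(q,\cdot)\,{\rm d}s\right)_\kappa$, so that $DQ_q(\overline X_q)$ applied to a perturbation $\delta X$ equals $2B_q\big(\sum\overline X_{q,i}g_i,\sum\delta X_i g_i\big)$. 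Hence right-invertibility of $DQ_q(\overline X_q)$ amounts to showing that, from the chosen finite family $\{g_i\}$, one can realize all $3N$ components of $\R^{3N}$ by varying $\tilde g$ while $g=\sum\overline X_{q,i}g_i$ is held fixed and nonzero.

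First I would isolate, for each solid $\kappa$ and each coordinate $j\in\{1,2,3\}$, a control $g$ localized near the relevant part of $\Sigma$ so that $\nabla\mathcal{A}[q,g]$ is (approximately) concentrated near $\partial\mathcal{S}_\kappa$, using the unique continuation / density properties of harmonic functions: because $\Sigma$ is a nonempty open subset of $\partial\Omega$ and $\mathcal{F}(q)$ is connected, any harmonic function on a neighborhood of $\partial\mathcal{S}_\kappa$ can be approximated in $C^1$ near $\partial\mathcal{S}_\kappa$ by functions of the form $\mathcal{A}[q,g]$ with $g\in\mathcal{C}$ supported in $\Sigma$ — this is the Runge-type/complex-analytic argument alluded to in the introduction, applied to each connected component of $\mathcal{F}(q)\cup(\text{interior of }\mathcal{S}_\kappa)$. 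This lets me first choose, in a $q$-uniform way over the compact-modulo-$\delta$ set $\mathcal{Q}_\delta$ (using compactness of $\overline{\mathcal{Q}_\delta}$ in the appropriate quotient and continuity of all the shape-dependent operators), finitely many $g_i$ whose associated boundary integrals $\big(\int_{\partial\mathcal{S}_\kappa}\nabla\mathcal{A}[q,g_i]\cdot v\,\partial_n\boldsymbol\varphi_\kappa\,{\rm d}s\big)$ span enough directions; then I'd enforce membership in $\mathcal{C}_b(q)$ by subtracting off a fixed finite-dimensional correction (the constraint \eqref{DefCb} is $N$ linear conditions, so one enlarges the family and projects, keeping Lipschitz dependence on $q$). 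The count $(3N+1)^2$ is natural: one wants a distinguished nonzero "base" control plus, for each of the $3N$ output coordinates, a "direction" control, and allowing pairwise interactions gives the square; I would organize $\overline X_q$ so that $\sum\overline X_{q,i}g_i$ is a fixed nonzero harmonic potential $\alpha_0(q,\cdot)$ for which $\mathfrak{Q}(q)[\cdot]=0$ — e.g. choosing $\alpha_0$ so that $|\nabla\alpha_0|^2$ integrates to zero against each $\partial_n\boldsymbol\varphi_\kappa$, which is possible since $\int_{\partial\mathcal{S}_\kappa}\partial_n\boldsymbol\varphi_\kappa\,{\rm d}s$ relations and sign changes of $\partial_n\boldsymbol\varphi_\kappa$ allow cancellation — and then choosing the remaining $g_i$ so that $B_q(\alpha_0,\cdot)$ hits all of $\R^{3N}$.

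The Lipschitz dependence on $q$ is handled throughout by noting that $q\mapsto\mathcal{A}[q,\cdot]$, $q\mapsto\boldsymbol\varphi_\kappa(q,\cdot)$ and their shape derivatives are smooth on $\mathcal{Q}$ (standard elliptic regularity with respect to domain variation, as used already for $\mathfrak{F}$ via \cite{GLMS}), so $B_q$ depends smoothly on $q$; the zero $\overline X_q$ and the right inverse of $DQ_q(\overline X_q)$ are then obtained by a $q$-uniform implicit-function/linear-algebra argument on $\mathcal{Q}_\delta$, where $\delta$ guarantees the relevant non-degeneracy (injectivity of the spanning map) stays bounded below. The normalization $\|\overline X_q\|=1$ is just a rescaling, permissible since $\mathfrak{Q}(q)$ is homogeneous of degree $2$ so its zero set is a cone.

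The main obstacle I expect is the simultaneous surjectivity requirement at a \emph{fixed} nonzero base point: one must verify that there is a single choice of coefficients $\overline X_q$ (Lipschitz in $q$) for which both $Q_q(\overline X_q)=0$ and $DQ_q(\overline X_q)$ is onto $\R^{3N}$ — i.e. that the "zero of $\mathfrak Q$" one selects is a \emph{regular} zero of the restriction to $\mathcal{E}$. Proving surjectivity of $DQ_q(\overline X_q)=2B_q(\alpha_0,\cdot)$ requires showing the $3N$ linear functionals $\tilde g\mapsto\int_{\partial\mathcal{S}_\kappa(q)}\nabla\alpha_0\cdot\nabla\mathcal{A}[q,\tilde g]\,\partial_n\boldsymbol\varphi_\kappa\,{\rm d}s$ are independent as $\tilde g$ ranges over $\mathcal{E}\cap\mathcal{C}_b(q)$, which is where the harmonic-approximation/unique-continuation input is genuinely used and where one must be careful that imposing $\mathfrak{Q}(q)[\alpha_0]=0$ does not accidentally force $\nabla\alpha_0$ to vanish on some $\partial\mathcal{S}_\kappa$ (which would kill the corresponding functional); avoiding this — i.e. finding a base control whose gradient is nonvanishing on every solid boundary yet makes the quadratic form vanish — is the delicate point, and it is presumably why the dimension is taken generously as $(3N+1)^2$ rather than the minimal $3N+1$.
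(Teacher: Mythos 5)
Your scaffolding matches the paper's: the Runge-type approximation of harmonic functions defined near each $\partial\mathcal{S}_\kappa$ by potentials $\mathcal{A}[q,g]$ with $g$ supported in $\Sigma$ is indeed how the paper builds its elementary controls (Lemmas~\ref{LemBase} and~\ref{LemmeEta}), the membership in $\mathcal{C}_b(q)$ is indeed enforced afterwards by finite-dimensional linear algebra (the coefficients $\lambda^{i,j}(q)$ before \eqref{angou}), and the Lipschitz dependence on $q$ is indeed obtained by compactness of $\mathcal{Q}_\delta$ and a partition of unity (Lemma~\ref{Lem10}). But the heart of the proposition --- producing a \emph{single} Lipschitz selection $q\mapsto\overline X_q$ which is simultaneously a zero of $Q_q$ and a point where $DQ_q$ is right-invertible with a Lipschitz right inverse --- is exactly the step you leave open. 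You propose to pick a base potential $\alpha_0$ with $\mathfrak{Q}(q)[\alpha_0]=0$ ``since sign changes of $\partial_n\boldsymbol\varphi_\kappa$ allow cancellation'' and then to add controls making $B_q(\alpha_0,\cdot)$ surjective, while yourself flagging that nothing prevents $\nabla\alpha_0$ from degenerating on some $\partial\mathcal{S}_\kappa$ and that the two requirements might conflict. That is not a side issue one can defer: without a mechanism that quantitatively couples the choice of the zero to the non-degeneracy of the differential (uniformly and Lipschitz-continuously in $q$), the proposition is not proved.

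The paper's resolution is structurally different from your ``base point plus directions'' picture, and it is worth seeing why it closes the gap automatically. Using that the solids are not disks, the conical hull property \eqref{conjj} plus Carath\'eodory's theorem produce $(3N+1)^2$ boundary points and smooth positive weights $\mu_i(q,v)$ with $\sum_i\mu_i(q,v)e_i(q)=v$, where $e_i(q)=\bigl(\partial_n\boldsymbol\varphi_\kappa(q,x_i^\kappa(q))\bigr)_\kappa$; the count $(3N+1)^2$ is ``$(3N+1)$ positively spanning directions times $(3N+1)$ Carath\'eodory points each,'' not pairwise interactions of a base control with direction controls. The conformal-map/Laurent-series construction then makes the family $\{\mathcal{A}[q,g_i]\}$ approximately orthogonal for the weighted boundary forms, so that $Q_q(X)\approx\frac12\sum_i X_i^2\,e_i(q)$ with error $O(\nu)$ (see \eqref{angou}). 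Consequently the map $v\mapsto 2Q_q\bigl((\sqrt{\mu_i(q,v)})_i\bigr)$ is $C^1$-close to the identity, and inverting it at $0$ yields a zero $\overline X_q$ \emph{all of whose coordinates are strictly positive} \eqref{DefXBar}. This positivity is precisely what rules out the degeneracy you worry about: by \eqref{eq:niceDq}, $DQ_q(\overline X_q)$ sends the $i$-th coordinate direction to $\frac12\overline X_{q,i}e_i(q)+O(\nu)$, so surjectivity follows from the positive spanning property, and an explicit right inverse $A_q(v)_i=2\mu_i(q,v)/\overline X_{q,i}$, corrected by a Neumann series, is Lipschitz in $q$. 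In short, the regularity of the zero is not arranged by a clever separate choice of $\alpha_0$; it is an automatic by-product of finding the zero inside the open positive cone of an approximately diagonal quadratic form, and this is the idea missing from your proposal.
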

%
%
%
%
% From  \eqref{QL} and   \eqref{Q-FiniteD}  we  deduce that for $q\in \mathcal{Q}$,  for  $X := (X_i )_{1 \leqslant i \leqslant 3N+1)^2}$ in $\mathbb{R}^{(3N+1)^2} $,
%  %
% \begin{equation} \label{lienM}
% \mathfrak Q(q ) \left[ \sum_{i=1}^{(3N+1)^2} X_i g_i(q,\cdot)\right]  = Q_q(X) .
% \end{equation}
%
%
To prove Proposition~\ref{prop:tempo}, we extend the analysis performed in \cite{GKS} for a single solid to the case of several solids.
In particular we will use some arguments of complex analysis and convexity which are similar to the ones already used in \cite{GKS}.
We recall that the conical hull of $A\subset \mathbb R^d$ is defined as
\begin{align*}
\text{coni}(A):=\left\{\sum_{i=1}^{k} \lambda_i a_i,\ k \in \mathbb{N}^*, \ \lambda_i \geq 0, \ a_i\in A \right\} .
\end{align*}
\begin{proof}[Proof of Proposition~\ref{prop:tempo}]
First, we recall, see  \cite[Lemma~14]{GKS}, that if $\mathcal{S}_0 \subset \Omega$ is a bounded, closed, simply connected domain of $\R^2$ with smooth boundary, which is not a disk, then
$$
\text{coni}\{(n (x),(x-h_0)^\perp \cdot n(x)), \ x \in \partial \mathcal{S}_0 \}=\mathbb{R}^3,
$$
for any $h_0$ in $\R^2$, where $n (x)$ denotes the unit normal vector to $\mathcal{S}_0$.
Therefore, taking into account the boundary conditions of the Kirchhoff potentials, see Subsection~\ref{KP}, we deduce that for any $q_0$ in $ \mathcal{Q}_\delta$,
\begin{equation} \label{conjj}
\text{coni} \left\{ \left( \partial_n \boldsymbol\varphi_\kappa(q_0,x^\kappa)\right)_{\kappa=1,\ldots,N},\ \  (x^1,\ldots,x^N) \in \partial \mathcal{S}_1(q_0) \times \ldots \times \partial \mathcal{S}_N(q_0) \right\} = \mathbb{R}^{3N}.
\end{equation}
%
% By Carath\'eodory's theorem there are $3N+1$ vectors in $\mathbb{R}^{3N}$ such that the convex cone generated by these vectors is $\mathbb{R}^{3N}$. 
% Again, by Carath\'eodory's theorem each of this vector is the convex combination of  $3N+1$  elements of the set  $\left\{\left(\partial_n\boldsymbol\varphi_\kappa(q_0,x^\kappa)\right)_{\kappa=1,\ldots,N},\ x^\kappa\in\partial\mathcal{S}_\kappa(q_0) \right\}$. More precisely, we have the following lemma.
%
This allows to establish the following lemma. 
\begin{lemma} \label{lem:muireg}
Fix $q_{0} $ in $ {\mathcal Q}_{\delta}$.
For $1 \leqslant i \leqslant (3N+1)^2 $ and $\kappa=1,\ldots,N$,
there exists $x_i^\kappa$ in $\partial\mathcal{S}_\kappa(q_0)$  and positive smooth mapping $\tilde\mu_i:\mathbb{R}^{3N}\to\mathbb{R}$, such that
\begin{align} \label{eq:mui0}
\sum_{i=1}^{(3N+1)^2} \tilde\mu_i(v) \left( \partial_n \boldsymbol\varphi_\kappa(q_0,x^\kappa_i) \right)_{\kappa=1,\ldots,N} = v \ \text{ for all } v \in \mathbb{R}^{3N}.
\end{align}
\end{lemma}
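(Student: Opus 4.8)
The plan is to exploit the surjectivity statement \eqref{conjj}, namely that the conical hull of the vectors $\big(\partial_n \boldsymbol\varphi_\kappa(q_0,x^\kappa)\big)_{\kappa=1,\dots,N}$, as $(x^1,\dots,x^N)$ runs over $\partial\mathcal S_1(q_0)\times\cdots\times\partial\mathcal S_N(q_0)$, is all of $\mathbb R^{3N}$, and to upgrade this from ``every $v$ is \emph{some} nonnegative combination'' to ``every $v$ is a nonnegative combination with \emph{smoothly $v$-dependent} coefficients for a \emph{fixed finite} family of evaluation points''. First I would invoke the conical Carath\'eodory theorem: since $\mathrm{coni}$ of that set equals $\mathbb R^{3N}$, every $v$ is a nonnegative combination of at most $3N$ of the generating vectors; in particular by picking $3N$ linearly independent generators $w_1,\dots,w_{3N}$ (coming from points $y^\kappa_1,\dots,y^\kappa_{3N}$ on the solid boundaries) we get an open basis, and for $v$ near any prescribed direction the coefficients in that basis are smooth (linear-algebraic) functions of $v$, though not necessarily nonnegative.

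To fix the sign issue I would cover the unit sphere $S^{3N-1}$ by finitely many open cones, on each of which $v$ is a \emph{positive} combination of a fixed set of $\leq 3N$ generators: concretely, for each direction $\zeta\in S^{3N-1}$ choose $3N$ generators whose nonnegative hull contains a neighborhood of $\zeta$ in its interior (possible because the conical hull is all of $\mathbb R^{3N}$, hence has nonempty interior and one can perturb to strict positivity), extract a finite subcover, and obtain finitely many points and, on each cone, smooth positive coefficient functions. The number of distinct evaluation points used across the whole cover can be bounded; the bound $(3N+1)^2$ is exactly what one gets by a careful count (roughly: $3N+1$ cones suffice to cover the sphere in the relevant combinatorial sense, each using $3N+1$ points, cf. the single-body argument in \cite[proof around Lemma~14]{GKS}), so I would relabel the union of all these points as $x^\kappa_1,\dots,x^\kappa_{(3N+1)^2}$. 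Finally, glue the per-cone smooth positive coefficient functions into globally defined smooth positive functions $\tilde\mu_i:\mathbb R^{3N}\to\mathbb R$ using a smooth partition of unity subordinate to the cover (after rescaling radially so one works on the sphere and then extends $1$-homogeneously, or simply works on all of $\mathbb R^{3N}\setminus\{0\}$ and patches near $0$ harmlessly since $v=0$ forces the combination to be trivial — here one adds a small positive constant times a fixed representation of $0$, using that $0$ lies in the interior of the nonnegative hull of the full generating set, to keep all $\tilde\mu_i$ strictly positive); the identity \eqref{eq:mui0} then holds by construction, and positivity and smoothness of the $\tilde\mu_i$ are inherited from the partition of unity and the linear-algebraic coefficient formulas.

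The main obstacle I anticipate is the bookkeeping that simultaneously achieves three things at once: a \emph{fixed finite} set of evaluation points (independent of $v$), \emph{strict positivity} of all coefficient functions $\tilde\mu_i$ for every $v\in\mathbb R^{3N}$ (including the degenerate directions and $v=0$), and \emph{smoothness} in $v$ — the tension being that Carath\'eodory's theorem gives $v$-dependent supports, so one must pay for $v$-independence by enlarging the point set and then restore smoothness by partition of unity, all while not losing positivity at cone boundaries. The clean way to handle positivity globally is to note that because $\mathrm{coni}$ of the generators is the whole space, $0$ is an interior point of the convex hull of (a finite subset of) the generators, so one always has headroom to add a fixed strictly positive ``background'' combination summing to $0$; this makes every $\tilde\mu_i$ strictly positive without affecting \eqref{eq:mui0}. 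The rest — checking that finitely many points suffice and that the count is $\leq(3N+1)^2$ — is the direct analogue of the single-solid computation in \cite{GKS}, so I would carry it out by the same method, treating the product structure $\partial\mathcal S_1(q_0)\times\cdots\times\partial\mathcal S_N(q_0)$ coordinate block by coordinate block.
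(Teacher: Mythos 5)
Your overall strategy (cover the directions by cones, solve linearly on each cone, glue by a partition of unity, and add a strictly positive ``background'' representation of $0$ to force positivity) is genuinely different from the paper's argument, but as written it has a gap at its first step. You claim that for every direction $\zeta$ one can choose $3N$ linearly independent generators $\left(\partial_n\boldsymbol\varphi_\kappa(q_0,x^\kappa)\right)_{\kappa}$ whose simplicial cone contains a neighborhood of $\zeta$, ``possible because the conical hull is all of $\mathbb{R}^{3N}$''. That implication is false in general: in $\mathbb{R}^2$ take $A$ to be the closed upper unit half-circle together with the single point $(0,-1)$; then $\mathrm{coni}(A)=\mathbb{R}^2$, yet the direction $(0,-1)$ lies in the interior of no simplicial cone spanned by two elements of $A$. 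Since all we know about the set of normal-derivative vectors is \eqref{conjj}, such degeneracies cannot be excluded, so the cones you build need not cover the sphere. Repairing this forces you to take more than $3N$ generators per cone (so that $\zeta$ can sit in the interior), and then the coefficients are no longer given by inverting a matrix: you must additionally construct a smooth, strictly positive selection among non-unique conic representations, which is precisely the difficulty the lemma is about. A second, smaller issue is smoothness at $v=0$: a partition of unity subordinate to a cover of the sphere, extended $0$-homogeneously and multiplied by linear per-cone coefficients, produces $1$-homogeneous functions that are in general not differentiable at the origin, and adding a constant background does not restore differentiability there; your ``patches near $0$ harmlessly'' needs an actual construction (e.g.\ a cutoff in $|v|$ with a separate representation near $0$). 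Finally, the count $(3N+1)^2$ is asserted (``a careful count'') rather than derived.

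For contrast, the paper avoids all of this with an explicit global decomposition: writing $\mathbf{b}_{3N+1}:=-(1,\dots,1)$ and $\lambda_\ell(v):=v_\ell+\sqrt{1+|v|^2}$ for $\ell\leq 3N$, $\lambda_{3N+1}(v):=\sqrt{1+|v|^2}$, one has $v=\sum_{\ell=1}^{3N+1}\lambda_\ell(v)\,\mathbf{b}_\ell$ with coefficients that are smooth and strictly positive on all of $\mathbb{R}^{3N}$ (including $v=0$). Then, since \eqref{conjj} puts a sphere $S(0,r)$ inside the interior of the convex hull of the generator set, Carath\'eodory's theorem represents each of the $3N+1$ fixed vectors $r\mathbf{b}_\ell$ as a convex combination of $3N+1$ generators; composing gives \eqref{eq:mui0} with exactly $(3N+1)^2$ points, zero coefficients being removed by merging points and splitting weights. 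All $v$-dependence sits in the explicit $\lambda_\ell$, so smoothness, positivity and the count are immediate, with no covering, no partition of unity, and no special treatment of the origin. If you want to keep your route, the honest fix is essentially to import this positive-basis idea cone by cone, at which point you have reproduced the paper's proof.
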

\begin{proof}[Proof of Lemma~\ref{lem:muireg}]
We introduce the following notations: let $\{\mathbf{b}_1,\ldots,\mathbf{b}_{3N} \}$ the canonical orthonormal basis of $\R^{3N}$,
and $\mathbf{b}_{3N+1}:=-(1,\ldots,1)$;
let $\lambda_\ell(v): =v_\ell+\sqrt{1+|v|^2}$ for $\ell=1,\ldots,3N$, and $\lambda_{3N+1}(v):=\sqrt{1+|v|^2}$.
For any $v$ in $ \mathbb{R}^{3N}$, we have
\begin{equation} \label{RRR}
v=\sum_{\ell=1}^{3N+1}\lambda_\ell(v) \, \mathbf{b}_\ell .
\end{equation}
Now, thanks to  \eqref{conjj}, we see that for some radius $r>0$ the sphere $S(0,r)$ of $\R^{3N}$ is contained in the interior of the convex hull of
$$
\left\{ \left( \partial_n \boldsymbol\varphi_\kappa(q_0,x^\kappa) \right)_{\kappa=1,\ldots,N},
\ (x^1,\ldots,x^N) \in \partial \mathcal{S}_1(q_0) \times \ldots \times \partial \mathcal{S}_N(q_0)  \right\}.
$$
For any $\ell\in\{1,\ldots, 3N+1\}$, by Carath\'eodory's theorem there exist
points $x_{(\ell-1)(3N+1)+j}^\kappa$ in $\partial\mathcal{S}_\kappa(q_0)$ for $\kappa \in \{1,\ldots,N\}$ and $j$ in $\{1,\ldots,3N+1\}$ and scalars 
$\tilde{\lambda}_{(\ell-1)(3N+1)+j} \in [0,1)$
for $j$ in $\{1,\ldots,3N+1\}$ 
%$\left(\partial_n\boldsymbol\varphi_\kappa(q_0,x^\kappa_j)\right)_{\kappa=1,\ldots,N}\in \left\{\left(\partial_n\boldsymbol\varphi_\kappa(q_0,x^\kappa)\right)_{\kappa=1,\ldots,N},\ x^\kappa\in\partial\mathcal{S}_\kappa(q_0) \right\}$
 such that
\begin{align}\label{eq:cara}
r \mathbf{b}_\ell = \sum_{j=1}^{3N+1} \tilde{\lambda}_{(\ell-1)(3N+1)+j}
\left( \partial_n \boldsymbol \varphi_\kappa(q_0,x^\kappa_{(\ell-1)(3N+1)+j}) \right)_{\kappa=1,\ldots,N}.
\end{align}
We may exclude the possibility that some $\tilde{\lambda}_i$ is $0$ as follows: if for some $i$, $\tilde{\lambda}_i=0$, then we move the corresponding points $x^\kappa_i$ on another $x^\kappa_k$ for which $\tilde{\lambda}_k \neq 0$; then we split the value $\tilde{\lambda}_k$ between $\tilde{\lambda}_k$ and $\tilde{\lambda}_i$ so that no coefficient $\tilde{\lambda}_i$ vanishes. We consider this to be the case from now on. \par
Combining \eqref{RRR} with \eqref{eq:cara}, we arrive at \eqref{eq:mui0} with for any $v$ in $ \mathbb{R}^{3N}$ and for $i=1,\ldots,(3N+1)^2$,
$$\tilde{\mu}_i(v):= \frac{1}{r} \tilde{\lambda}_i\lambda_{1+ i/(3N+1)}(v),$$
where $i/(3N+1)$ denotes the quotient of the Euclidean division of $i$ by $3N+1$.
It follows that the mappings $\tilde\mu_i$ are smooth with positive  values on $ \mathbb{R}^{3N}$. This ends the proof of Lemma~\ref{lem:muireg}.
\end{proof}
%
%
%  Going back to the proof of Proposition~\ref{prop:tempo},  ??
Now given the points $x_{i}^\kappa$ of Lemma~\ref{lem:muireg}, let us denote
$$
x_i^\kappa(q)= R(\theta_\kappa)(x^\kappa_i-h_{\kappa,0})+h_\kappa \in \partial\mathcal{S}_\kappa(q).
$$
%
%{\tt \color{blue} [NdOG: c'est bien $\partial\mathcal{S}_\kappa(q)$ et non $\partial\mathcal{S}_\kappa(q_{0})]$ comme indiqu\'e auparavant~?]} \\
%
We consider the $3 \times 3$ and $3N \times 3N$ rotation matrices
\begin{gather*}
\mathcal{R}_\kappa(q)=\left( \begin{array}{ccc}
R(\vartheta_\kappa)  & 0\\
0 & 1 
\end{array} \right)\in\mathbb R^{3\times 3} \ \text{ for } \kappa=1,\ldots,N, \\
\mathcal{R}(q)=\left( \begin{array}{cccc}
\mathcal{R}_1(q)  & 0 & \ldots & 0\\
0 & \mathcal{R}_2(q)   & \ldots & 0\\
\vdots & \vdots & \ddots & \vdots\\
0 & 0 & \ldots & \mathcal{R}_N(q)
\end{array} \right)\in\mathbb R^{3N\times 3N}.
\end{gather*}
Recalling the definition of $\partial_n \boldsymbol\varphi_\kappa(q,\cdot)$, we find
\begin{align} \label{eq:mui}
\sum_{i=1}^{(3N+1)^2} \tilde\mu_i(\mathcal{R}(q)^{-1}v) \left( \partial_n \boldsymbol\varphi_\kappa(q,x^\kappa_i(q)) \right)_{\kappa=1,\ldots,N} =v,  \text{ for all }v\in\mathbb{R}^{3N},\ q \in \mathcal{Q}_\delta.
\end{align}
For $1 \leqslant i \leqslant (3N+1)^2 $ and $q $ in $\mathcal{Q}_\delta$,  we set 
\begin{equation} \label{def-e-mu}
 e_i (q):=\left(\partial_n \boldsymbol\varphi_1 (q, x^1_i (q)),\ldots,\partial_n \boldsymbol\varphi_N (q, x^N_i (q))\right) \ \text{ and } \ \mu_i(q,v):=\tilde\mu_i(\mathcal{R}(q)^{-1}v) ,
\end{equation}
so that, for any $q $ in $\mathcal{Q}_\delta$ and $v$ in $\mathbb{R}^{3N}$, 
\begin{align}\label{eq:muii}
\sum_{i=1}^{(3N+1)^2}\mu_i(q,v)e_i (q)=v .
\end{align}
Now the following lemma  is the adaptation of \cite[Lemma 10]{GKS} to the case of several rigid bodies. 
\begin{lemma} \label{LemBase}
Let $\kappa $ in $ \{1, \ldots, N \}$. Given $q $ in $\mathcal{Q}_\delta$, there exists a family of functions
$$
(\tilde{\alpha}_{\kappa,\varepsilon}^{i,j}(q,\cdot))_{\varepsilon \in (0,1)} 
\,  \text{ for } \,  1 \leqslant i \leqslant (3N+1)^2 \, ,  \, 1 \leqslant j \leqslant 3N+1
 \, \text{ and } \, \varepsilon \in (0,1),
$$
which are defined and harmonic in a closed neighbourhood $ \mathcal{V}_{\kappa,\varepsilon}^{i,j}$ of $\partial\mathcal{S}_\kappa(q)$,
satisfy $\partial_{n} \tilde{\alpha}_{\kappa,\varepsilon}^{i,j} (q,\cdot)=0$ on $\partial\mathcal{S}_\kappa(q)$,
and moreover, for any  $1 \leqslant i,k \leqslant (3N+1)^2 $, for any  $1 \leqslant j,l \leqslant 3N+1$, 
$$
\left|
\int_{\partial\mathcal{S}_\kappa(q)} \nabla\tilde{\alpha}_{\kappa,\varepsilon}^{i,j}(q,\cdot) \cdot \nabla\tilde{\alpha}_{\kappa,\varepsilon}^{k,l}(q,\cdot) \partial_n\boldsymbol\varphi_k(q,\cdot) {\rm d}s
- \delta_{(i,j),(k,l)} \,  \partial_n \boldsymbol\varphi_\kappa (q, x^\kappa_i (q)) \right| 
\leq \varepsilon.
$$
\end{lemma}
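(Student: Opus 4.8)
The plan is to construct the functions $\tilde\alpha_{\kappa,\varepsilon}^{i,j}$ one body at a time, working in a neighbourhood of the single boundary $\partial\mathcal S_\kappa(q)$, exactly as in the single-body argument of \cite[Lemma 10]{GKS}; the only new feature is that the target in the right-hand side is the $3N$-vector $\partial_n\boldsymbol\varphi_\kappa(q,x_i^\kappa(q))$ rather than a scalar, but this causes no essential difficulty since the construction is still local in space. First I would fix $\kappa$ and $q$, and fix the points $x_i^\kappa(q)\in\partial\mathcal S_\kappa(q)$ coming from Lemma~\ref{lem:muireg}. The idea is to build, for each pair $(i,j)$, a harmonic function near $\partial\mathcal S_\kappa(q)$ whose gradient is, up to an error controlled by $\varepsilon$, concentrated near the point $x_i^\kappa(q)$ and of prescribed (small) amplitude, so that in the bilinear expression $\int_{\partial\mathcal S_\kappa}\nabla\tilde\alpha^{i,j}\cdot\nabla\tilde\alpha^{k,l}\,\partial_n\boldsymbol\varphi_k\,{\rm d}s$ only the diagonal term $(i,j)=(k,l)$ survives in the limit, and there it picks up the value $\partial_n\boldsymbol\varphi_\kappa(q,x_i^\kappa(q))$.

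The concrete construction proceeds via complex analysis. I would use a conformal map sending a neighbourhood of $\partial\mathcal S_\kappa(q)$ to an annular neighbourhood of the unit circle (the hypothesis that $\mathcal S_{\kappa,0}$ is not a disk is not needed here, only simple connectedness and smoothness of the boundary, which give a smooth conformal parametrization of a collar of $\partial\mathcal S_\kappa(q)$). Writing the harmonic function as the real part of a holomorphic function, the Neumann condition $\partial_n\tilde\alpha_{\kappa,\varepsilon}^{i,j}=0$ on $\partial\mathcal S_\kappa(q)$ becomes the condition that the holomorphic primitive be real-valued on the image circle, i.e. that its Taylor/Laurent coefficients have a prescribed symmetry. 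One then takes holomorphic functions whose derivative, transported back, is a highly oscillatory or highly peaked profile localized near $x_i^\kappa(q)$ — for instance an approximation of a Dirac-type bump along the boundary with a rescaling in $\varepsilon$ — normalized so that the localized $L^2$ mass of $\nabla\tilde\alpha_{\kappa,\varepsilon}^{i,j}$ along $\partial\mathcal S_\kappa(q)$ is exactly what is needed to produce $\partial_n\boldsymbol\varphi_\kappa(q,x_i^\kappa(q))$ in the diagonal integral. Since the indices $j$ range over $\{1,\dots,3N+1\}$ one can additionally use orthogonality of distinct Fourier modes (or disjointly supported bumps) to kill the cross terms with the same $i$ but $j\neq l$, while cross terms with $i\neq k$ vanish in the limit because the supports concentrate at the distinct points $x_i^\kappa(q)\neq x_k^\kappa(q)$ (one may assume the points distinct, splitting coefficients as in Lemma~\ref{lem:muireg} if necessary).

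The main step, and the place where care is required, is the error estimate: one must show that the off-diagonal and remainder contributions in $\int_{\partial\mathcal S_\kappa(q)}\nabla\tilde\alpha_{\kappa,\varepsilon}^{i,j}\cdot\nabla\tilde\alpha_{\kappa,\varepsilon}^{k,l}\,\partial_n\boldsymbol\varphi_k(q,\cdot)\,{\rm d}s$ tend to $0$ as $\varepsilon\to0$ and that the diagonal term converges to $\partial_n\boldsymbol\varphi_\kappa(q,x_i^\kappa(q))$. This is a concentration/oscillation argument: continuity of $\partial_n\boldsymbol\varphi_\kappa(q,\cdot)$ along $\partial\mathcal S_\kappa(q)$ lets one replace it by its value at $x_i^\kappa(q)$ on the shrinking support, up to an error going to $0$; the oscillatory part of the profile makes integrals of products of gradients at separated frequencies small by Riemann--Lebesgue-type cancellation; and the conformal change of variables introduces only smooth, bounded Jacobian factors that do not spoil these estimates. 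Finally I would remark that the whole construction depends only on the shape of $\mathcal F(q)$ near $\partial\mathcal S_\kappa(q)$ and on a fixed finite family of points, so that the neighbourhoods $\mathcal V_{\kappa,\varepsilon}^{i,j}$ and the functions can be taken to depend on $q\in\mathcal Q_\delta$ in the requisite way; this $q$-regularity, needed downstream for the Lipschitz dependence in Proposition~\ref{prop:tempo}, follows from smooth dependence of conformal maps and shape derivatives on the domain, as in \cite{GKS}.
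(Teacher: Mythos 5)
Your proposal follows essentially the same route as the paper's proof: an exterior conformal map onto the complement of the unit disk, harmonic functions with exactly vanishing normal derivative built from symmetric truncated Laurent series, and boundary profiles with shrinking, mutually disjoint supports whose weighted $L^2$ mass concentrates at the points $x_i^\kappa(q)$, so that the cross terms vanish and the diagonal terms converge to $\partial_n \boldsymbol\varphi_\kappa (q, x^\kappa_i (q))$ using continuity of $\partial_n \boldsymbol\varphi_\kappa$ and the smooth conformal Jacobian. One small adjustment: the points $x_i^\kappa(q)$ coming from Lemma~\ref{lem:muireg} need not be pairwise distinct, so the cross terms with $i\neq k$ should also be handled by disjointness of the supports (as the paper does for all index pairs, additionally imposing zero mean on each profile so that it is an admissible tangential derivative of a single-valued boundary trace), rather than by separation of the concentration points.
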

\begin{proof}[Proof of Lemma~\ref{LemBase}]

We deduce from the Riemann mapping theorem the existence of a conformal map
$\Psi_\kappa: \overline{\mathbb{C}} \setminus B(0,1) \to \overline{\mathbb{C}} \setminus \mathcal{S}_\kappa(q)$, where $\overline{\mathbb{C}}$ is the Riemann sphere. Classically, this conformal map is smooth up to the boundary thanks to the regularity of $\partial\mathcal{S}_\kappa(q)$.
For any smooth function $\alpha:\partial\mathcal{S}_\kappa(q)\to\mathbb{R}$, the Cauchy-Riemann relations imply that
for any $x$ in $\partial B(0,1)$, 
\begin{align*}
\begin{split}
\partial_{n} \alpha(\Psi_\kappa(x)) &= \frac{1}{\sqrt{|\text{det}(D\Psi_\kappa(x))|}} \partial_{n_B} (\alpha\circ\Psi_\kappa )(x),\\
\int_{\partial\mathcal{S}_\kappa(q)} |\nabla\alpha(x)|^2 \, \partial_n \boldsymbol\varphi_\kappa(q,x) \, {\rm d}s &= \int_{\partial B(0,1)} |\nabla\alpha(\Psi_\kappa(x))|^2 \, \partial_{n_B} \boldsymbol\varphi_\kappa(q,\Psi_\kappa(x)) \, \frac{1}{\sqrt{|\text{det}(D\Psi_\kappa(x))|}} \, {\rm d}s,
\end{split}
\end{align*}
where $n$ and $n_B$ respectively denote the normal vectors on $\partial\mathcal{S}_\kappa(q)$ and $\partial B(0,1)$. Since $\Psi_\kappa$ is invertible, we have $|\text{det}(D\Psi_\kappa(x))|>0$, for any $x$ in $\partial B(0,1)$. \par
We consider the parameterizations
$$\{c(s)=(\cos(s),\sin(s)),\ s\in [0,2\pi]\} \, \text{ of } \, \partial B(0,1)
\ \text{ and } \ 
\{ \Psi_\kappa(c(s)),\ s\in [0,2\pi]\} \, \text{ of } \, \partial \mathcal{S}_\kappa(q),$$
and the corresponding values $s_i$ such that $x_i^\kappa (q) = \Psi_\kappa(c(s_i))$, for $1 \leqslant i \leqslant (3N+1)^2$. 
We introduce a family of smooth functions $\beta_{\rho}^{i,j}: [0,2\pi] \to \mathbb{R}$ 
defined for $\rho>0$, $1 \leqslant i \leqslant (3N+1)^2 $ and $ j$ in $\{1,\ldots,3N+1\}$, satisfying:
$$\text{supp }\beta_{\rho}^{i,j}\cap\text{supp }\beta_{\rho}^{k,l}=\emptyset  \,   \text{ for }  \,  (i,j)\neq(k,l), \quad
\int_{0}^{2\pi}\beta_{\rho}^{i,j}(s)  \,  ds=0,$$
and such that, as $\rho\to0^{+}$, $\text{diam}\left(\text{supp }\beta_{\rho}^{i,j}\right)\to 0$ and 
$$
\int_{0}^{2\pi}|\beta_{\rho}^{i,j}(s)|^2 \, \partial_n \boldsymbol\varphi_\kappa(q,c(s)) \frac{1}{\sqrt{|\text{det}(D\Psi_\kappa(c(s)))|}}  \,\, ds \longrightarrow
\frac{1}{\sqrt{|\text{det}(D\Psi_\kappa(c(s_i)))|}} \partial_n \boldsymbol\varphi_\kappa (q, \Psi_\kappa(c(s_i)) ) .
$$
We denote $\hat{a}_{k,\rho}^{i,j}$ and $\hat{b}_{k,\rho}^{i,j}$ the $k$-th Fourier coefficients of the function $\beta_{\rho}^{i,j} $. \par
Now for suitable $\rho$ and $K$, one then defines $\tilde{\alpha}_{\kappa,\varepsilon}^{i,j}(q,\cdot)$ as the truncated Laurent series:
$$
\frac12 \sum_{0<k\leq K} \frac{1}{k} \left(r^k+\frac{1}{r^k}\right)( -\hat{b}_{k,\rho}^{i,j} \cos(k\theta) +\hat{a}_{k,\rho}^{i,j}  \sin(k\theta)) , $$
composed with $\Psi_\kappa^{-1}$. 
Choosing first $\rho>0$ small enough, and then $K \in \N$ large enough, it is easy to check that this family satisfies the required properties. This ends the proof of Lemma~\ref{LemBase}.
\end{proof}
The following result of approximation of
the functions $\tilde{\alpha}_{\kappa,\varepsilon}^{i,j}(q,\cdot)$ given 
by Lemma~\ref{LemBase} is close to \cite[p. 147-149]{Cortona} and \cite{OG-Addendum}. Recall the definition of $ \mathcal C$ in \eqref{def-Cpasb}. 
\begin{lemma} \label{LemmeEta}
For fixed  $k$ in $\mathbb N$, $\varepsilon>0$, and for any $\kappa$ in $\{1,\ldots,N\}$, $q$ in $\mathcal{Q}_\delta$, there exists a family of functions
$$(g_{\kappa,\eta}^{i,j}(q,\cdot))_{\eta \in (0,1)} \in  \mathcal C,  \, \text{ for } \, 1 \leqslant i \leqslant (3N+1)^2  \, \text{ and }  \,  1 \leqslant j \leqslant 3N+1 ,$$ 
with for any $\bar\kappa $ in $ \{1,\ldots,N\}$
\begin{equation} \label{del2new}
\left\|  \mathcal{A}[q,g_{\kappa,\eta}^{i,j}(q,\cdot)]
- \delta_{\kappa,\bar\kappa} \tilde{\alpha}_{\bar\kappa,\varepsilon}^{i,j}(q,\cdot) 
\right\|_{C^k(\mathcal{V}_{\bar\kappa,\varepsilon}^{i,j}\cap\overline{\mathcal{F}(q)})} \leq \eta .
\end{equation}
\end{lemma}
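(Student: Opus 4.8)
\textbf{Proof plan for Lemma~\ref{LemmeEta}.}
The statement is a runge-type approximation: the functions $\tilde{\alpha}_{\bar\kappa,\varepsilon}^{i,j}(q,\cdot)$ are harmonic only in a neighbourhood $\mathcal{V}_{\bar\kappa,\varepsilon}^{i,j}$ of $\partial\mathcal{S}_{\bar\kappa}(q)$, with vanishing normal derivative on $\partial\mathcal{S}_{\bar\kappa}(q)$, and we must approximate them in $C^k$ on $\mathcal{V}_{\bar\kappa,\varepsilon}^{i,j}\cap\overline{\mathcal{F}(q)}$ by the traces of solutions $\mathcal{A}[q,g]$ of the global Neumann problem \eqref{pot}, that is, by harmonic functions on all of $\mathcal{F}(q)$ with normal derivative supported in $\Sigma\subset\partial\Omega$. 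The plan is to follow the complex-analytic argument of \cite[p.~147-149]{Cortona} and \cite{OG-Addendum}: first reduce the Neumann data to a single small disk near $\partial\mathcal{S}_{\bar\kappa}(q)$, then use a harmonic conjugate to pass to holomorphic functions, then apply Runge's theorem on rational approximation with prescribed poles, and finally control the location of the poles so as to realize the approximant as $\mathcal{A}[q,g]$ for some $g\in\mathcal{C}$.

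\textbf{Main steps.} First I would fix $\bar\kappa=\kappa$ (the case $\bar\kappa\neq\kappa$ requires merely that the corresponding potential be uniformly small on $\mathcal{V}_{\bar\kappa,\varepsilon}^{i,j}$, which will follow from the same construction by choosing poles far from the other bodies). Since $\tilde{\alpha}_{\kappa,\varepsilon}^{i,j}(q,\cdot)$ is harmonic on $\mathcal{V}_{\kappa,\varepsilon}^{i,j}$ with zero normal derivative on $\partial\mathcal{S}_\kappa(q)$, one can reflect it across $\partial\mathcal{S}_\kappa(q)$ (or simply work in a slightly shrunk neighbourhood $\mathcal V'\Subset\mathcal{V}_{\kappa,\varepsilon}^{i,j}$ which still contains $\partial\mathcal{S}_\kappa(q)$ and is contained in the region where it is harmonic) and introduce a local harmonic conjugate, obtaining a holomorphic function $\Phi$ on a neighbourhood of the compact set $L:=\mathcal V'\cap\overline{\mathcal{F}(q)}$. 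Since $\overline{\mathcal F(q)}$ is connected and its complement in $\mathbb{C}$ has the bounded components $\mathcal{S}_1(q),\dots,\mathcal{S}_N(q)$, Runge's theorem (with poles) gives a rational function $\widehat\Phi$ with all poles in $\bigcup_\nu\mathcal{S}_\nu(q)\cup\{\infty\}$ that approximates $\Phi$ in $C^k(L)$ to within $\eta$. Taking real parts yields a harmonic function $\widehat\alpha=\operatorname{Re}\widehat\Phi$ on $\mathcal{F}(q)$, approximating $\tilde{\alpha}_{\kappa,\varepsilon}^{i,j}(q,\cdot)$ in $C^k(L)$. The last point is to identify $\widehat\alpha$ with $\mathcal{A}[q,g]$: $\widehat\alpha$ is harmonic on $\mathcal{F}(q)$, hence its normal derivative $\partial_n\widehat\alpha$ on $\partial\mathcal{F}(q)$ is some smooth function; however it is not supported in $\Sigma$ and need not have the right flux balance. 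One corrects this by subtracting from $\widehat\alpha$ the (unique up to a constant) harmonic function solving the Neumann problem with data $\partial_n\widehat\alpha$ on $\partial\Omega\setminus\Sigma$ and on each $\partial\mathcal S_\nu(q)$, extended to a function supported off $L$; more precisely, one moves the poles of $\widehat\Phi$ slightly and uses that the rational approximant can be chosen with its singular behaviour absorbed so that $\partial_n\widehat\alpha$ vanishes identically on $\partial\mathcal{S}_\nu(q)$ for all $\nu$ and on $\partial\Omega\setminus\Sigma$, while $\int_\Sigma\partial_n\widehat\alpha\,{\rm d}s=0$ automatically by the divergence theorem. Then $g:=(\partial_n\widehat\alpha)|_\Sigma\in\mathcal{C}$ and $\mathcal{A}[q,g]=\widehat\alpha$ up to an additive constant, which is harmless in \eqref{del2new} after adjusting the constant; set $g_{\kappa,\eta}^{i,j}(q,\cdot):=g$.

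\textbf{The main obstacle.} The delicate point is not the Runge approximation itself but ensuring that the approximant genuinely comes from a control $g$ living in $\mathcal{C}=C_0^\infty(\Sigma)$ with zero flux, i.e.\ that the normal derivative of the constructed harmonic function vanishes on \emph{all} of $\partial\Omega\setminus\Sigma$ and on all the solid boundaries $\partial\mathcal{S}_\nu(q)$, and is compactly supported inside the open set $\Sigma$. This is exactly the difficulty resolved in \cite{Cortona,OG-Addendum}: one first approximates by a harmonic function whose normal trace is supported on a small closed arc of $\partial\Omega$, then slightly shrinks the arc and regularizes to land in $C_0^\infty(\Sigma)$, at each stage re-approximating to keep the $C^k(L)$ error below $\eta$. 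A second, more routine obstacle is the uniformity in $q\in\mathcal{Q}_\delta$: since $\mathcal{Q}_\delta$ is not compact the construction is carried out pointwise in $q$ here (the lemma only asserts existence for each fixed $q$), so no uniformity is needed at this stage — the Lipschitz dependence on $q$ will be recovered afterwards by a partition-of-unity / gluing argument when the finite-dimensional space $\mathcal{E}$ is assembled in the proof of Proposition~\ref{prop:tempo}.
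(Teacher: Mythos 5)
Your overall strategy (Runge approximation with poles in the solids, then a correction to realize the approximant as $\mathcal{A}[q,g]$ with $g$ supported in $\Sigma$) is the right family of ideas, and your remark that the Lipschitz dependence on $q$ is deferred to the partition-of-unity step is consistent with the paper. But there is a genuine gap at the decisive step. You only approximate on the compact set $L=\mathcal V'\cap\overline{\mathcal F(q)}$, a neighbourhood of $\partial\mathcal S_\kappa(q)$ alone. Runge's theorem then gives no control whatsoever on the size of $\widehat\Phi$ (hence of $\nabla\widehat\alpha$) near the other solids $\partial\mathcal S_\nu(q)$, $\nu\neq\kappa$, or near $\partial\Omega\setminus\Sigma$; placing the poles ``far from the other bodies'' does not make the approximant small there. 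Consequently (a) the estimate \eqref{del2new} for $\bar\kappa\neq\kappa$, which demands that $\mathcal{A}[q,g]$ be $\eta$-small on $\mathcal V_{\bar\kappa,\varepsilon}^{i,j}\cap\overline{\mathcal F(q)}$, is not obtained, and (b) your correction step breaks down: the Neumann data $\partial_n\widehat\alpha$ on $\partial\Omega\setminus\Sigma$ and on the solid boundaries need not be small, so subtracting the corrector can destroy the approximation on $L$. Your alternative suggestion --- moving the poles so that $\partial_n\widehat\alpha$ \emph{vanishes identically} on all $\partial\mathcal S_\nu(q)$ and on $\partial\Omega\setminus\Sigma$ --- is not feasible: one cannot impose exact Neumann conditions on prescribed curves for real parts of rational functions, and nothing in Runge's theorem provides it.

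The paper's proof fixes exactly this point by enlarging the set on which one approximates: it extends the target holomorphically by zero to a neighbourhood $V$ of $\partial\Omega\setminus\Sigma$ and to the neighbourhoods $\mathcal V_{\bar\kappa,\varepsilon}^{i,j}$ of \emph{all} the other solids (working with the complex derivative $f^{i,j}=\partial_{x_1}\tilde\alpha-i\,\partial_{x_2}\tilde\alpha$ rather than a conjugate), and applies Runge on that union with one pole in each $\mathcal S_{\bar\kappa}(q)$ and one outside $\overline\Omega$. Then the approximant's gradient is $\mathcal O(\eta)$ near all boundaries except $\Sigma$, which makes the normal data off $\Sigma$ small; after handling an $\mathcal O(\eta)$ circulation defect (the curl-free approximant need not be exact on the multiply connected $\mathcal F(q)$, so small harmonic fields are subtracted), the leftover normal trace off $\Sigma$ is removed by an auxiliary Neumann problem whose data, being $\mathcal O(\eta)$, yields a corrector of size $\mathcal O(\eta)$ in $C^{k,1/2}$. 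Incidentally, since you approximate the potential (via a conjugate, which is indeed single valued on the annular neighbourhood because $\partial_n\tilde\alpha=0$ on $\partial\mathcal S_\kappa$ gives zero flux) rather than the gradient, your variant would avoid the circulation correction --- but only if you also adopt the essential ingredient above, namely approximating on a compact set containing neighbourhoods of all the solid boundaries and of $\partial\Omega\setminus\Sigma$ with target zero away from $\mathcal S_\kappa$.
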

\begin{proof}[Proof of Lemma~\ref{LemmeEta}]
Let  $k$ in $\mathbb N$, $\varepsilon>0$,  $\kappa$ in $\{1,\ldots,N\}$, $1 \leqslant i \leqslant (3N+1)^2 $,  $1 \leqslant j \leqslant 3N+1$ and $q$ in $\mathcal{Q}_\delta$. 
We approximate $\tilde{\alpha}_{\kappa,\varepsilon}^{i,j}(q,\cdot)$ by a function defined on $\mathcal{F}(q)$ using Runge's theorem.
Namely, we first introduce a neighbourhood $V$ of $\partial\Omega\setminus\Sigma$, disjoint from $\mathcal{V}_{\bar\kappa,\varepsilon}^{i,j}(q)$ for any $\bar\kappa$ in $\{1,\ldots,N\}$.
Next we define the holomorphic function $f^{i,j}$ on the set 
$$
V\cup\left(\bigcup_{\bar\kappa=1}^N\mathcal{V}_{\bar\kappa,\varepsilon}^{i,j} \right),
$$
by
$$
f^{i,j}
= \partial_{x_{1}} \tilde{\alpha}_{\kappa,\varepsilon}^{i,j}(q,\cdot) - i \partial_{x_{2}} \tilde{\alpha}_{\kappa,\varepsilon}^{i,j}(q,\cdot) \text{ on } \mathcal{V}_{\kappa,\varepsilon}^{i,j}, 
\, \text{ and } \, 
f^{i,j}=0  \text{ on } V\cup\left(\bigcup_{\bar\kappa\neq\kappa}\mathcal{V}_{\bar\kappa,\varepsilon}^{i,j} \right).
$$
For each $\eta>0$, there exists a rational function $r_{\kappa,\eta}^{i,j}$ with one pole in each $\mathcal{S}_{\bar\kappa}(q)$ and another outside of $\overline{\Omega}$ such that
\begin{align}\label{eq:ratap}
\|r_{\kappa,\eta}^{i,j}-f^{i,j}\|_{C^k(\mathcal{V}_{\bar\kappa,\varepsilon}^{i,j}(q)\cup V)} \leq  \eta ,
\end{align}
for any  $\bar\kappa$ in $\{1,\ldots,N\}.$ 
The fact that we may take the $C^k$
 norm comes from the interior regularity of harmonic functions, enlarging a bit the neighbourhoods. \par
The function $(\text{Re}(r_{\kappa,\eta}^{i,j}),-\text{Im}(r_{\kappa,\eta}^{i,j}))$ is curl-free in $\mathcal{F}(q)$, however since $\mathcal{F}(q)$ is not a simply-connected domain, we can not directly conclude that it is a gradient, which would require it to have vanishing circulations around the solids. However, since
$(\text{Re}(f^{i,j}),-\text{Im}(f^{i,j}))$ is a gradient in $\mathcal V_{\kappa,\varepsilon}^{i,j}$ and vanishes in the neighbourhood of the other solids, we may conclude from \eqref{eq:ratap} that $(\text{Re}(r_{\kappa,\eta}^{i,j}),-\text{Im}(r_{\kappa,\eta}^{i,j}))$ has circulations of size $\mathcal{O}(\eta)$ around each solid.
Therefore, up to subtracting to $(\text{Re}(r_{\kappa,\eta}^{i,j}),-\text{Im}(r_{\kappa,\eta}^{i,j}))$ harmonic fields corresponding to these circulations, we obtain a gradient field and consequently we can define (up to a constant)
%Setting 
%
%$$
%\nabla\bar{\alpha}_{\kappa,\eta}^{i,j}:=(\text{Re}(r_{\kappa,\eta}^{i,j}),-\text{Im}(r_{\kappa,\eta}^{i,j}))   \,   \text{ in } \, \mathcal{F}(q),
%$$
%
%we thus define $\bar{\alpha}_{\kappa,\eta}^{i,j}$ up to a constant.
% , which we in turn fix such that we have $$\|\bar{\alpha}_{\kappa,\eta}^{i,j}-\tilde{\alpha}_{\kappa,\varepsilon}^{i,j}(q,\cdot)\|_{C^k(\mathcal{V}_{\kappa,\varepsilon}^{i,j}(q))} \leq C \eta,$$
%
%We see that $\bar{\alpha}_{\kappa,\eta}^{i,j}$ is harmonic and 
%
a function $\bar{\alpha}_{\kappa,\eta}^{i,j}$ which is harmonic on $\mathcal{F}(q)$ such that
$$\| \nabla \bar{\alpha}_{\kappa,\eta}^{i,j} -  \nabla \tilde{\alpha}_{\kappa,\varepsilon}^{i,j}(q,\cdot)\|_{C^k(V \cup \bigcup_{\kappa} \mathcal{V}_{\kappa,\varepsilon}^{i,j}(q))} \leq C\eta.$$
Now, by using a continuous extension operator, we may define $g(q,\cdot)$ as a function on $\partial\mathcal{F}(q)$ such that 
$$
g(q,\cdot):=\partial_n \bar{\alpha}_{\kappa,\eta}^{i,j}  \, \text{ on } \, \partial\mathcal{F}(q)\setminus \Sigma,
\ \ \int_{\partial\mathcal{F}(q)}g = 0
\ \text{ and }  \ 
\| g \|_{C^k(\Sigma)}=\mathcal{O}(\|g \|_{C^k(\partial\mathcal{F}(q)\setminus \Sigma)}).
$$
Then we introduce $\varphi$ as the solution of the Neumann problem $\Delta \varphi = 0$ in $\mathcal{F}(q)$ and $\partial_n \varphi = g $ on $\partial\mathcal{F}(q)$.
Using elliptic regularity we deduce that
$$
\|\varphi\|_{C^{k,1/2}(\mathcal{F}(q))} \leq C \eta,
$$
for some $C>0$ independent of $\eta$.
Therefore setting
$$g_{\kappa,\eta}^{i,j}(q,\cdot) := \partial_n \bar{\alpha}_{\kappa,\eta}^{i,j} - g\text{ on }\partial\mathcal{F}(q),$$ 
we obtain
$$ \mathcal{A}[q,g_{\kappa,\eta}^{i,j}(q,\cdot)] :=\bar{\alpha}_{\kappa,\eta}^{i,j}-\varphi ,$$
and this allows us to obtain \eqref{del2new} with $C \eta$ in the right-hand side, for some constant $C>0$, instead of $\eta$.
Then to conclude, we just reparameterize the family $(\bar{\alpha}_{\kappa,\eta}^{i,j})$ with respect to $\eta$. This ends the proof of Lemma~\ref{LemmeEta}.
\end{proof}
Now one proceeds as in the proof of \cite[Lemma~12]{GKS}, using a partition of unity argument, to make the above construction Lipschitz continuous with respect to $q$.
At the same time we reduce the control space to a finite dimensional subspace of $\mathcal C$.
More precisely, we have the following result.
\begin{lemma} \label{Lem10}
Let $\delta>0$ be fixed, there exists a finite dimensional subspace $\mathcal E\subset\mathcal C$ such that
for any $\nu>0$, there exist Lipschitz mappings
$$
q \in \mathcal{Q}_\delta \longmapsto \overline{g}_\kappa^{i,j} (q,\cdot)\in \mathcal{C}(q)\cap \mathcal E, \ 
\text{ for } \,  1 \leqslant i \leqslant (3N+1)^2 ,\,   1 \leq  j \leq 3N+1, \,  1 \leq \kappa \leq N,
$$
such that for any $q $ in $ \mathcal{Q}_\delta$, $i,k\in\{1,\ldots,(3N+1)^2\}$, $j,\ell\in\{1,\ldots,3N+1\}$,
\begin{equation} \label{nd1}
\left| \int_{\partial\mathcal{S}_{\kappa}(q)} \nabla \mathcal{A}[q, \overline{g}_{\bar\kappa}^{i,j}(q,\cdot) ] \cdot \nabla \mathcal{A}[q,\overline{g}_{\hat\kappa}^{k,\ell}(q,\cdot)] \, \partial_n \boldsymbol\varphi_\kappa(q,\cdot) \, {\rm d}s
- \delta_{\kappa,\bar\kappa,\hat\kappa} \delta_{(i,j),(k,\ell)} \, \partial_n \boldsymbol\varphi_\kappa (q, x^\kappa_i (q)) \right| \leq \nu ,
\end{equation}
where $\delta_{\kappa,\bar\kappa,\hat\kappa}$ in $\{0,1\}$ is zero unless $\kappa=\bar\kappa=\hat\kappa$.
\end{lemma}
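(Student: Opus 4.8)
The plan is to combine the local, $q$-dependent construction of Lemma~\ref{LemmeEta} with a partition of unity on the (open, locally compact but non-compact) set $\mathcal Q_\delta$, exactly as in \cite[Lemma~12]{GKS}, and then to truncate the resulting (a priori infinite-dimensional) family of boundary data onto a common finite-dimensional subspace $\mathcal E\subset\mathcal C$. First I would fix $\varepsilon>0$ once and for all so that the quantity $\varepsilon$ appearing in Lemma~\ref{LemBase} is, say, $\nu/4$; thus for each $q$ the functions $\tilde\alpha_{\kappa,\varepsilon}^{i,j}(q,\cdot)$ of Lemma~\ref{LemBase} satisfy the almost-orthogonality estimate with right-hand side $\nu/4$, and the points $x_i^\kappa(q)$ are the rotated points introduced before \eqref{eq:mui}. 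For each fixed $q_0\in\mathcal Q_\delta$, Lemma~\ref{LemmeEta} (applied with $k=1$ and with $\eta$ small, depending on $q_0$ and $\nu$) produces functions $g_{\kappa,\eta}^{i,j}(q_0,\cdot)\in\mathcal C$ whose associated potentials $\mathcal A[q_0,g_{\kappa,\eta}^{i,j}(q_0,\cdot)]$ are $C^1$-close to $\delta_{\kappa,\bar\kappa}\tilde\alpha_{\bar\kappa,\varepsilon}^{i,j}(q_0,\cdot)$ near each $\partial\mathcal S_{\bar\kappa}(q_0)$; feeding this into the integral on the left of \eqref{nd1} and using the continuity of $q\mapsto\partial_n\boldsymbol\varphi_\kappa(q,\cdot)$ and of the shape functionals, one gets that \eqref{nd1} holds at $q=q_0$ — indeed in a whole neighbourhood $\mathcal U_{q_0}$ of $q_0$ — with right-hand side $\nu/2$, provided the same fixed boundary datum $g_{\kappa,\eta}^{i,j}(q_0,\cdot)$ is used throughout $\mathcal U_{q_0}$ (here one uses that $\mathcal A[q,g]$ and the domains $\mathcal F(q)$ depend continuously on $q$, together with interior elliptic estimates on the fixed neighbourhoods $\mathcal V_{\bar\kappa,\varepsilon}^{i,j}$, which can be shrunk a little so as to stay inside $\mathcal F(q)$ for all $q\in\mathcal U_{q_0}$).

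Next I would extract a countable, locally finite subcover $(\mathcal U_{q_m})_{m\in\mathbb N}$ of $\mathcal Q_\delta$ and a subordinate smooth partition of unity $(\chi_m)_{m}$, and set
\begin{equation*}
\overline g_\kappa^{i,j}(q,\cdot):=\sum_{m}\chi_m(q)\,g_{\kappa,\eta_m}^{i,j}(q_m,\cdot),
\end{equation*}
which is a finite sum near each $q$, hence Lipschitz (in fact smooth) in $q$ with values in $\mathcal C$, and which still has zero flux through $\Sigma$ since each summand does and $\sum_m\chi_m\equiv1$. The point of the partition of unity is that for $q$ in the support of $\chi_m$ the datum $g_{\kappa,\eta_m}^{i,j}(q_m,\cdot)$ already makes \eqref{nd1} hold with error $\nu/2$; a convexity/linearity argument in the bilinear form
\begin{equation*}
(g,\tilde g)\longmapsto\int_{\partial\mathcal S_\kappa(q)}\nabla\mathcal A[q,g]\cdot\nabla\mathcal A[q,\tilde g]\,\partial_n\boldsymbol\varphi_\kappa(q,\cdot)\,{\rm d}s,
\end{equation*}
together with the fact that all the $g_{\kappa,\eta_m}^{i,j}(q_m,\cdot)$ appearing in the finite sum at a given $q$ are uniformly $C^1$-close to the same limiting functions $\delta_{\kappa,\bar\kappa}\tilde\alpha_{\bar\kappa,\varepsilon}^{i,j}(q,\cdot)$ (up to an error controlled by the diameter of the subcover, which can be taken small), upgrades the error from $\nu/2$ to $\nu$. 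Finally, to land in a \emph{finite}-dimensional $\mathcal E$, one notes that only finitely many of the fixed functions $g_{\kappa,\eta_m}^{i,j}(q_m,\cdot)$ are needed in a neighbourhood of any compact piece of $\mathcal Q_\delta$; a standard exhaustion and re-selection argument (again as in \cite{GKS}) shows that one can in fact arrange the whole construction to use only a finite-dimensional subspace $\mathcal E\subset\mathcal C$ — spanned by suitably many of the $g_{\kappa,\eta_m}^{i,j}(q_m,\cdot)$ — independent of $q$, by first covering a large relatively compact exhausting set and then using the Lipschitz extension of the partition-of-unity construction on the rest.

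The main obstacle, as usual in these arguments, is the tension between the three requirements: (i) $q$-Lipschitz dependence, which forces a partition-of-unity patching of the \emph{$q$-independent} local data coming from Runge approximation; (ii) finite dimensionality of $\mathcal E$, which is incompatible with a partition of unity having infinitely many distinct building blocks unless one carefully recycles finitely many of them; and (iii) keeping the almost-orthogonality estimate \eqref{nd1} under patching, which requires uniform control of the $C^1$ distance between $\mathcal A[q,g_{\kappa,\eta_m}^{i,j}(q_m,\cdot)]$ and the target $\tilde\alpha$ over the whole patch, hence a quantitative version of the continuity of $q\mapsto\mathcal A[q,\cdot]$ and of the shape derivatives of the Kirchhoff potentials on neighbourhoods of $\partial\mathcal S_\kappa(q)$ that remain valid uniformly for $q\in\mathcal Q_\delta$ in a patch. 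Handling (iii) uniformly — in particular ensuring that the fixed harmonic-conjugate neighbourhoods $\mathcal V_{\bar\kappa,\varepsilon}^{i,j}$ can be chosen to work simultaneously for all $q$ in a patch and that the circulation-correction step in Lemma~\ref{LemmeEta} stays small uniformly — is the technical heart of the argument; everything else is a faithful transcription of \cite[Lemma~12]{GKS} with the single solid replaced by the $N$-tuple and the index $(i,j)$ book-keeping carried along.
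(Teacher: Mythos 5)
Your overall strategy coincides with the paper's proof: construct the data at a fixed $q_0$ via Lemma~\ref{LemBase} and Lemma~\ref{LemmeEta}, observe that for a \emph{fixed} boundary datum the potential $\mathcal{A}[\tilde q,g]$ depends Lipschitz-continuously on the domain (shape-derivative argument), so that \eqref{nd1} persists with a slightly worse constant on a neighbourhood of $q_0$, patch the local data with a partition of unity in $q$, absorb the constant by reparameterizing in $\nu$, and let $\mathcal E$ be the span of the finitely many local data. Up to that point the proposal is a faithful transcription of the paper's argument (which is itself the several-body version of \cite[Lemma~12]{GKS}).

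The genuine gap is in how you obtain the finite-dimensional $\mathcal E$ and the uniform Lipschitz bound. You describe $\mathcal Q_\delta$ as ``open, locally compact but non-compact'' and therefore resort to a countable locally finite cover followed by a ``standard exhaustion and re-selection argument''. But $\mathcal Q_\delta$ is closed (it is defined with $\geq\delta$), the positions $h_\kappa$ are confined to a bounded region, and every object in the construction (the domains $\mathcal S_\kappa(q)$, the Kirchhoff potentials, the points $x_i^\kappa(q)$) depends on the angles only through $R(\theta_\kappa)$, i.e.\ is $2\pi$-periodic; so for all purposes of the lemma $\mathcal Q_\delta$ is compact, and this is precisely what the paper uses: a \emph{finite} subcover $\{B(q_\ell,r_\ell)\}_{\ell\leq N_\delta}$ and a finite partition of unity, which at once give a finite-dimensional $\mathcal E$ (spanned by the $N_\delta$ families of local data) and a uniform Lipschitz constant. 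If the parameter set really were non-compact, your scheme would not close: an exhaustion produces infinitely many distinct building blocks $g^{i,j}_{\kappa,\eta_m}[q_m]$ whose span need not be finite-dimensional, there is no mechanism for ``recycling'' finitely many of them, and an infinite locally finite partition of unity only yields local Lipschitz regularity, whereas the statement requires one fixed $\mathcal E$ and one Lipschitz map on all of $\mathcal Q_\delta$. A secondary caveat: your control of the cross terms in the bilinear form rests on the claim that all building blocks active at a given $q$ are $C^1$-close to ``the same limiting functions'' $\tilde\alpha_{\bar\kappa,\varepsilon}^{i,j}(q,\cdot)$; Lemma~\ref{LemBase} is a pointwise-in-$q$ existence statement with no continuity in $q$, so this closeness across patches is not justified as stated — the mechanism available (and the one the paper invokes) is the Lipschitz dependence of $\tilde q\mapsto\mathcal{A}[\tilde q,g]$ for each fixed datum $g$ on the finitely many patches.
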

\begin{proof}[Proof of Lemma~\ref{Lem10}]
Consider $q $ in $ Q_{\delta}$ and $\nu >0$.
Choosing first $\varepsilon>0$ small enough in Lemma~\ref{LemBase} and then $\eta=\eta(\varepsilon)>0$ small enough in Lemma~\ref{LemmeEta}, we may find for this $q $ in ${Q_{\delta}}$ functions $g_{\kappa,\eta}^{i,j}=g_{\kappa,\eta}^{i,j}[q]$  satisfying the properties above, and in particular such that \eqref{nd1} is valid. \par
Note that for any $q $ in $ {\mathcal{Q}_\delta}$, the unique solution $\hat{\alpha}_{\kappa,\eta}^{i,j}(\tilde{q},q,\cdot)$, up to an additive constant, to the Neumann problem
$$
\Delta_{x} \hat{\alpha}_{\kappa,\eta}^{i,j}(\tilde{q},q,x)=0\  \text{in} \   \mathcal{F}(\tilde{q}), \  \partial_{n}\hat{\alpha}_{\kappa,\eta}^{i,j}(\tilde{q},q,x)=0\  \text{on} \  \partial \mathcal{F}(\tilde{q}) \setminus \Sigma, \ 
\partial_{n} \hat{\alpha}_{\kappa,\eta}^{i,j}(\tilde{q},q,x) = g_{\kappa,\eta}^{i,j}(q,x)\  \text{on} \  \Sigma,
$$
is Lipschitz with respect to $\tilde{q} $ in $\mathcal{Q}_\delta$ (for a detailed proof using shape derivatives, see e.g. \cite{ChambrionMunnier,hp,LM}).
Therefore, if a family of functions $g_{\kappa,\eta}^{i,j}$ satisfies \eqref{nd1} at some point $q$ in ${Q_{\delta}}$, it also satisfies \eqref{nd1} (with say $2\nu$ in the right hand side) in some neighborhood of $q$. 
Due to the compactness of ${Q_{\delta}}$, since it can be covered with such neighborhoods, one can extract a finite subcover by balls $\{B(q_\ell,r_{\ell})\}_{\ell=1 \ldots N_\delta}$.
We introduce a partition of unity $\varrho_{1}, \ldots, \varrho_{N_{\delta}}$ (according to the variable $q$) adapted to this subcover. Defining
\begin{equation*}
\overline{g}_{\kappa}^{i,j}(q,\cdot) := \sum_{\ell=1}^{N_{\delta}} \varrho_{\ell}(q) \, g_{\kappa,\eta}^{i,j}[q_{\ell}](\cdot),
\end{equation*}
we can deduce an estimate like \eqref{nd1} with $C \nu$ on the right hand side, for some positive constant $C$ independent of $\nu$. It remains then to reparameterize with respect to $\nu$ to obtain \eqref{nd1} exactly. \par
Finally, the finite dimensional subspace $\mathcal E$ is then generated by $\{g_{\kappa,\eta}^{i,j}(q_i,\cdot)\}_{i=1}^{N_\delta}$ and its dimension $N_\delta$ only depends on  $\delta$. This ends the proof of Lemma~\ref{Lem10}.
\end{proof}
\ \par
Now we would like to enforce the additional condition on the control introduced in Section~\ref{sec:no2}, that is, 
that the control belongs to ${\mathcal C}_{b}$ (defined in \eqref{DefCb}).
The starting point is as follows:
for any $q $ in $ \mathcal{Q}_\delta$,  $1 \leqslant i \leqslant (3N+1)^2 $, the vectors 
$$
\left(\int_{\partial\mathcal{S}_{\kappa}(q)}
\mathcal{A}\left[q, \sum_{\bar\kappa=1}^N \overline{g}_{\bar\kappa}^{i,j}(q,\cdot) \right]
\, \partial_n \boldsymbol\varphi_\kappa(q,\cdot)\, ds\right)_{\kappa=1,\ldots,N},
$$
for $j=1,\ldots,3N+1$, are linearly dependent in $\mathbb R^{3N}$.
Therefore, there exist $\lambda^{i,j}(q) $ in $ \R$ such that
$$
\sum_{j=1}^{3N+1} \lambda^{i,j} (q) \left(\int_{\partial\mathcal{S}_{\kappa}(q)} \mathcal{A}[q, \overline{g}_{\bar\kappa}^{i,j}(q,\cdot) ]  \, \partial_n \boldsymbol\varphi_\kappa(q,\cdot)\, ds\right)_{\kappa=1,\ldots,N}=0
\ \ \text{ and } \ \ 
 \sum_{j=1}^{3N+1} \lambda^{i,j}(q)^2=1.
$$
Moreover, relying on Cramer's formula, we can manage in order that these coefficients are Lipschitz with respect to $q$. \par
Now for any $q \in \mathcal{Q}_\delta$,  for any  $1 \leqslant i \leqslant (3N+1)^2 $, we set 
\begin{equation*}
g_i(q,\cdot):=\sum_{j=1}^{3N+1}\lambda^{i,j}(q)\sum_{\kappa=1}^N\overline{g}_{\kappa}^{i,j}(q,\cdot).
\end{equation*}
Using \eqref{nd1}, up to further reducing $\nu>0$, we obtain
\begin{gather} \label{angou}
\left| \left(  \int_{\partial\mathcal{S}_\kappa(q)} \nabla \mathcal{A}[q,g_i(q,\cdot)] \cdot \nabla \mathcal{A}[q,g_j(q,\cdot)]
\partial_n\boldsymbol\varphi_\kappa(q,\cdot)\, ds \right)_{\kappa\in\{1,\ldots,N\}}
- \delta_{i,j} \, e_i (q)  \right| \leq \nu ,   \\
\nonumber
\int_{\partial\mathcal{S}_\kappa(q)}   \mathcal{A}[q,g_i(q,\cdot)] \partial_n\boldsymbol\varphi_\kappa(q,\cdot)\, ds  = 0, 
\end{gather}
where $e_i$ is defined in \eqref{def-e-mu} for $i=1,\ldots,(3N+1)^2$. Hence, $g_i(q,\cdot)\in \mathcal{C}_{b}(q)\cap \mathcal E$.
With this family of elementary controls $g_{i}$, we are finally in position to prove Proposition~\ref{prop:tempo}. \par
\ \par
For  $\delta>0$, $\nu>0$ and $(q,v) \in \mathcal{Q}_\delta \times \R^{3N}$,  we define the function
\begin{equation} \label{tiQ}
\tilde{g} (q,v,\cdot):=\sum_{i=1}^{(3N+1)^2} \sqrt{\mu_{i}(q,v)} \, {g}_{i}(q,\cdot),
\end{equation}
in $\mathcal{C}_{b}(q)\cap \mathcal E$, where we recall that the positive functions $\mu_i$ were defined in \eqref{def-e-mu}.
We then define ${\mathcal T}: \mathcal{Q}_\delta \times \R^{3N} \rightarrow \mathcal{Q}_\delta \times \R^{3N}$ by
\begin{equation*}
\mathcal{T}: \ \ (q,v) \mapsto ({\mathcal T}_{1},{\mathcal T}_{2})(q,v):= \left( q, \left(\int_{\partial\mathcal{S}_\kappa(q)} |\nabla \mathcal{A}[q,\tilde{g} (q,v,\cdot)]|^2 \partial_n \boldsymbol\varphi_\kappa(q,\cdot)\, ds\right)_{\kappa=1,\ldots,N} \right).
\end{equation*}
Recalling \eqref{eq:frQ} and $Q_q$ from \eqref{Q-FiniteD},
we may further expand
\begin{align*}
{\mathcal T}_{2}(q,v) &=
\sum_{1 \leq i,j \leq (3N+1)^2} \sqrt{\mu_i(q,v) \mu_j(q,v)} \left(  \int_{\partial\mathcal{S}_\kappa(q)} \nabla \mathcal{A}[q,g_i(q,\cdot)] \cdot \nabla \mathcal{A}[q,{g}_j(q,\cdot)] \partial_n\boldsymbol\varphi_\kappa(q,\cdot)\, ds \right)_{\kappa\in\{1,\ldots,N\}} \\&=2Q_q\left(\left(\sqrt{\mu_i(q,v)}\right)_{i=1,\ldots,(3N+1)^2}\right). 
\end{align*}
Considering ${\mathcal T}_{2}$ as a quadratic map of the variable $(\sqrt{\mu_i(q,v)})_{i=1\ldots(3N+1)^2}$ with coefficients close to $\delta_{i,j} e_{i}$, relying on \eqref{eq:muii} and \eqref{angou}, we see that for suitably small $\nu>0$ one has for any $q_{0} \in Q_{\delta}$
\begin{equation*}
\| {\mathcal T}_{2}(q_{0}, \cdot) - \mbox{Id} \|_{C^1(B(0,1))} < \frac{1}{2}.
\end{equation*}
Consequently ${\mathcal T}_{2}$ constitutes a diffeomorphism from $B(0,1)$ onto its image, which contains at least ${B}(\mathcal{T}_2(q_0,0),1/2)$. Furthermore, since $\mathcal{T}_2(q_0,0)=\mathcal{O}(\nu)$, reducing $\nu>0$ if necessary, we have that ${B}(\mathcal{T}_2(q_0,0),1/2)$ contains ${B}(0,1/4)$. Therefore, for such $\nu$, $\mathcal{T}$ is invertible at any $(q,v)\in {\mathcal{Q}_\delta}\times {B}(0,1/4)$. \par
Now we set for $1 \leqslant i \leqslant (3N+1)^2$
\begin{equation} \label{DefXBar}
\widetilde{X}_{q,i}:=\sqrt{\mu_i(q,\tilde{v})}>0
\text{ where }
(q,\tilde{v}):=\mathcal{T}^{-1}(q,0),
\ \text{ and } \
\overline{X}_q := \frac{\widetilde{X}_q}{\big\|\widetilde{X}_q\big\|}.
\end{equation}
Using \eqref{tiQ}, we find $Q_q(\overline{X}_q) = 0$.  
Moreover it is easy to check that for $1 \leqslant i \leqslant (3N+1)^2$,
\begin{align}\label{eq:niceDq}
D Q_q(\overline{X}_q)(0,\ldots,0,1,0,\ldots,0)= \frac{1}{2}\overline{X}_{q,i} \, e_i +\mathcal{O}(\nu) .
\end{align}
Hence for $\nu>0$ small enough, thanks to \eqref{eq:muii} and to the positivity of the coordinates $\overline{X}_{q,i}$, we see that $\text{Range}(D Q_q(\overline{X}_q))=\R^{3N}$. \par
Using Lemma~\ref{lem:muireg}, \eqref{def-e-mu}, \eqref{DefXBar} and the regularity of $\mathcal{T}^{-1}$, we deduce that $\overline{X}_q$ is Lipschitz with respect to $q$ and consequently $q\mapsto D Q_q(\overline{X}_q)$ is also Lipschitz. In order to apply Proposition \ref{lem:geoalg}, it remains to make a selection of right inverses of $D Q_q(\overline{X}_q)$ which are Lipschitz with respect to $q$. A possibility for that, relying on \eqref{eq:muii} is to define 
\begin{equation*}
A_{q}: \R^{3N} \longrightarrow \R^{(3N+1)^2} \ \text{ by } \ A_{q}(v)_{i}= 2\frac{\mu_i(q,v)}{\overline{X}_{i}(q,v)},
\end{equation*}
which is Lipschitz with respect to $q$ as a quotient of Lipschitz maps with positive denominator. Then due to \eqref{eq:niceDq}, $D Q_q(\overline{X}_q) \circ A_{q} = \mbox{Id}_{\R^{3N}} + O(\nu)$. It is consequently invertible in $\R^{3N}$ through a Neumann series which is consequently also Lipschitz in $q$. This allows to define unambiguously a right-inverse to $D Q_q(\overline{X}_q)$ in a Lipschitz way with respect to $q$. \par
This concludes the proof of Proposition~\ref{prop:tempo}.
\end{proof}
\subsection{Proof of Proposition~\ref{prop-design}}
Under the assumptions of Proposition~\ref{prop-design}, we first introduce ${\mathcal E}$ and the functions $g_{i}$
given by Proposition~\ref{prop:tempo}. Next we set
$$
d=3N, \quad  E=\R^{(3N+1)^2}, \quad  F=\cup_{q\in\mathcal Q_\delta} \{  q\}  \times \tilde{\mathscr K} \times \mathscr{B}(q,  r_\omega), \quad  \mathfrak{p}=(q,q',\gamma,\omega),
$$
where we recall that  $\tilde{\mathscr K} $ is defined in   \eqref{def-tK}, 
and for  $X := (X_i )_{1 \leqslant i \leqslant 3N+1)^2}$, we set
$$
Q_\mathfrak{p}(X)=  \mathfrak Q(q ) \left[ \sum_{i=1}^{(3N+1)^2} X_i g_i(q,\cdot)\right] 
\text{ and }
L_\mathfrak{p}(X)=\mathfrak L(q,q',\gamma,\omega)\left[\sum_{i=1}^{(3N+1)^2} X_i g_i(q,\cdot)\right],
$$
recalling \eqref{QLL} and using \eqref{Q-FiniteD}.
It is classical that the Kirchhoff potentials $\varphi$ and the  stream function $\psi$  are 
 $C^\infty$  as functions of $(q,x)$ on $\cup_{q\in\mathcal{Q}_\delta}\{q\}\times\mathcal{F}(q)$, see e.g. \cite{ChambrionMunnier,hp,LM}.
The Lipschitz continuity of $\mathfrak{p}\mapsto L_\mathfrak{p}$ then follows from \eqref{QLL}, the definitions given in Section~\ref{sec:no2} and the fact that $\mathfrak L$ is linear with respect to $(q',\gamma,\omega)$. \par
 %
%Since $F$ is a bounded set, it also follows that the family $(L_\mathfrak{p})_{\mathfrak{p}}$ is uniformly bounded. \par
%
Therefore, the conditions of Proposition~\ref{lem:geoalg} are satisfied, and we apply it to obtain a Lipschitz map 
$$
R=(R_{1},\ldots,R_{(3N+1)^2}):F \times \R^{3N} \longrightarrow {\mathcal E},
$$
such that
$$
\mathfrak Q(q) \left[\sum_{i=1}^{(3N+1)^2} R_i(q,q',\gamma,\omega,\cdot) g_i(q,\cdot)\right]
+ \mathfrak L(q,q',\gamma,\omega) \left[\sum_{i=1}^{(3N+1)^2} R_i(q,q',\gamma,\omega,\cdot) g_i(q,\cdot)\right]
=\text{Id}_{\mathbb{R}^{3N}}.
$$
Finally one then sets
$$
\mathfrak{R}(q,q',\gamma,\omega):=\sum_{i=1}^{(3N+1)^2} R_i(q,q',\gamma,\omega,\cdot) g_i(q,\cdot),
$$
to conclude the proof of Proposition~\ref{prop-design}. \qed
\section{Proof of the existence part of Theorem~\ref{main}}
\label{sec:existence}
In this section, we prove Theorem~\ref{main}. 
Let $\delta >0$ and $\mathcal E$ be a  finite dimensional subspace of $\mathcal C$ as given by 
Proposition~\ref{prop-design}. 
Let $T>0$, $r_\omega>0$
and $\mathscr K$ be a compact subset of  $  \R^{3N}  \times   \R^{3N}   \times   \R^{N}$. 
Let 
$
\mathscr C
% \in \text{Lip}( \cup_{q\in \mathcal Q_\delta} \{  q\} \times \mathscr K  \times \mathscr{B}(q,r_\omega); \mathcal E),
$
be given by   \eqref{cladef}. 
Let $q $ in $C^{2} ([0,T] ; \mathcal Q_\delta )$ and  $\gamma$ in $\R^{N}$ such that 
 for any  $t $ in $ [0,T]$, the triple $(q'(t),q''(t),\gamma) $ is in $ \mathscr{K}$. 
Let   $\omega_0$ in $L^\infty ( \mathcal{F} (q (0))$ such that 
 \begin{equation}
 \label{did}
\|\omega_0\|_{L^\infty(\mathcal{F}_0)}\leq r_\omega. 
 \end{equation}
To prove the existence part of Theorem~\ref{main} (i.e. the first item) 
we look for a 
  velocity field  $u $ in $   LL(T) $ with  $\curl u (0,\cdot) = \omega_0$ and for any
 $t$ in $[0,T]$, 
 $$\omega (t,\cdot) := \curl u (t,\cdot) \in  \mathscr{B}(q(t),r_\omega),$$ (recall the definition  in  \eqref{lesboules}), 
satisfying, for $t$ in $[0,T]$, the equations   \eqref{E1}-\eqref{E2}, 
  \eqref{EqTrans}-\eqref{EqRot} for  $ \kappa $ in $ \{1,2, \ldots , N \}$,  
 \eqref{souslab}, 
 \eqref{Yudo1} with $g$ given by 
 \eqref{pasmixe}, \eqref{Yudo2}  and  \eqref{eq:circs}.
 
 \subsection{Reduction to a fixed point problem for the vorticity}
 Following the analysis of Section~\ref{sec:no2}, we are going to look for a vorticity
 $\omega$ solution to the 
 second equation of System \eqref{problem} with $g$ given by 
 \eqref{pasmixe} and $\mathscr C $ by \eqref{cladef}, i.e. 
 \begin{equation}
 \label{sesese}
 g(t)= \mathscr C (q(t),q'(t),q''(t), \gamma,\omega (t,\cdot)).
 \end{equation}
Once  $\omega$  is determined, with this choice of $g$,
 the first equation of System \eqref{problem} is satisfied thanks to Proposition~\ref{prop-design}, and 
 according to Section~\ref{sec:no2}, this entails that 
the fluid velocity $u$ given by \eqref{EQ_irrotational_flow}
satisfies   \eqref{E1}-\eqref{E2}, 
 \eqref{EqTrans}-\eqref{EqRot} for  $ \kappa \in \{1,2, \ldots , N \}$,  
 \eqref{souslab},   \eqref{Yudo1}  with $g$ given by  \eqref{sesese},  and  \eqref{eq:circs}.
 
Hence we look for a solution $\omega$ of the second equation of System \eqref{problem} such that $\omega (t,\cdot) $ is in $ \mathscr{B}(q(t),r_\omega)$ for any $t$ in $[0,T]$, 
and satisfying the condition  \eqref{Yudo2}   on the entering vorticity  and the initial condition  $\omega (0,\cdot) = \omega_0$. 
%Therefore we associate with the  second equation of Problem \eqref{problem} the two last conditions.
%To handle the nonlinear feature of the vorticity equation we are going to use  a fixed point argument. 
We use a fixed point argument.
More precisely we look for 
  a fixed point of a mapping which maps a vorticity $\omega$ to 
 the solution $\tilde\omega$ of the transport equation:
\begin{align}\label{fpproblem}
\begin{split}
&(\partial_t +
%\left( \sum_{\kappa=1}^N \nabla (\boldsymbol\varphi_{\kappa} (q,\cdot)\cdot \boldsymbol q'_{\kappa} )
%+ \nabla^\perp\psi_{\omega,\gamma} (q,\cdot)   + \nabla   \mathcal{A}[q,\mathscr C  (q,q',q'',\gamma,\omega)]  \right)
U\cdot\nabla)\tilde\omega=0\text{ in }\mathcal{F}(q),\text{ for }t\in[0,T],\\
&\tilde\omega =0 \text{ on }\Sigma_- ,\text{ for }t\in[0,T],
\\
&\tilde\omega(0)=\omega_0 ,
\end{split}
\end{align}
where $U$ in $LL(T)$  is the following vector field associated with $\omega$: 
\begin{equation*}
U :=\sum_{\kappa=1}^N \nabla (\boldsymbol\varphi_{\kappa} (q,\cdot)\cdot \boldsymbol q'_{\kappa} )
+ \nabla^\perp\psi_{\omega,\gamma} (q,\cdot)   + \nabla   \mathcal{A}[q,\mathscr C  (q,q',q'',\gamma,\omega)] ,
\end{equation*}
cf. Section \ref{sec:no}. 
We will rely on
 the Schauder fixed point theorem which asserts as we recall that if  $\mathcal{B}$ is a nonempty convex closed subset of a 
normed space $\mathcal{X}$ and $\mathscr{F}:\mathcal{B}\mapsto\mathcal{B}$ is a continuous mapping 
 such that $\mathscr F(\mathcal{B})$ is contained in a compact subset of $ \mathcal{B}$, then
 $\mathscr{F}$  has a fixed point.

\subsection{Definition of an appropriate operator}

Let us set the functional setting. We denote
$$\mathcal{X}:=L^\infty  \left(\cup_{t \in (0,T)}  \, {t}  \times \mathcal{F} (q(t) )\right)
  \text{ and } \mathcal{B}:=\{\omega\in\mathcal{X}:\ \|\omega\|_\mathcal{X}\leq \|\omega_0\|_{L^\infty(\mathcal{F}_0)}\}.
$$
Observe that by  \eqref{did}, a vorticity $\omega$ in $\mathcal{B}$  satisfies the condition that $\omega (t,\cdot) $ is in $ \mathscr{B}(q(t),r_\omega)$ for any
 $t$ in $[0,T]$.
We endow $\mathcal{X}$ with the $L^3_t(L^3_x)$ topology. We aim at defining the operator
$$\mathscr{F}:\mathcal{B}\longrightarrow\mathcal{B},$$
which with $\omega$ in $\mathcal{B}$ associates a solution $\tilde\omega$ of \eqref{fpproblem}. 

One difficulty is that, since the vector field $U$
is not tangent on $\Sigma$ due to the presence of the control ${\mathcal A}$, this solution $\tilde\omega$ is not properly defined. To define it appropriately, we first introduce $\widetilde{\Omega}$ an open set containing $\overline{\Omega}$, and the set 
\begin{equation*}
B \Omega_{\delta} := \{ x \in \overline{\Omega}, \ d(x,\partial \Omega) \leq \delta \},   \text{ see Figure \ref{htht}}.
% \text{ and }
% \widetilde{B \Omega}_{\delta} := \overline{\widetilde{\Omega}} \setminus (\Omega \setminus B \Omega_{\delta}).
\end{equation*}
In connection with these domains we consider the spaces of functions with derivatives in log-Lipschitz:
\begin{multline*}
W_{\delta} := \{ f \in C^{1}(B \Omega_{\delta}), \ / \ \nabla f \in \textrm{log-Lip} (B \Omega_{\delta}) \} \,   \text{ and }
\\
\widetilde{W} := \{ f \in C^{1}(B\Omega_{\delta} \cup(\overline{\widetilde{\Omega}}\setminus \Omega )  ), \ / \ \nabla f \in \textrm{log-Lip} (B\Omega_{\delta} \cup(\overline{\widetilde{\Omega}}\setminus \Omega )  ) \}.
\end{multline*}
According to \cite[\textsection 4.6, p. 194]{Stein} there exists 
%We now define
 a continuous linear extension operator $\pi$ which maps functions defined on the set $B\Omega_{\delta}$ to functions defined in the set 
$$B\Omega_{\delta} \cup(\overline{\widetilde{\Omega}}\setminus \Omega )  ,$$
 which continuously maps the space $W_{\delta}$ into  the space $\widetilde{W}$. 
 %That such an extension operator preserving this modulus of continuity exists is  perhaps not standard; we refer for instance to Stein \cite[\textsection 4.6, p. 194]{Stein}.
Multiplying $\pi$ by a smooth cutoff function with value $1$ in a neighborhood of $\overline{\Omega}$ and $0$ in a neighborhood of $\partial\widetilde{\Omega}$, we can assume that $\pi(f)$ is compactly supported in $\overline{\widetilde{\Omega}}$. Now we extend $U$ as follows. Due to \eqref{def-Cpasb} and the definitions in Section~\ref{sec:no}, we have
\begin{equation*}
\forall t >0, \ \ \int_{\partial \Omega} U(t,\cdot) \cdot n \, {\rm d}s =0.
\end{equation*}
Consequently we may introduce, up to an additive constant, the stream function $\Psi_{U}$ associated with $U$:
\begin{equation*}
U(t,x) = \nabla^\perp \Psi_{U}(t,x) \text{ in } B \Omega_{\delta} \text{ for } t>0.
\end{equation*}
For $t$ in $[0,T]$, 
 we define, similarly to $\mathcal F(t)$, 
\begin{equation*}
%\tilde\mathcal F_0 := \tilde\omega \setminus \bigcup_{ \kappa \in \{1,2, \ldots , N \} } \, {\mathcal S}_{\kappa,0} ,  \ \text{ at } t=0 , \ \text{ and } 
\widetilde{\mathcal F}(t) := \tilde{\Omega} \setminus \bigcup_{ \kappa \in \{1,2, \ldots , N \} } \, {\mathcal S_\kappa}(t). 
\end{equation*}

Then we extend $U$ as follows: for $t>0$, we define $\widetilde{U}(t,\cdot)$ in $\widetilde{\mathcal F}(t)$  by
\begin{equation*}
\widetilde{U}(t,x) = \left\{ \begin{array}{l}
U(t,x) \text{ if } x \in {\mathcal F}(t), \\
\nabla^\perp \pi(\Psi_{U})  \text{ if } \in \widetilde{\Omega} \setminus \Omega.
\end{array} \right.
\end{equation*}
Similarly to $LL(T)$ we define  the space $\widetilde{\text{LL}}(T)$
 of log-Lipschitz vector fields
 on $\widetilde{\mathcal F} (t) $ which 
 is defined via its norm
$$\|f\|_{\widetilde{\text{LL}}(T)} :=\|f\|_{L^\infty(\cup_{t \in (0,T)}  \, {t}  \times \widetilde{\mathcal F} (q(t) ))}+\sup_{t\in[0,T]} \, \sup_{x,y \in  \widetilde{\mathcal F} (t), \, 
x\neq y}\,  \frac{|f(t,x)-f(t,y)|}{|x-y|(1+\text{ln}^{-}(|x-y|))}.$$
Then the vector field $\widetilde{U}$ is divergence-free on $\cup_{t \in (0,T)}  \, {t}  \times \mathcal{F} (q(t))$ 
%of $U$ 
and, since the extension operator $\pi$  preserves such modulus of continuity, 
 $\widetilde{U}$ is in $\widetilde{\text{LL}}(T)$.
\begin{figure}[ht] 
	\begin{center}
		\input{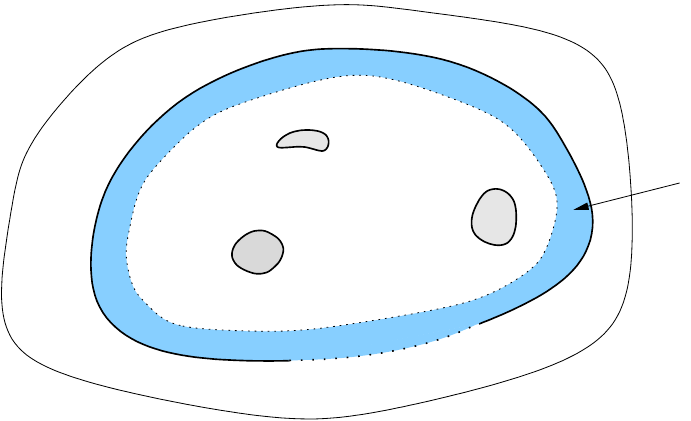_t}
	\end{center}
    \caption{Extension of the domain}
    \label{htht}
\end{figure}

The advantage of $\widetilde{U}$ is that we may unambiguously define the flow $\Phi^{\widetilde{U}}(s,t,x)$ associated with $\widetilde{U}$. Precisely, we define the flow as the map 
\begin{equation} \label{WX1}
(s,t,x) \in [0,T] \times  [0,T]  \times \widetilde{\Omega} 
\mapsto \Phi^{\widetilde{U}}(s,t,x) \in  \widetilde{\Omega}  ,
\end{equation}
such that
\begin{equation} \label{WX2}
\partial_{s} \Phi^{\widetilde{U}}(s,t,x) = \widetilde{U}(s,\Phi^{\widetilde{U}}(s,t,x)) \ \text{ and }
\ \Phi^{\widetilde{U}}(t,t,x) =x.
\end{equation}
Now given $t>0$ and $x $ in $ \overline{{\mathcal F}(t)}$, recalling \eqref{Yudo2}, we set
\begin{equation*}
\tilde{\omega}(t,x) = \left\{ \begin{array}{l}
0 \text{ if there exists } s \in [0,T] \text{ s.t. } \ \Phi^{\widetilde{U}}(s,t,x) \in \Sigma^-, \\
\omega_{0}(\Phi^{\widetilde{U}}(0,t,x)) \text{ otherwise,}
\end{array} \right.
\end{equation*}
and we define 
$$\mathscr{F}(\omega)=\tilde{\omega}.$$
 It is indeed easy to check that when  there is no $s $ in $ [0,T]$ such that $\Phi^{\widetilde{U}}(s,t,x) $ in $ \Sigma^-$, it means that $\Phi^{\widetilde{U}}(s,t,x) $ in $ \overline{\mathcal F(s)}$ for all $s $ in $ [0,t]$, except for the negligible set for which $\Phi^{\widetilde{U}}(s,t,x) $ in $ \partial \Sigma^-$ for some $s $ in $ [0,t)$. \par
Now it is straightforward that $\mathscr F(\mathcal{B})\subset \mathcal{B}$.
To prove that  $\mathscr F$ has a fixed point by Schauder's theorem, it remains to show that $\mathscr F$ is continuous and that $\mathscr{F}(\mathcal{B})$ is relatively compact (with respect to the $L^3_t(L^3_x)$ topology).

\subsection{Continuity}
We will make use of the following result, see e.g. \cite[Lemma~1]{GS-Uniq}.
\begin{lemma}\label{ldecestb}
There exists $C=C(\mathcal{Q}_\delta)>0$ such that for any $q$ in $\mathcal{Q}_\delta$, for any $u:\mathcal{F}(q)\to\mathbb{R}^2$, for any $p\geq 2$,  there holds, with the convention
$\| f \|_{W^{1-1/q,q}(\partial {\mathcal F})} := \inf \{ \| \overline{f} \|_{W^{1,q}({\mathcal F})}, \ \overline{f} \in W^{1,q}( {\mathcal F}) \ \text{ and } \ \overline{f}_{|\partial {\mathcal F}} = f \}$:
\begin{align*}
\|u\|_{W^{1,p}(\mathcal{F}(q))}\leq  &Cp \left( \|\div u\|_{L^{p}(\mathcal{F}(q))}+\|\curl u\|_{L^{p}(\mathcal{F}(q))}\right)
\\ &\quad + C\left( \|u\cdot n\|_{W^{1-1/p,p}(\partial\mathcal{F}(q))}+\sum_{\kappa=1}^N\left|\int_{\partial\mathcal{S}_\kappa(q)} u\cdot\tau \, {\rm d}s \right|\right) .
\end{align*}
\end{lemma}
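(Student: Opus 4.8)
The plan is to reduce the estimate to standard $L^p$ elliptic regularity by means of a Hodge-type decomposition of $u$, while keeping careful track of the dependence of the constants both on the exponent $p$ and on $q\in\mathcal{Q}_\delta$. First I would construct the ``gradient part'': let $\phi$ solve the Neumann problem $\Delta\phi=\div u$ in $\mathcal{F}(q)$, $\partial_n\phi=u\cdot n$ on $\partial\mathcal{F}(q)$, which is solvable since the compatibility relation $\int_{\mathcal{F}(q)}\div u\,{\rm d}x=\int_{\partial\mathcal{F}(q)}u\cdot n\,{\rm d}s$ holds by the divergence theorem. Lifting the boundary datum by a function $\overline{f}\in W^{1,p}(\mathcal{F}(q))$ with $\overline{f}_{|\partial\mathcal{F}(q)}=u\cdot n$ and $\|\overline{f}\|_{W^{1,p}(\mathcal{F}(q))}$ comparable to $\|u\cdot n\|_{W^{1-1/p,p}(\partial\mathcal{F}(q))}$ (which is possible by the very definition of the trace norm used in the statement), one reduces to a Neumann problem with homogeneous boundary data and right-hand side in $L^p$, so that $L^p$ elliptic regularity yields
\[
\|\nabla\phi\|_{W^{1,p}(\mathcal{F}(q))}\leq Cp\,\|\div u\|_{L^p(\mathcal{F}(q))}+C\,\|u\cdot n\|_{W^{1-1/p,p}(\partial\mathcal{F}(q))},
\]
the factor $Cp$ being the single point that requires attention (see below).

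Next I would treat $v:=u-\nabla\phi$, which is divergence-free in $\mathcal{F}(q)$ with $v\cdot n=0$ on $\partial\mathcal{F}(q)$; since $\mathcal{F}(q)$ is a bounded planar domain, $v=\nabla^\perp\psi$ for a stream function $\psi$ that is locally constant on $\partial\mathcal{F}(q)$. Because $\curl v=\curl u$ and $\int_{\partial\mathcal{S}_\kappa(q)}v\cdot\tau\,{\rm d}s=\int_{\partial\mathcal{S}_\kappa(q)}u\cdot\tau\,{\rm d}s$ (as $\int_{\partial\mathcal{S}_\kappa(q)}\nabla\phi\cdot\tau\,{\rm d}s=0$), the function $\psi$ solves $\Delta\psi=\pm\curl u$ in $\mathcal{F}(q)$ with $\psi=0$ on $\partial\Omega$ and $\psi=c_\kappa$ on $\partial\mathcal{S}_\kappa(q)$, where the unknown constants $c_\kappa$ are fixed by the $N$ circulation conditions; this is precisely the solvability discussed in Section~\ref{sec:no}. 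Decomposing $\psi=\psi_0+\sum_\kappa c_\kappa\,\psi_\kappa(q,\cdot)$ with $\psi_0$ the solution with vanishing boundary data and $\psi_\kappa(q,\cdot)$ the harmonic fields of Section~\ref{sec:no}, elliptic regularity gives $\|\nabla\psi_0\|_{W^{1,p}(\mathcal{F}(q))}\leq Cp\,\|\curl u\|_{L^p(\mathcal{F}(q))}$, while the linear system relating $(c_\kappa)$ to the circulations has an invertible matrix, uniformly in $q\in\mathcal{Q}_\delta$, so that $|c_\kappa|\leq C\big(\sum_\nu|\int_{\partial\mathcal{S}_\nu(q)}u\cdot\tau\,{\rm d}s|+\|\curl u\|_{L^p(\mathcal{F}(q))}\big)$, together with $\|\nabla\psi_\kappa(q,\cdot)\|_{W^{1,p}(\mathcal{F}(q))}\leq C$. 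Summing the gradient and the stream-function parts then yields the claimed inequality.

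The main obstacle --- and the only nonstandard feature --- is the linear growth in $p$ of the constant in front of $\|\div u\|_{L^p(\mathcal{F}(q))}+\|\curl u\|_{L^p(\mathcal{F}(q))}$; this is exactly the estimate that underlies the log-Lipschitz Yudovich-type theory. It stems from the fact that the operator norm on $L^p$ of the second-order singular integrals appearing in the Calder\'on--Zygmund estimate for $\Delta^{-1}$ (equivalently, of $\nabla^2\Delta^{-1}$) grows like $O(p)$ as $p\to+\infty$; one obtains it by a localisation argument with a partition of unity, reducing to the model problem on $\mathbb{R}^2$ or a half-space where the kernel bounds are explicit, or by interpolating the $L^2$ bound against weak-$(1,1)$ bounds while tracking constants. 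Finally, all constants can be taken uniform over $q\in\mathcal{Q}_\delta$: since $\mathcal{Q}_\delta$ is compact and $q\mapsto\mathcal{F}(q)$ depends smoothly on $q$, the domains $\mathcal{F}(q)$ enjoy uniformly controlled $C^\infty$ geometry (uniform ball and cone conditions, uniform bounds on boundary charts), so a finite subcover of $\mathcal{Q}_\delta$ together with the continuity in $q$ of the constants in the elliptic estimates above produces a single $C=C(\mathcal{Q}_\delta)$, as stated. (Alternatively one may simply invoke \cite[Lemma~1]{GS-Uniq}, where precisely this estimate is established.)
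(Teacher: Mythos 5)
You should know at the outset that the paper offers no proof of this lemma at all: it is quoted with the pointer to \cite[Lemma~1]{GS-Uniq}, so your closing parenthesis (simply invoking that result) is exactly what the authors do. Judged as a standalone argument, however, your sketch has a genuine gap, and it sits precisely at the point that gives the statement its content, namely the way the factor $p$ is distributed between the two groups of terms.

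First, the reduction of the Neumann problem is not correct as written: an extension $\overline{f}\in W^{1,p}(\mathcal{F}(q))$ with $\overline{f}_{|\partial\mathcal{F}(q)}=u\cdot n$ matches the \emph{trace}, not the \emph{normal derivative}, so subtracting it does not produce homogeneous Neumann data. The workable version of your idea is to subtract from $u$ the vector field $w:=\overline{f}\,\tilde n$, where $\tilde n$ is a fixed smooth extension of the normal field; then $u-w$ has vanishing normal trace, but $\div w$ and $\curl w$ are controlled in $L^{p}$ only by $\|\overline{f}\|_{W^{1,p}}$, so when the div--curl estimate is applied to $u-w$ the boundary norm reappears multiplied by $Cp$, not by $C$. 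Second, the inequality you attribute to ``$L^p$ elliptic regularity'', namely $\|\nabla\phi\|_{W^{1,p}}\leq Cp\,\|\div u\|_{L^{p}}+C\,\|u\cdot n\|_{W^{1-1/p,p}}$ with a $p$-independent constant on the boundary term, is not a standard consequence of Calder\'on--Zygmund theory: the classical $W^{2,p}$ estimates produce constants that degenerate as $p\to\infty$ on \emph{all} terms, and nothing in your argument prevents the harmonic part driven by the boundary datum from picking up a $p$-growing factor as well. You single out the $O(p)$ growth on the $\div/\curl$ terms as ``the single point that requires attention'', but that is the classical, well-documented part of the estimate; the nonstandard content of the lemma is precisely that the $W^{1-1/p,p}$ boundary term and the circulation terms enter with a constant uniform in $p\geq 2$, and for this you give no argument --- it is exactly what \cite[Lemma~1]{GS-Uniq} supplies, which is why the paper contents itself with the citation. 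Note finally that for the way the lemma is used here (a fixed exponent, $p=3$, and $C^0([0,T];W^{1,p})$ regularity for each fixed $p$), the cruder inequality with $Cp$ in front of every term, which your decomposition does yield once the lifting step is repaired, would suffice; but it is not the statement of the lemma.
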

Then the following statement follows by using the classical Yudovich argument (see \cite[Lemma 2.2]{Yudo2}):
\begin{lemma}\label{ldecest}
There exists $C=C(\mathcal{Q}_\delta)>0$ such that for any $q$ in $\mathcal{Q}_\delta$, for any $u:\mathcal{F}(q)\to\mathbb{R}^2$ with $\div \, u =0$, there holds
$$\|u\|_{\textrm{log-Lip}(\mathcal{F}(q))}\leq C \left(  \|\curl u\|_{L^{\infty}(\mathcal{F}(q))} 
+ \| u \cdot n\|_{C^{1,1/2}(\partial\mathcal{F}(q))}+\sum_{\kappa=1}^N\left|\int_{\partial\mathcal{S}_\kappa(q)} u\cdot\tau \, {\rm d}s \right|\right).$$
\end{lemma}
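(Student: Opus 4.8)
The plan is to obtain this from Lemma~\ref{ldecestb} by the classical Yudovich interpolation argument (see \cite[Lemma~2.2]{Yudo2}), the only real work being to check that all constants are uniform as $q$ ranges over $\mathcal{Q}_\delta$. Fix $q$ in $\mathcal{Q}_\delta$, abbreviate
\[
A := \|\curl u\|_{L^\infty(\mathcal{F}(q))} + \|u\cdot n\|_{C^{1,1/2}(\partial\mathcal{F}(q))} + \sum_{\kappa=1}^N \Big| \int_{\partial\mathcal{S}_\kappa(q)} u\cdot\tau\,{\rm d}s \Big| ,
\]
and apply Lemma~\ref{ldecestb} to $u$ with an arbitrary exponent $p\geq 2$. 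Since $\div u=0$ the divergence term drops out; bounding $\|\curl u\|_{L^p(\mathcal{F}(q))}\leq |\mathcal{F}(q)|^{1/p}\|\curl u\|_{L^\infty(\mathcal{F}(q))}$ and $\|u\cdot n\|_{W^{1-1/p,p}(\partial\mathcal{F}(q))}\leq C\|u\cdot n\|_{C^{1,1/2}(\partial\mathcal{F}(q))}$ — the latter with a constant independent of $p\geq 2$, since the $W^{1-1/p,p}$-seminorm of a Lipschitz function on a compact $1$-manifold is controlled by its $C^{0,1}$-seminorm uniformly in $p$ — one gets
\[
\|u\|_{W^{1,p}(\mathcal{F}(q))} \leq C\,p\,A \qquad\text{for all } p\geq 2 ,
\]
with $C=C(\mathcal{Q}_\delta)$; this uses that $|\mathcal{F}(q)|$ and $|\partial\mathcal{F}(q)|$ are bounded and that the $\mathcal{F}(q)$ are uniformly regular for $q\in\mathcal{Q}_\delta$.

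The $L^\infty$-part of the log-Lipschitz norm is then immediate from the Sobolev embedding $W^{1,4}(\mathcal{F}(q))\hookrightarrow L^\infty(\mathcal{F}(q))$ with uniform constant, which gives $\|u\|_{L^\infty(\mathcal{F}(q))}\leq C A$. For the modulus of continuity I would use Morrey's inequality with its explicit exponent dependence: extending $u$ to $\mathbb{R}^2$ by a Stein-type operator with norm uniform in $q$ and $p$, one has for all $p>2$ and $x\ne y$ in $\mathcal{F}(q)$
\[
|u(x)-u(y)| \leq C\,\frac{p}{p-2}\,|x-y|^{1-2/p}\,\|\nabla u\|_{L^p(\mathcal{F}(q))} \leq C\,\frac{p^2}{p-2}\,|x-y|^{1-2/p}\,A .
\]
For $|x-y|<e^{-2}$, taking $p=2\ln(1/|x-y|)\geq 4$ makes $\frac{p}{p-2}\leq 2$ and $|x-y|^{-2/p}=e$, hence $|u(x)-u(y)|\leq C\,A\,|x-y|\ln(1/|x-y|)\leq C\,A\,|x-y|(1+\ln^{-}(|x-y|))$; for $|x-y|\geq e^{-2}$ one simply uses $|u(x)-u(y)|\leq 2\|u\|_{L^\infty(\mathcal{F}(q))}\leq C A$, which is again $\leq C\,A\,|x-y|(1+\ln^{-}(|x-y|))$ since $|x-y|$ is bounded below (the fluid domains have bounded diameter) and $1+\ln^{-}\geq 1$. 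Combining the two regimes bounds the log-Lipschitz seminorm of $u$ by $CA$, and together with the $L^\infty$ estimate this is the claim.

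The one delicate, though routine, point — the place where I expect the bookkeeping to concentrate — is the uniformity of every constant above: the Sobolev and trace embedding constants, the Stein extension norm, and the Morrey constant, all over the whole family $\{\mathcal{F}(q):q\in\mathcal{Q}_\delta\}$. This holds because, up to the $2\pi$-periodicity in the angular variables, this is a compact family of domains of a single fixed smoothness class: $\partial\Omega$ and the reference bodies $\mathcal{S}_{\kappa,0}$ are fixed and smooth, and the condition defining $\mathcal{Q}_\delta$ keeps the bodies at distance $\geq\delta$ from one another and from $\partial\Omega$, so all these fluid domains satisfy a common uniform cone (and chord-arc) condition.
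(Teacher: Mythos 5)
Your proposal is correct and follows essentially the same route as the paper, which simply invokes Lemma~\ref{ldecestb} together with the classical Yudovich argument of \cite[Lemma 2.2]{Yudo2} (the $Cp$ growth of the $W^{1,p}$ bound, Morrey's inequality with its explicit $p$-dependence, and the choice $p\sim\ln(1/|x-y|)$). Your additional bookkeeping on the uniformity of the embedding, trace and extension constants over $\mathcal{Q}_\delta$ is exactly the point the paper leaves implicit in the notation $C=C(\mathcal{Q}_\delta)$.
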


Now let $(\omega_n)_{n\geq 1}\in \mathcal{B}^{\mathbb N}$, converging to some $\omega$ in $\mathcal{B}$ with respect to  the $L^3_t(L^3_x)$ topology. We set
\begin{align*}
U_n:=&\sum_{\kappa=1}^N \nabla (\boldsymbol\varphi_{\kappa} (q,\cdot)\cdot \boldsymbol q'_{\kappa} )
+ \nabla^\perp\psi_{\omega_n,\gamma} (q,\cdot)   + \nabla  \mathcal{A}[q,\mathscr C  (q,q',q'',\gamma,\omega_n)]
,\text{ for }n\geq 1,\\
U:=&\sum_{\kappa=1}^N \nabla (\boldsymbol\varphi_{\kappa} (q,\cdot)\cdot \boldsymbol q'_{\kappa} )
+ \nabla^\perp\psi_{\omega,\gamma} (q,\cdot)   + \nabla
 \mathcal{A}[q,\mathscr C  (q,q',q'',\gamma,\omega)] ,
\end{align*}
and correspondingly $\widetilde{U}_{n}$ and $\widetilde{U}$  as above.
Using Lemma \ref{ldecestb} %and \eqref{EstLLLL},
it follows that $\widetilde{U}$ and $\widetilde{U}_n$ are uniformly bounded in $\widetilde{\text{LL}}(T) $, therefore we may define their flows $\Phi^{\widetilde{U}}$ and $\Phi^{\widetilde{U}_{n}}$, in a unique way, as in  \eqref{WX1}- \eqref{WX2}. 

Furthermore, using again Lemma \ref{ldecestb} with $p=3$ and Sobolev embeddings, the continuity of $\mathscr C$, that can be seen from \eqref{cladef},
we also get that
$\widetilde{U}_n \to \widetilde{U}$ in $L^3( (0,T) ; L^\infty ( \tilde{\mathcal F} (t) ))$, with a slight abuse of notations, 
which together with the uniform boundedness of the log-Lipschitz norms implies the uniform convergence of the flows $\Phi^{\widetilde{U}_{n}}  \to  \Phi^{\widetilde{U}}$, see for instance 
\cite[Proposition 3.9]{BCD}.
 This allows us to conclude that
$$\mathscr{F}(\omega_n) \to \mathscr{F}(\omega)\text{ in the }L^3_t(L^3_x)\text{ topology,}$$
by Lebesgue's dominated convergence theorem. For that, we first assume that $\omega_{0}$ is continuous by approximation. Then the almost everywhere convergence is obtained as follows. Given $t>0$ and $x $ in $ {\mathcal F}(t)$, there are three possibilities:\ \par \
\begin{itemize}
\item Either for all $s $ in $ [0,t]$, $\Phi(s,t,x) $ is in $ \Omega$, and then this is true for $\Phi_{n}(s,t,x) $ in $ \Omega$ if $n$ is large enough and we conclude by continuity of $\omega_{0}$;
\ \par \ \item Or for some $s $ in $ (0,t]$, $\Phi(s,t,x) $ is in $ \Sigma^-$, and then this is true for $\Phi_{n}(s'_{n},t,x) $ in $\Omega$ for $n$ large enough and some $s'_{n}\in [0,t]$, because for times $s'$ just before $s$, one has $\Phi(s',t,x) \in \overline{\widetilde{\Omega}} \setminus \Omega$,
\ \par \ \item Or $\Phi(s,t,x) $ is in $\Sigma^-$ exactly for $s=0$ or else for some $s \in (0,t]$, $\Phi(s,t,x) $ is in $\partial \Sigma^-$, but the corresponding set is negligible.
\end{itemize}
The general case when $\omega_0$ is not continuous then follows from an approximation procedure.
\subsection{Relative compactness}

Since we have a uniform bound on the log-Lipschitz norm of $u$ for $\omega\in\mathcal{B}$, it follows that  we have uniform H\"older estimates for the flow $\Phi$. We can then conclude the relative compactness of $\mathscr{F}(\mathcal{B})$ with respect to the $L^3_t(L^3_x)$ topology by using the following lemma.
\begin{lemma}
Let $C>0$, $\beta$ in $(0,1)$ and  $\omega_0$ in $ L^\infty(\mathcal{F}_0)$. Then the set
$$A(\omega_0) :=\left\{\omega_0\circ\Psi,\ \Psi\in C^\beta(\cup_{t \in (0,T)}  \, {t}  \times \mathcal{F} (q(t) );\mathcal{F}_0)\text{ measure-preserving with }\|\Psi\|_{C^\beta}\leq C \right\}$$
is relatively compact with respect to the $L^3_t(L^3_x)$ topology.
\end{lemma}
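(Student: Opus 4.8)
The plan is to reduce to the case of a continuous $\omega_0$, then apply the Arzelà--Ascoli theorem on a suitable compact space-time set, and finally pass from the uniform topology to the $L^3_t(L^3_x)$ topology, which costs nothing since the underlying set has finite measure.

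First I would record the single identity that makes the whole argument work: since each $\Psi(t,\cdot)$ is measure-preserving, $\int_{\mathcal{F}(q(t))}\phi\circ\Psi(t,\cdot)=\int_{\mathcal{F}_0}\phi$ for every $\phi\in L^1(\mathcal{F}_0)$, so that $f\mapsto f\circ\Psi(t,\cdot)$ is an isometry from $L^p(\mathcal{F}_0)$ onto $L^p(\mathcal{F}(q(t)))$ for every $p$. Consequently, if $\omega_0^\varepsilon\in C^0(\overline{\mathcal{F}_0})$ satisfies $\|\omega_0-\omega_0^\varepsilon\|_{L^3(\mathcal{F}_0)}\le\varepsilon$, then $\|\omega_0\circ\Psi-\omega_0^\varepsilon\circ\Psi\|_{L^3_t(L^3_x)}=T^{1/3}\|\omega_0-\omega_0^\varepsilon\|_{L^3(\mathcal{F}_0)}\le T^{1/3}\varepsilon$ uniformly over all admissible $\Psi$; hence $A(\omega_0)$ lies within distance $T^{1/3}\varepsilon$ of $A(\omega_0^\varepsilon)$ in $L^3_t(L^3_x)$. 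Since $L^3_t(L^3_x)$ is complete, total boundedness of $A(\omega_0^\varepsilon)$ for each $\varepsilon$ will imply total boundedness, hence relative compactness, of $A(\omega_0)$; so it suffices to prove the lemma when $\omega_0$ is continuous.

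Next I would set $\mathcal{K}:=\{(t,x):t\in[0,T],\ x\in\overline{\mathcal{F}(q(t))}\}$ and check that it is compact: it is a closed and bounded subset of $[0,T]\times\overline{\Omega}$, closedness following from the fact that along the trajectory the solids $\mathcal{S}_\kappa(q(\cdot))$ are closed, depend continuously on $t$ (indeed $q\in C^2([0,T];\mathcal{Q}_\delta)$) and keep a distance $\ge\delta$ to each other and to $\partial\Omega$. Any $\Psi$ in the family, being $C^\beta$ with Hölder constant $\le C$, is uniformly continuous on $\cup_{t}\{t\}\times\mathcal{F}(q(t))$, whose closure is $\mathcal{K}$, hence extends uniquely to a continuous map $\mathcal{K}\to\overline{\mathcal{F}_0}$ with the same Hölder bound; composing with the continuous extension of $\omega_0$ to $\overline{\mathcal{F}_0}$ (uniformly continuous, with modulus $\varpi$) yields a genuinely continuous function on $\mathcal{K}$ bounded by $\|\omega_0\|_{C^0(\overline{\mathcal{F}_0})}$ and satisfying $|\omega_0(\Psi(t,x))-\omega_0(\Psi(t',x'))|\le\varpi\big(C(|t-t'|+|x-x'|)^\beta\big)$. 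Thus $\{\omega_0\circ\Psi\}$ is uniformly bounded and equicontinuous on the compact set $\mathcal{K}$, so by Arzelà--Ascoli it is relatively compact in $C^0(\mathcal{K})$; and since $\mathcal{K}$ has finite Lebesgue measure and $\|f\|_{L^3_t(L^3_x)}=\|f\|_{L^3(\mathcal{K})}\le|\mathcal{K}|^{1/3}\|f\|_{C^0(\mathcal{K})}$ by Fubini's theorem, relative compactness in $C^0(\mathcal{K})$ forces relative compactness in $L^3_t(L^3_x)$. In particular $A(\omega_0)$ is totally bounded in $L^3_t(L^3_x)$ for continuous $\omega_0$, and combined with the reduction above this proves the lemma.

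The only step requiring a little care is the first reduction: the approximation of $\omega_0$ by a continuous function must be made uniform over the entire family $\{\omega_0\circ\Psi\}$, and this is exactly what the measure-preserving property guarantees. The rest — the compactness of the space-time cylinder $\mathcal{K}$, which uses only the uniform non-collision built into $\mathcal{Q}_\delta$, and the Arzelà--Ascoli argument — is routine.
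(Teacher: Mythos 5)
Your proof is correct and follows exactly the strategy the paper indicates (adapting Lemma 12 of the cited uniqueness paper): approximate $\omega_0$ in $L^3$ by a continuous function, note that the measure-preserving property makes this approximation uniform over the family $\{\omega_0\circ\Psi\}$, and then apply the Arzel\`a--Ascoli theorem on the compact space-time set to get total boundedness, hence relative compactness, in $L^3_t(L^3_x)$. The details you supply (compactness of the space-time cylinder, extension of $\Psi$ to the closure, passage from uniform to $L^3$ convergence) are the ones the paper leaves implicit, and they are handled correctly.
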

\begin{proof}
This is a slight adaptation of Lemma 12 from \cite{GS-Uniq}: %the proof follows in exactly the same fashion.
one proves that $A(\omega_0)$ is totally bounded by approximating $\omega_0$ by a continuous vorticity and using Ascoli's theorem.
\end{proof}

\subsection{Conclusion}

Schauder's fixed point theorem then implies that $\mathscr{F}$ has a fixed point in $\mathcal{B}$. 
Using Lemma \ref{ldecestb}, one may in fact also deduce that the associated fluid velocity field is in $C^0([0,T];W^{1,p}(\mathcal{F}(t)))$, for all $p\in [1,+\infty)$.
  %$g$ given on  $[0,T]\times\Sigma$ by  $g  =  \mathscr C  (q,q',q'',\omega, \gamma)$, where 
 %$\mathscr C$ is defined in \eqref{cladef} and $\omega$ a fixed point of $\mathscr{F}$ in $\mathcal{B}$ provide a solution to the 
 %problem \eqref{problem},
 % Then according to section \ref{sec:no2}, we obtain a solution to   the Euler equations \eqref{transp-vort}, 
 %the Newton equations \eqref{EqTrans}-\eqref{EqRot} for  $ \kappa \in \{1,2, \ldots , N \}$,  
 %the interface condition \eqref{souslab},  and the boundary conditions \eqref{Yudo1} with the previous control function $g$ and \eqref{Yudo2}, 
% the fluid velocity being deduced from $g$ and $\omega$ by  \eqref{EQ_irrotational_flow}. This 
This concludes the proof of the first part of Theorem~\ref{main}.
%
%
%
%
%%%%%%%%%%%%%%%%%%%%%%%%%%%%%%%%%%%%%%%%%%%%%%%%%%%%%%%%%%%%%%%%%%%%%%%%%%%%%%%%%%%%%%%%%%%%%%%%%%%%%%%%%%%%%%%%%%%%%%%%
%
%
%
%
%
%
%
\section{Proof of the uniqueness part of   Theorem~\ref{main}}
\label{sec:conclusion-r}
This section is devoted to the uniqueness part of   Theorem~\ref{main}. 
Let $\delta >0$. We consider the finite dimensional subspace   $\mathcal E$   of $\mathcal C$  
given by Proposition~\ref{prop-design}, as well as 
 $T>0$, $r_\omega>0$,
 a compact subset $\mathscr K$  of  $  \R^{3N}  \times   \R^{3N}   \times   \R^{N}$, and the control law 
$$
\mathscr C \in \text{Lip}( \cup_{q\in \mathcal Q_\delta} \{  q\} \times \mathscr K  \times \mathscr{B}(q,r_\omega)
; \mathcal E),
$$
given by \eqref{cladef}. 
We also consider 
a trajectory 
$q $ in $C^{2} ([0,T] ; \mathcal Q_\delta )$ and $\gamma$ in $\R^{N}$ such that 
 for any 
$t $ in $ [0,T]$,   the triple  $(q'(t),q''(t),\gamma) $ is in $ \mathscr{K}$,
 $$(\tilde{q},\tilde{u}) \in C^{2} ([0,T] ; \mathcal Q_\delta ) \times [LL(T)\cap C^0([0,T];W^{1,p}(\mathcal{F}(t)))],\ \text{for all } p\in [1,+\infty),$$ 
 and  $\tilde{\gamma}$ in $\R^{N}$ such that 
 for any 
$t $ in $ [0,T]$,   the triple  $(\tilde{q}'(t),\tilde{q}''(t),\tilde{\gamma}) $ is in $ \mathscr{K}$ and $\curl \tilde{u} (t,\cdot) $ is in $ \mathscr{B}(\tilde{q}(t),r_\omega)$.
We assume that $(\tilde{q},\tilde{u}) $ satisfies 
the Euler equations \eqref{E1}-\eqref{E2}, 
 the Newton equations \eqref{EqTrans}-\eqref{EqRot} for
$ \kappa $ in $ \{1,2, \ldots , N \}$, the interface condition \eqref{souslab}, the boundary condition \eqref{Yudo1} on the normal velocity
with  $g$ given by  \eqref{mixe}, 
the boundary condition \eqref{Yudo2} on the entering vorticity,   the circulation conditions  \eqref{eq:circs}  (with $\tilde{\gamma}$ instead of ${\gamma}$) 
and the initial conditions 
\begin{equation}
\label{doni}
   \tilde{q}(0)= q(0) \, \text{ and } \, \tilde{q}'(0) = q'(0) . 
\end{equation}

 Then it follows from the analysis performed in Section  \ref{sec:no2},   
in particular from the first equation of \eqref{problem}, applied to the solution  $(\tilde{q},\tilde{u}) $, that 
\begin{equation}
\label{sinu}
\mathfrak Q(\tilde{q}) [g]
 +\mathfrak L(\tilde{q},\tilde{q}',\tilde{\gamma},\curl \tilde{u}) [g]
= \mathfrak F (\tilde{q},\tilde{q}',\tilde{q}'',\tilde{\gamma}, \curl \tilde{u}) .
\end{equation}
Then by  \eqref{mixe}, \eqref{cladef} and  Proposition~\ref{prop-design}, we infer that $$\mathfrak Q(\tilde{q}) [g]
 +\mathfrak L(\tilde{q},\tilde{q}',\tilde{\gamma},\curl \tilde{u}) [g]=\mathfrak F (\tilde{q},\tilde{q}',q'',\tilde{\gamma},\curl \tilde{u} )$$ and therefore we arrive at 
\begin{equation}
\label{cosinu}
\mathfrak F (\tilde{q},\tilde{q}',q'',\tilde{\gamma},\curl \tilde{u})
= \mathfrak F (\tilde{q},\tilde{q}',\tilde{q}'',\tilde{\gamma},\curl \tilde{u}) .
\end{equation}

Now, thanks to \cite[Theorem 1.2]{GLMS}, the structure of the mapping $\mathfrak F $ can be made more precise. 
Indeed  there exists  definite positive $3N\times 3N$ matrices $ \mathcal{M}^{a}(q)$, depending on $q$ in $\mathcal Q $ in a $C^{\infty}$ way, and 
a $C^{\infty}$ mapping  $\tilde{\mathfrak F}$ 
such that, for any admissible $(q,q',q'',\gamma,\omega)$, the term 
$\mathfrak F(q,q',q'',\gamma,\omega) $ can be decomposed into 
\begin{equation}
\label{ga555}
\mathfrak F(q,q',q'',\gamma,\omega) 
=   \mathcal{M}^{a}(q) q'' + \widetilde{\mathfrak F} ({q},{q}',\gamma,\omega) .
\end{equation}
Then, using the decomposition  \eqref{ga555} for  both sides of 
 \eqref{cosinu}, simplifying by $\widetilde{\mathfrak F} (\tilde{q},\tilde{q}',\tilde{\gamma},\curl \tilde{u})$, using the invertibility of the matrices  $ \mathcal{M}^{a}(q)$, and integrating twice in time using the initial data \eqref{doni}, we deduce that 
$q = \tilde{q} $ on $[0,T]$ and the proof of Theorem~\ref{main} is over. 
\par
\ \par 

For sake of completeness, let us explain how the proof above can be adapted  to deal with the case where the initial positions and velocities of the rigid bodies do not match, that is if $(\tilde{q}(0),\tilde{q}'(0)) \neq (q(0),q'(0) )$, but $\tilde{q}(0)$ is sufficiently close to $q(0)$, as mentioned in the comment regarding this 
part  below the statement of Theorem~\ref{main}. If $\tilde{q}(0)$ and $q(0)$ are not close, one may first use Theorem~\ref{main} to drive $\tilde{q}$ close to $q(0)$.

In this case we replace  the control law \eqref{mixe} by  \eqref{mixe-stab} and the desired result is that  the error $q (t) - \tilde{q}(t)$ exponentially decays to $0$ as the time $t$ goes to $+\infty$. 
Indeed in this case, proceeding as above, instead of  \eqref{cosinu}, we obtain the following identity:
\begin{equation}
\label{cosinu-stab}
\mathfrak F (\tilde{q},\tilde{q}',q''+ K_P (q - \tilde{q}) + K_D   (q' - \tilde{q}'),\tilde{\gamma},\curl \tilde{u})
= \mathfrak F (\tilde{q},\tilde{q}',\tilde{q}'',\tilde{\gamma},\curl \tilde{u}) .
\end{equation}
Using the decomposition  \eqref{ga555}  and the invertibility of the matrices  $ \mathcal{M}^{a}(q)$ we deduce that the error $e:= q - \tilde{q}$ satisfies the linear differential equation
$e'' + K_P \, e + K_D \,  e'=  0 $.  
Since the matrices $K_P$ and $K_D$  are  positive definite symmetric it follows that $e(t)$ 
 exponentially decays to $0$ as the time $t$ goes to $+\infty$,  with a rate which can be made arbitrarily fast by appropriate choices of $K_P$ and $K_D$, see  \cite[Proposition 4.8]{MLS}. 
%
%\begin{equation} \label{cosinu-stab} e'' + K_P \, e + K_D \,  e'=  0 . \end{equation}
%
%%%%%%
%%%%%%

\section{Some extra comments on the issue of energy saving}
 \label{afaire}
%JEUDEMOTSDEOUF

 As mentioned in the paragraph on the energy saving in the commentary below Theorem~\ref{main}  one may wonder whether it is possible to turn on the control only when the targeted motion is not already an uncontrolled solution of the system. Since such an uncontrolled equation is characterized by the equation $\mathfrak F (q,q',q'',\gamma,\omega) =0$, see  Section~\ref{sec:no2}, this issue can be formulated as the following open problem where the targeted trajectory satisfies the uncontrolled equation at the initial time but perhaps not for positive times. 
\begin{OpenP} \label{OP}  
For any  $T>0$,  $\omega_0$ in $L^\infty ( \mathcal{F} (q (0))$, $\gamma$ in $\R^{N}$  
and $q $ in $C^{2} ([0,T] ; \mathcal Q )$ such that $\mathfrak F (q(0),q'(0),q''(0),\gamma,\omega_0) = 0$, is there a boundary control  ${g}$ in $C^{\infty} ([0,T]; \mathcal C )$ with $g(0,\cdot) = 0$ and a velocity field  $u $ in $   LL(T)\cap C^0([0,T];W^{1,p}(\mathcal{F}(t))) ,\text{ for all }p\in[1,+\infty),$ with  $\curl u (0,\cdot) = \omega_0$ such that, for $t$ in $[0,T]$,  \eqref{E1}-\eqref{E2}, 
\eqref{EqTrans}-\eqref{EqRot} for  $ \kappa $ in $ \{1,2, \ldots , N \}$,  
  \eqref{souslab},  \eqref{Yudo1},  \eqref{Yudo2} and \eqref{eq:circs} hold true.  
\end{OpenP}

Several comments are in order. \ \par \

First observe that, taking into account the decomposition \eqref{EQ_irrotational_flow}, 
 the issue stated in Open problem \ref{OP} is related to the question of whether it is possible to prescribe the initial fluid velocity rather than the initial fluid vorticity (as we actually did in the first part of Theorem~\ref{main}). 
 Since the fluid velocity in the fluid domain depends on the trace of its normal component on the boundary, it is necessary to require a compatibility condition between the initial value of the control $g$ and the initial value of the fluid velocity $u$. Indeed a positive answer to Open problem \ref{OP} would entail that for any  $T>0$,  for any log-Lipschitz vector field $u_0$ such that  $\curl u_0$ in $L^\infty ( \mathcal{F} (q (0))$, 
 \begin{gather*}
\div u_0 =0 \text{ in } \mathcal{F} (q_0), \quad u_0 \cdot n = 0  \text{ on } \partial\Omega, \quad u_0 \cdot n = \big( \theta^{\prime}_\kappa  (0) (\cdot-h_\kappa (0))^\perp + h^{ \prime}_\kappa  (0) \big) \cdot n 
\text{ on }\partial\mathcal S_\kappa(0) ,
\end{gather*}
and $q $ in $C^{2} ([0,T] ; \mathcal Q )$ such that $\mathfrak F (q(0),q'(0),q''(0),\gamma,\curl u_0) = 0$, where  
\begin{equation*} 
 \gamma:= (\gamma_\kappa )_{\kappa=1,\ldots,N}  , \quad   \text{ with }   \quad
  \gamma_\kappa = \int_{\partial\mathcal S_\kappa (0)} u_0 \cdot\tau \, {\rm d}s   , \quad  \text{ for all } \ \kappa  \in \{1,2, \ldots , N \},
\end{equation*}
 there is a boundary control  ${g}$ in $C^{\infty} ([0,T]; \mathcal C )$ with $g(0,\cdot) = 0$ and a velocity field  $u $ in $   LL(T)\cap C^0([0,T];W^{1,p}(\mathcal{F}(t))),\text{ for all } p\in[1,+\infty) ,$ with  $u (0,\cdot) = u_0$ such that, for $t$ in $[0,T]$,  \eqref{E1}-\eqref{E2}, 
\eqref{EqTrans}-\eqref{EqRot} for  $ \kappa $ in $ \{1,2, \ldots , N \}$,  
  \eqref{souslab},  \eqref{Yudo1},  \eqref{Yudo2} and \eqref{eq:circs}   hold true.  
 \ \par \

Let us also observe that if one is able to answer by a positive result to Open problem \ref{OP}, then by using the time-reversibility of the system, one can deduce the following result where the targeted trajectory is an uncontrolled solution, associated with a vanishing vorticity, at the initial and final times, that is to the result 
that for any $T>0$,   $\gamma$ in $\R^{N}$ and $q $ in $C^{2} ([0,T] ; \mathcal Q )$ such that 
$$\mathfrak F (q(0),q'(0),q''(0),\gamma,0) = 0 \,   \text{ and } \,  \mathfrak F (q(T),q'(T),q''(T),\gamma,0) = 0 ,$$ 
 there a boundary control  ${g}$ in $C^{\infty} ([0,T]; \mathcal C )$ with 
  $g(0,\cdot) =g(T,\cdot) = 0$ and a velocity field  $u $ in $LL(T)\cap C^0([0,T];W^{1,p}(\mathcal{F}(t))),\text{ for all } p\in[1,+\infty) ,$ with  $\curl u (t,\cdot) = 0$ for any
 $t$ in $[0,T]$, such that, for $t$ in $[0,T]$, 
  \eqref{E1}-\eqref{E2}, 
\eqref{EqTrans}-\eqref{EqRot} for  $ \kappa $ in $ \{1,2, \ldots , N \}$,  
  \eqref{souslab},  \eqref{Yudo1},  \eqref{Yudo2} and \eqref{eq:circs}
 hold true.  
Here we have restricted the issue to the setting of irrotational flows since it is the only case where the vorticity dynamics is under control. 
Indeed it seems difficult to reach a targeted trajectory which is at time $T>0$ an uncontrolled solution corresponding to a non vanishing given vorticity with a control vanishing at time $T$,   because the dynamics of the vorticity which remains close to the rigid bodies seems difficult to control from the external boundary. \ \par \

Inspecting the proof of Proposition~\ref{lem:geoalg} we observe that the 
mapping $R$ which is constructed there satisfies  
$R(\mathfrak{p},0) = \overline{X}_\mathfrak{p}$ (for any $\mathfrak{p}$). 
In particular since $\|\overline{X}_\mathfrak{p}\|=1$, we have
$R(\mathfrak{p},0) \neq  0$. It would be interesting to investigate alternative constructions  of similar mappings $R$ with the additional condition 
$R(\mathfrak{p},0) =  0$ since  this would entail that the corresponding mappings 
$\mathscr C (q,q',q'',\gamma,\omega)$ defined by \eqref{cladef} 
vanishes when  $\mathfrak F (q,q',q'',\gamma,\omega) =0$. 
Perhaps tools from algebraic geometry could be useful, see \cite{GMS}. \ \par \

If one looks for a control $g$ of a different form than  $g  =  \mathscr C  (q,q',q'', \gamma,\omega)$, potentially not in the set 
 $ \mathcal{C}_b (q)$, 
 one may wonder whether it is possible to take advantage of the term with  the time derivative in \eqref{ga1} 
to control the motion, with the idea to determine the control as the solution of a first order ODE in time. 
If the quadratic term does not cancel for the controls chosen in this strategy, then it is a nonlinear ODE which may lead to a blow-up in finite time. 
In our construction, because of the rigidity of harmonic functions,  it seems difficult to find controls for which the term in the parenthesis in the first term of the right hand side of \eqref{ga1} reaches arbitrary value while corresponding to a vanishing  quadratic term. 
Therefore this seems limited to the case where the targeted motion for the rigid bodies is close to an uncontrolled solution for which the right hand side of \eqref{pasproto} vanishes. However, it could be that one may start with such a control before switching to the quadratic control constructed in this paper.

\ \par
\ \par
\noindent
{\bf Acknowledgements.} The authors are partially supported by the Agence Nationale de la Recherche, Project IFSMACS, grant ANR-15-CE40-0010 
and Project SINGFLOWS grant ANR-18-CE40-0027-01. 
The last author is also partially supported by the Agence Nationale de la Recherche, Project BORDS, grant ANR-16-CE40-0027-01 and by the H2020-MSCA-ITN-2017 program, Project ConFlex, Grant ETN-765579. 
%
%
%
%
%########################################################
%
\def\cprime{$'$}

\end{document}